\renewcommand\th@plain{\slshape}
\xpatchcmd{\proof}{\itshape}{\slshape}{}{}
\newcommand{\norm}[1]{\left\lVert#1\right\rVert}
\newcommand{\ds}{\displaystyle}
\newcommand{\R}{\mathbb{R}}
\newcommand{\Z}{\mathbb{Z}}
\newcommand{\N}{\mathbb{N}}
\newcommand{\ttt}{\mathbf{t}}
\newcommand{\ASL}{${\rm ASL}_n(\mathbb{R})$}
\newcommand{\vv}{\mathbf{v}}
\newcommand{\ww}{\mathbf{w}}
\newcommand{\xx}{\mathbf{x}}
\newcommand{\yy}{\mathbf{y}}
\newcommand{\zz}{\mathbf{z}}
\newcommand{\dd}{\mathbf{d}}
\newcommand{\les}{\preccurlyeq}
\newcommand\da{Diophantine approximation}
\newcommand\ssm{\smallsetminus}
\newcommand{\dint}{\mathrm{d}}
\newcommand{\SL}{\operatorname{SL}_n(\R)}
\newcommand {\new}[1]   {\textcolor{blue}{#1}}
\newcommand {\comm}[1]   {\textcolor{red}{#1}}
\newcommand {\rmv}[1]   {\textcolor{brown}{{#1}}}
\newcommand\eq[2]{
\begin{equation}
\label{eq:#1}
{#2}
\end{equation}
}
\newcommand{\equ}[1]{\eqref{eq:#1}}
\newcommand{\ignore}[1]{}
\newcommand{\e}{\varepsilon}
\theoremstyle{plain}
\newtheorem{thm}{Theorem}[section]
\newtheorem{cor}[thm]{Corollary}
\newtheorem{rmk}[thm]{Remark}
\newtheorem{lem}[thm]{Lemma}
\newenvironment{manualtheorem}[1]{%
  \manualtheoreminner
}{\endmanualtheoreminner}
\theoremstyle{definition}
\newtheorem{defn}[thm]{Definition}
\newtheorem*{standing}{Standing Assumptions}
\theoremstyle{remark}
\numberwithin{equation}{section}
\title[Inhomogeneous Diophantine approximation for generic homogeneous functions]{Inhomogeneous Diophantine approximation \\ for generic homogeneous functions}    
\author{Dmitry Kleinbock}
\address{
\begin{itemize}
\item[] Dmitry Kleinbock
\item[] Department of Mathematics
\item[] Brandeis University 
\item[] Goldsmith Building $218$     
\item[] Waltham, MA $02454$\textendash$9110$
\item[] USA
\item[] \href{mailto:kleinboc@brandeis.edu}{{\tt kleinboc@brandeis.edu}} 
\item[] \href{https://orcid.org/0000-0002-9418-5020}{\tt https://orcid.org/0000-0002-9418-5020}\end{itemize}}
\author{Mishel Skenderi}    
\address{
\begin{itemize}
\item[] Mishel Skenderi
\item[] Department of Mathematics
\item[] The University of Utah
\item[] $155$ South $1400$ East JWB $233$
\item[] Salt Lake City, UT $84112$\textendash$0090$ 
\item[] USA
\item[] \href{mailto:mskenderi@math.utah.edu}{{\tt mskenderi@math.utah.edu}}     
\item[] \href{https://orcid.org/0000-0001-8409-1613
}{{\tt https://orcid.org/0000-0001-8409-1613
}}   \end{itemize}}     
\date{August 2022}
\thanks{The co-author Kleinbock has been supported by  NSF grant 
DMS-1900560}
\begin{document}

\begin{abstract} 
The present paper is a sequel to [Monatsh.~Math.\ {\bf 194} (2021), 523--554] in which results of that paper are generalized so that they hold in the setting of inhomogeneous Diophantine approximation. Given any integers $n \geq 2$ and $\ell \geq 1$, any ${\pmb \xi} = \left(\xi_1, \dots , \xi_\ell \right) \in \R^\ell$, and any homogeneous function \linebreak  $f = \left(f_1, \dots , f_\ell \right): \mathbb{R}^n \to \mathbb{R}^\ell$ that satisfies a certain nonsingularity assumption, we obtain a biconditional criterion on the approximating function $\psi = \left(\psi_1, \dots , \psi_\ell \right): \mathbb{R}_{\geq 0} \to \left(\mathbb{R}_{>0}\right)^\ell$ for a generic element $f \circ g$ in the $\operatorname{SL}_n(\mathbb{R})$-orbit of $f$ to be (respectively, not to be) $\psi$-approximable at ${\pmb \xi} = (\xi_1,\dots,\xi_n)$: that is, for there to exist infinitely many (respectively, only finitely many) $\mathbf{v} \in \mathbb{Z}^n$ such that $\left|\xi_j - \left( f_j \circ g\right)(\mathbf{v})\right| \leq  \psi_j(\|\mathbf{v}\|)$ for each $j \in \left\lbrace 1, \dots, \ell \right\rbrace$. In this setting, we also obtain a sufficient condition for uniform approximation. We also consider some examples of $f$ that do not satisfy our nonsingularity assumptions and prove similar results for these examples. Moreover, one can replace $\operatorname{SL}_n(\R)$ above by any closed subgroup of $\operatorname{ASL}_n(\mathbb{R})$ that satisfies certain integrability axioms (being of Siegel and Rogers type) introduced by the authors in the aforementioned previous paper.      
\end{abstract}  

\keywords{Oppenheim Conjecture, metric \da, geometry of numbers, counting lattice points, $\psi$-approximability}
\subjclass[2020]{11D75; 11J54, 11J83, 11H06}
\maketitle

\tableofcontents

\section{Introduction}\label{intro}
Let $f$ be an indefinite nondegenerate quadratic form in $n\geq 3$ real variables that is not a real multiple of a quadratic form with rational coefficients. The Oppenheim\textendash Davenport Conjecture, resolved affirmatively by Margulis \cite{Mar89}, states that every real number is an accumulation point of $f(\Z^n)$: \eq{density}{\text{For any } \xi \in \R \text{ and any } \varepsilon \in \R_{>0}, \text{ there exist infinitely many }\vv\in\Z^n\text{ for which } |f(\vv) - \xi| \leq \varepsilon.} The rich history of the Oppenheim\textendash Davenport conjecture and its seminal resolution by Margulis, among various other related topics, are extensively discussed in Margulis's survey \cite{Mar97}. The influence of Margulis's theorem and related problems continues unabated to this day. As of a few years ago, there has been a great increase of activity in proving \textit{effective} variants of Margulis's theorem for \textit{generic} quadratic forms and other homogeneous polynomials:~for instance, one often considers the $\operatorname{SL}_n(\R)$-orbit (under the natural action) of a real homogeneous polynomial in $n$ real variables; one then has a natural notion of measure class (and thus measure-theoretic genericity) for this orbit. Let us briefly recall some recent results that exemplify this circle of problems. Throughout this paper, we write $\N := \Z_{\geq 1}$:~that is to say, we do not consider $0 \in \Z$ to be a natural number.            

For any $\beta \in \R_{\geq 1}$ and any $(p, q) \in \N^2$ with $p+q = n \geq 3,$ let $\ds F_{p, q, \beta} : \R^n \to \R$ be given by \eq{generalquad}{F_{p, q, \beta}(\xx) := \sum_{j=1}^p  \left|x_j\right|^\beta - \sum_{k=p+1}^{n} \left|x_{k}\right|^\beta.} 

Generalizing earlier results of Ghosh\textendash Gorodnik\textendash Nevo \cite{GhoshGorodnikNevo} and Athreya\textendash Margulis \cite{Pol}, Kelmer\textendash Yu \cite{KY2} proved the following theorem.  

\begin{thm}[{\cite[Corollary 2]{KY2}}] \label{KY} Let $\beta \in 2\N$, let $n \in \Z_{> \beta}$, and let $p,q\in\N$ be such that $p+q = n.$ Let $s \in (0, n - \beta) \subset \R.$ Let $\xi \in \R.$ Let $\|\cdot\|$ be an arbitrary norm on $\R^n.$ Then for Haar-almost every $g \in \operatorname{SL}_n(\R)$ the following holds:~for each sufficiently large $T \in \R_{>0}$ there exists $\vv \in \Z^n$ with \[ 0 < \|\vv\| \leq T \quad \text{and} \quad \left|-\xi + F_{p, q, \beta}(g\vv) \right| \leq T^{-s}. \] \end{thm}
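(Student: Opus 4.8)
The plan is to transfer the problem to the space $X_n := \operatorname{SL}_n(\R)/\operatorname{SL}_n(\Z)$ of unimodular lattices, equipped with its $\operatorname{SL}_n(\R)$-invariant probability measure $\mu$, and then to combine a volume estimate with the first- and second-moment formulas of Siegel and Rogers, Chebyshev's inequality, and a Borel--Cantelli argument along a dyadic sequence of radii. Write $F := F_{p,q,\beta}$. Since $g \mapsto g\Z^n$ pushes Haar measure on $\operatorname{SL}_n(\R)$ to $\mu$, and since $\|\vv\| \le T$ and $\|g\vv\| \le T$ differ only by a $g$-dependent constant that we may absorb at the cost of enlarging $s$ slightly (keeping $s < n - \beta$) and of letting the threshold for ``sufficiently large $T$'' depend on $g$, it suffices to prove:
\[
\text{For }\mu\text{-a.e. }\Lambda\in X_n\text{: for all large }T\text{ there is }\ww\in\Lambda\text{ with }0<\|\ww\|\le T\text{ and }|F(\ww)-\xi|\le T^{-s}.
\]
For $T \ge 1$ set $A_T := \{\xx \in \R^n : 0 < \|\xx\| \le T,\ |F(\xx) - \xi| \le (2T)^{-s}\}$ and $N_T(\Lambda) := \card(\Lambda \cap A_T)$. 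The factor $(2T)^{-s}$ is there for interpolation: with $T_k := 2^k$, if $\Lambda$ meets $A_{T_k}$ for all large $k$, then for every $T \in [T_k, T_{k+1}]$ any $\ww \in \Lambda \cap A_{T_k}$ satisfies $\|\ww\| \le T_k \le T$ and $|F(\ww) - \xi| \le (2T_k)^{-s} = T_{k+1}^{-s} \le T^{-s}$, which is the displayed conclusion for all large $T$. Thus it is enough to prove $\sum_k \mu(\{\Lambda : N_{T_k}(\Lambda) = 0\}) < \infty$.

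The first step is the volume estimate. Writing $\xx = T\yy$ and using that $F$ is homogeneous of degree $\beta$, one has $\vol(A_T) = T^n\, \vol(\{\yy : \|\yy\| \le 1,\ |F(\yy) - c_T| \le \delta_T\})$ with $c_T := \xi T^{-\beta} \to 0$ and $\delta_T := 2^{-s}\, T^{-\beta - s} \to 0$. Since $\beta \in 2\N$, $F$ is a polynomial whose gradient vanishes only at $\mathbf 0$, and (as $p, q \ge 1$) it attains the value $0$ on a nonempty cone; the coarea formula then gives $\vol(\{\|\yy\| \le 1,\ |F(\yy) - c| \le \delta\}) \asymp \delta$ uniformly for $|c|$ and $\delta$ small. (Away from the origin this follows from a lower bound on $|\nabla F|$; near the origin one decomposes $\{\|\yy\| \le \rho_0\}$ dyadically in $\|\yy\|$, the shell of radius $\rho$ contributing $\lesssim \delta\, \rho^{\,n - \beta}$, a sum dominated by its outermost term precisely because $n > \beta$.) Hence $\vol(A_T) \asymp T^{\,n - \beta - s}$, which tends to $\infty$ since $s < n - \beta$.

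Next, Siegel's mean value theorem gives $\int_{X_n} N_T\, d\mu = \vol(A_T)$ (summing over all nonzero lattice vectors). As $n > \beta \ge 2$ forces $n \ge 3$, Rogers' second moment formula is available; since $A_T$ is a thin neighbourhood of a nondegenerate level set lying inside a large ball, and so does not concentrate near any proper rational subspace, the correction terms in Rogers' formula are $O(\vol(A_T))$, whence $\int_{X_n} N_T^2\, d\mu = \vol(A_T)^2 + O(\vol(A_T))$ and $\operatorname{Var}_\mu(N_T) \ll \vol(A_T)$. Chebyshev's inequality then yields
\[
\mu\bigl(\{\Lambda : N_T(\Lambda) = 0\}\bigr) \ \le\ \frac{\operatorname{Var}_\mu(N_T)}{\vol(A_T)^2} \ \ll\ \frac{1}{\vol(A_T)} \ \ll\ T^{-(n - \beta - s)},
\]
so $\sum_k \mu(\{N_{T_k} = 0\}) \ll \sum_k 2^{-k(n - \beta - s)} < \infty$; Borel--Cantelli together with the interpolation above completes the proof.

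The step I expect to be the main obstacle is the second moment bound. Rogers' formula comes with a family of terms indexed by the $\Z$-linear relations between two lattice vectors, and turning ``$A_T$ does not concentrate on rational subspaces'' into the bound $O(\vol(A_T))$ amounts to estimating, uniformly in $T$, the integrals $\int_{\R^n} \mathbbm{1}_{A_T}(a\xx)\, \mathbbm{1}_{A_T}(b\xx)\, d\xx$ over coprime pairs $(a, b) \in \Z^2$: the pair $(1, -1)$ gives $\vol(A_T \cap (-A_T)) = \vol(A_T)$ — antipodal pairs $\ww, -\ww$ do occur since $F$ is even — while for $|a| < |b|$ the scaling used in the volume step gives a contribution $\asymp \vol(A_T)\, |b|^{-n}$, summable over such pairs exactly because $n \ge 3$. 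A secondary technical point is the uniformity in $c$ of the volume asymptotics as $c_T = \xi T^{-\beta} \to 0$, i.e.\ ruling out that the shrinking level sets $\{F = c_T\}$ accumulate volume near the origin; this is where $\beta < n$ (equivalently $\beta < n - s$) is used in an essential way. In the general setting of the paper, these two points become respectively the Rogers-type integrability axiom on the acting group and the nonsingularity hypothesis on $f$.
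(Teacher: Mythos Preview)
Your approach is essentially the same as the paper's (and as the original proof in \cite{KY2}): a volume estimate for the target region, Siegel's first moment, a Rogers-type second moment bound, Chebyshev, Borel--Cantelli along the dyadic sequence $T_k = 2^k$, and interpolation to all $T$. The paper simply packages these four ingredients into an axiomatic framework (the $\mathcal P$-Siegel and $(\mathcal P,r)$-Rogers axioms of Definition~\ref{SiegelRogers}, and the ``uniformly acceptable'' condition of Definition~\ref{unifmeasureinvariancedefn}) and then verifies the hypotheses for $f = F_{p,q,\beta}$ via the submersion volume estimate of Theorem~\ref{submersionvol}; your direct argument recovers exactly these steps in the special case.

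One correction to your discussion of the second moment. You write that the Rogers correction terms are $O(\vol(A_T))$ because ``$A_T$ does not concentrate near any proper rational subspace,'' and flag this as the main obstacle. In fact no geometric input about $A_T$ is needed here: for $n \ge 3$ (which follows from $n > \beta \ge 2$) Rogers' theorem gives the uniform bound $\operatorname{Var}_\mu\bigl(\widehat{\mathbbm 1_E}\bigr) \ll m(E)$ for \emph{every} bounded Borel set $E$, regardless of shape; this is recorded in the paper as the $(\Z^n_{\ne 0},2)$-Rogers type of $\SL$ (Theorem~\ref{examplesSiegelRogers}(ii)). So your term-by-term analysis of $\int \mathbbm 1_{A_T}(a\xx)\mathbbm 1_{A_T}(b\xx)\,d\xx$ is unnecessary, and the step you expected to be hardest is actually immediate from the literature. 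The genuine work is the volume estimate, which you handle correctly; the paper does the same thing via restricting $F$ to the sphere and invoking the constant-rank theorem (Lemma~\ref{local} and Theorem~\ref{submersionvol}), which is equivalent to your coarea argument.
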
 \noindent  The proofs in \cite{GhoshGorodnikNevo} made use of representation theory and effective mean ergodic theorems, while those in \cite{Pol} and then \cite{KY2} employed comparatively elementary means (namely, first and second moment formulae for the Siegel transform in the geometry of numbers). For other results of a similar nature, see {\cite{EMM, MM, LM, Bourgain, GhoshKelmer, GKY_a, GKY_b,  Anishnew}}. The authors previously studied problems of this sort in \cite{KS}; the work in \cite{KS} was, however, limited to \textit{homogeneous} approximation, which corresponds to the special case $\xi = 0$ in the context of \equ{density}. The purpose of the present paper is to establish \textit{inhomogeneous} analogues of the results of \cite{KS}. In order both to recall the results of \cite{KS} and to present the results of the present paper, we proceed to establish some notation and definitions; much of the notation and many of the definitions that follow were first established in \cite{KS} and are recalled here for the convenience of the reader.

\smallskip

Now and hereafter, we shall denote by $n$ an arbitrary element of $\Z_{\geq 2}$ and by $\ell$ an arbitrary element of {$\N$}. Elements of $\R^n$ and of $\R^\ell$ shall always be regarded as column vectors, even though they may be written as row vectors for notational convenience. If $k \in \N$ 
and $E\subset \R^k,$  we define $E_{\neq 0} := E \ssm \left\lbrace\mathbf{0}_{\R^k}\right\rbrace.$ We say that 
$\vv = (v_1, \dots , v_n) \in \Z^n$ is \textsl{primitive} if $\gcd(v_1, \dots , v_n) = 1.$ We let $\Z^n_{\rm pr}$ denote the set of all primitive points of $\Z^n.$ We shall always denote the usual Lebesgue measure on any Euclidean space by $m$. Throughout this paper, we shall use the Vinogradov notation $\ll$ and use $\asymp$ to denote that both "$\ll$" and "$\gg$" hold; we shall attach subscripts to the symbols $\ll$ and $\asymp$ to indicate the parameters, if any, on which the implicit constants depend.         

\begin{defn}[{\cite[Definition 1.1]{KS}}]\label{order}
We define a non-strict partial order $\les$ on $\R^{\ell}$ as follows. For any $\ds \xx = (x_1, \dots , x_{\ell}) \in \R^{\ell}$ and any $\ds \yy = (y_1, \dots , y_{\ell}) \in \R^{\ell},$ we write $\xx \les \yy$ if and only if $x_j \leq y_j$ for each $j \in \{1, \dots , {\ell}\}$.  
\end{defn}

\begin{defn}[{\cite[Definitions 1.2, 3.3, 3.6, and 3.7]{KS}}]\label{basicapdefns}
Let \[f = (f_1, \dots , f_{\ell}) : \R^n \to \R^{\ell} \ \ \ \ \ \text{and} \ \ \ \ \  \psi = (\psi_1, \dots , \psi_{\ell}) : \R_{\geq 0} \to \left(\R_{>0}\right)^{{\ell}}\] be given, let $\nu$ be an arbitrary norm on $\R^n$, and let $\mathcal{P}$ be an arbitrary subset of $\Z^n.$ 
\smallskip
\begin{itemize}
\item  We abuse notation and write $|f|$ to denote the function $\ds (|f_1|, \dots , |f_\ell| ) :\R^n \to \R^{\ell}.$
\item  We define $\ds \mathcal{Z}(f) := f^{-1}\left( \mathbf{0}_{\R^\ell} \right)\smallsetminus\{\mathbf{0}_{\R^n} \}$.     
\item We define $\ds A_{f, \psi, \nu} := \left\{ \xx \in \R^n : |f(\xx)| \les \psi\big(\nu(\xx)\big)\right\}.$
\item For any $T \in \R_{>0}$ and any ${\pmb\varepsilon}\in \left(\R_{>0}\right)^\ell,$ we define $$B_{f, {\pmb\varepsilon}, \nu, T} := \left\{  \xx \in \R^n : |f(\xx)| \les {\pmb\varepsilon} \ \text{and} \ \nu(\xx) \leq T \right\}.$$ 
\item We  say that $f$ is $(\psi, \nu, \mathcal{P})$-\textsl{approximable} if $A_{f, \psi, \nu} \cap \mathcal{P}$ has infinite cardinality. 
\item We  say that $f$ is \textsl{uniformly} $(\psi, \nu, \mathcal{P})$-\textsl{approximable} if {$B_{f, \psi(T), \nu, T} \cap \mathcal{P}  \neq \varnothing$ for each sufficiently large $T \in \R_{> 0}$}.  
\ignore{\item Let $\ds t_\bullet = \left(t_k\right)_{k \in \N}$ be any strictly increasing sequence of elements of $\R_{>0}$ with $\ds \lim_{k \to \infty} t_k = \infty.$ We then say that $f$ is $\ds t_\bullet$-\textsl{uniformly} $\left( \psi, \nu, \mathcal{P} \right)$-approximable if {$B_{f, \psi(t_k), \nu, t_k} \cap \mathcal{P}  \neq \varnothing$ for each sufficiently large $k \in \N$}.     
\item Let $\ds t_\bullet = \left(t_k\right)_{k \in \N}$ be any strictly increasing sequence of elements of $\R_{>0}$ with $\ds \lim_{k \to \infty} t_k = \infty.$ We say that $t_\bullet$ is \textsl{quasi-geometric} if, in addition to the preceding, the set $\ds \left\lbrace t_{k+1}/t_k : k \in \N \right\rbrace$ is bounded. 
\item We say that  $f$ is \textsl{subhomogeneous} if $f$ is Borel measurable and there exists ${\delta} = {\delta}(f) \in \R_{>0}$ such that for each $t \in (0, 1) \subset \R$ and each $\xx \in \R^n$ we have $\ds |f(t\xx)| \les t^{\left({\delta}\right)} |f(\xx)|.$}       
\item We say that  $f$ is \textsl{homogeneous} if it is Borel measurable and there exists some $\dd = \dd(f) = (d_1,\dots,d_\ell) \in (\R_{>0})^\ell$ such that for each $t \in \R_{\geq 0}$, each $j \in \{1, \dots , \ell\},$ and each $\xx \in \R^n$ we have $\ds f_j(t\xx) = t^{d_j} f_j(\xx).$ We refer to $\dd = \dd(f)$ as the \textsl{degree of} $f.$\footnote{This terminology is slightly abusive because $\dd = \dd(f)$ is unique if and only if each component of $f$ is nonzero.}   
\item We say that $\psi$ is \textsl{regular} if it is Borel measurable and there exist real numbers $a = a(\psi) \in \R_{>1}$ and $b = b(\psi) \in \R_{>0}$ such that for each ${z} \in \R_{>0}$ one has $b\psi({z}) \les \psi(a {z}).$
\item We say that $\psi$ is \textsl{nonincreasing} if each component function of $\psi$ is nonincreasing in the usual sense.
\end{itemize}
\end{defn}

\begin{defn}\label{shiftedfunctiondef}
For any function $f: \R^n \to \R^\ell$ and any ${\pmb \xi} \in \R^\ell,$ define $\ds _{\pmb \xi} f: \R^n \to \R^\ell$ to be the function given by $\ds _{\pmb \xi} f(\xx) := -{\pmb \xi} + f(\xx).$ 
\end{defn}

\begin{defn}\label{logcorrectdefn}
For any 
$s \in \R_{>0}$  and any $\varepsilon \in \R_{\geq 0},$ let $\varphi_{s, \varepsilon} : \R_{\geq 0} \to \R_{>0}$ be any regular and nonincreasing function such that for every sufficiently large $t \in \R_{> 1}$ we have $\varphi_{s, \varepsilon}(t)= t^{-s} \left(\log t \right)^{\varepsilon}.$
\end{defn}

\ignore{{\begin{defn}\label{logcorrectdefn}
For any $j \in \N,$ let $\log^{{\circ j}}$ denote the $j$-fold iterated logarithm function. For any $i \in \N$, any $s \in \R_{>0}$, and any $\varepsilon \in \R_{> 0},$ let $\psi_{i, s, \varepsilon} : \R_{\geq 0} \to \R_{>0}$ be any regular and nonincreasing function such that for every sufficiently large $t \in \R_{> 1},$ we have \[ \psi_{i, s, \varepsilon}(t)= t^{-s} \cdot \left( \log^{{\circ(i+1)}}(t) \right)^{1 + \varepsilon} \cdot \prod_{j=1}^i \log^{{\circ j}}(t). \] For any $s \in \R_{>0},$ let $\psi_{s} : \R_{\geq 0} \to \R_{>0}$ be any regular and nonincreasing function such that for every $t \in \R_{> 1},$ we have $\psi_{s}(t) = t^{-s}.$ 
\end{defn}}}

Using our newly introduced terminology, we may now restate Theorem \ref{KY} as follows. (Note that statement (i) is a straightforward corollary of statement (ii).) 
\begin{manualtheorem}{1.1$'$}[{\cite[Corollary 2]{KY2}}] \label{KYredux}
Let $\beta \in 2\N$, let $n \in \Z_{> \beta}$, and let $p,q\in\N$ be such that $p+q = n.$ Let $s \in (0, n - \beta) \subset \R.$ Let $\xi \in \R.$ Let $\|\cdot\|$ be an arbitrary norm on $\R^n.$ Set $f := F_{p, q, \beta}.$ Then the following hold.
\begin{itemize}   
\item[(i)] The function $\left({_{\xi}} f\right) \circ g$ is $\left(\varphi_{s,0}, \|\cdot\|, \Z^n_{\neq 0}\right)$-approximable for Haar-almost every $g \in \SL$.   
\item[(ii)] The function $\left({_{\xi}} f\right) \circ g$ is uniformly $\left(\varphi_{s,0}, \|\cdot\|, \Z^n_{\neq 0}\right)$-approximable for Haar-almost every $g \in \SL$. 
\end{itemize}
\end{manualtheorem}

\smallskip

We now proceed to state certain results that will be formulated and proved in greater generality in \S \ref{acc}. Before we state the first result, let us recall an elementary notion from differential topology. 

\begin{defn}\label{manifoldregular} Let $M$ and $N$ be $\mathscr{C}^1$ manifolds that are Hausdorff, second-countable, and without boundary. Let $U$ be an open subset of $M.$ Let $f : M \to N$ be a function, and suppose that $f$ is continuously differentiable on $U.$ Let $x \in U$ be given. We say that $x$ is a \textsl{regular point of} $f$ if the map $\ds D_x f : T_x M \to T_{f(x)} N$ is surjective.  \end{defn}

\begin{thm}\label{simplesubmerapprox}
Let $\ds \psi = (\psi_1, \dots , \psi_{\ell}) : \R_{\geq 0} \to \left(\R_{>0}\right)^{\ell}$ be regular and nonincreasing. Let $\ds f = (f_1, \dots , f_{\ell}) 
: \R^n \to \R^{\ell}$ be homogeneous of degree  $\dd = \dd(f) = (d_1,\dots,d_\ell)\in (\R_{>0})^\ell$. Suppose further that $f$ is continuously differentiable on $\R^n_{\neq 0}$, that $\mathcal{Z}(f) \neq \varnothing$, and that 
\eq{submersion}{\text{each element of $\mathcal{Z}(f)$ is a regular point of } f.} Let $\nu$ be an arbitrary norm on $\R^n$. Let ${\pmb \xi} \in \R^\ell.$ Set $d := \sum_{j=1}^\ell d_j.$ Then the following hold. 
\begin{itemize}
\item[ \rm (i)] If $\ds \int_1^{\infty} t^{n-(d+1)} \left(\prod_{j=1}^\ell \psi_j(t) \right) \dint t$ is finite $($respectively, infinite$)$,
then $\left({_{\pmb \xi}} f\right) \circ g$ is $\left(\psi, \nu, \Z^n_{\neq 0}\right)$-approximable for Haar-almost no $($respectively, almost every$)$ $g \in \SL$. 
\item[\rm (ii)] Suppose that $d < n$ and that the infinite series $\ds \sum_{k=1}^{\infty}\left[ 2^{k(n-d)} \prod_{j=1}^\ell \psi_j\left(2^k\right) \right]^{-1}$ converges. Then $\left({_{\pmb \xi}} f\right) \circ g$ is {\sl uniformly} $\left(\psi, \nu, \Z^n_{\neq 0}\right)$-approximable for Haar-almost every $g \in \SL$.
\end{itemize}
\end{thm}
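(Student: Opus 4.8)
The plan is to translate the approximation problem into a counting problem for lattice points of the perturbed lattice $g\Z^n$ inside a family of regions, and then to apply the Siegel--Rogers type first/second moment machinery that the authors developed in \cite{KS}. Concretely, for the inhomogeneous target ${\pmb \xi}$ and approximating function $\psi$, one considers, for each dyadic scale $T = 2^k$, the "shell" region
\[
\Omega_k := \left\{ \xx \in \R^n : |{_{\pmb \xi}} f(\xx)| \les \psi\big(\nu(\xx)\big),\ 2^{k-1} < \nu(\xx) \le 2^k \right\},
\]
or rather a suitable approximation of it. Uniform $(\psi,\nu,\Z^n_{\neq 0})$-approximability of $({_{\pmb \xi}}f)\circ g$ means exactly that $g\Z^n$ meets $\bigcup_{k \le K \le k'} \Omega_K$-type sets for all large $T$; the cleanest sufficient condition is that $g\Z^n \cap \Omega_k \neq \varnothing$ for all sufficiently large $k$. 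So it suffices to show that, for almost every $g$, only finitely many of the events $\{ g\Z^n \cap \Omega_k = \varnothing\}$ occur, which by Borel--Cantelli reduces to summability of the probabilities $\mathbf{P}\big( g\Z^n \cap \Omega_k = \varnothing \big)$.

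The main input is the second moment estimate. First I would compute the expected number of lattice points: by the Siegel (first moment) formula, $\int_{\SL} \#\big( g\Z^n_{\neq 0} \cap \Omega_k \big)\, d\mu(g) \asymp m(\Omega_k)$, and a change of variables in the shell (writing $\xx = r\,\omega$ with $r = \nu(\xx)$, integrating the fiberwise volume of $\{ |{_{\pmb\xi}}f| \les \psi(r)\}$ against $r^{n-1}\,dr$) gives, using homogeneity of $f$ of degree $\dd$ and the nonsingularity assumption \equ{submersion} which guarantees the level sets of $f$ near $\mathcal Z(f)$ are smooth hypersurfaces of the expected codimension $\ell$, the asymptotic $m(\Omega_k) \asymp 2^{k(n-d)} \prod_{j=1}^\ell \psi_j(2^k)$ (here $d < n$ is used to make the radial integral behave like its top endpoint). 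Then the Rogers-type second moment formula bounds $\int_{\SL} \#\big( g\Z^n_{\neq 0}\cap\Omega_k\big)^2\, d\mu(g)$ by $m(\Omega_k)^2 + O\big(m(\Omega_k)\big)$ plus lower-order terms coming from the diagonal and from pairs $(\vv, \vv')$ with $\vv' = \pm \vv$; the shift by ${\pmb\xi}$ does not affect these formulas since they concern the lattice $g\Z^n$ itself, only the region changes. A Chebyshev/Paley--Zygmund argument then yields
\[
\mathbf{P}\big( g\Z^n_{\neq 0} \cap \Omega_k = \varnothing \big) \ll \frac{1}{m(\Omega_k)} \asymp \left[ 2^{k(n-d)} \prod_{j=1}^\ell \psi_j(2^k) \right]^{-1},
\]
and the hypothesis that $\sum_k \big[ 2^{k(n-d)} \prod_j \psi_j(2^k)\big]^{-1} < \infty$ lets Borel--Cantelli finish: for a.e.\ $g$, $g\Z^n$ meets $\Omega_k$ for all large $k$, hence $({_{\pmb\xi}}f)\circ g$ is uniformly $(\psi,\nu,\Z^n_{\neq 0})$-approximable.

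Two technical points need care, and the second is where I expect the real work to lie. The first is the volume asymptotic $m(\Omega_k) \asymp 2^{k(n-d)}\prod_j \psi_j(2^k)$: one must use regularity and monotonicity of $\psi$ to control how $\psi$ varies across the dyadic block $(2^{k-1}, 2^k]$, and use the coarea formula together with \equ{submersion} and a compactness argument on the unit sphere (so the codimension-$\ell$ level sets of $f$ have uniformly bounded geometry near $\mathcal Z(f)$) to see that the fiber volume of $\{|{_{\pmb\xi}}f(\xx)| \les {\pmb\varepsilon},\ \nu(\xx) = r\}$ is $\asymp r^{n-1-d}\prod_j \varepsilon_j$ uniformly for ${\pmb\varepsilon}$ small and $r$ large — here the shift by ${\pmb\xi}$ is harmless because for fixed large $r$ the rescaled function $r^{-\dd} f(r\,\cdot)$ has the same (shifted) level-set geometry and ${\pmb\xi}/r^{d_j} \to 0$; this is the step that genuinely requires the submersion hypothesis and is the analogue of the homogeneous computation in \cite{KS}, now with a vanishing perturbation. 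The second and harder point is that the Rogers-type second moment bound as stated in \cite{KS} is typically available only for sets satisfying mild regularity (e.g.\ boundedness away from the "cusp directions," or an $L^2$-type control on the Siegel transform); one must check that each $\Omega_k$ — which is an unbounded but "thin" neighborhood of the cone $\mathcal Z(f)$ intersected with a dyadic annulus — falls within the scope of those axioms (Siegel and Rogers type in the paper's terminology), uniformly in $k$, so that the implied constants in the second moment estimate do not degrade with $k$. Establishing this uniformity, and thereby pushing the probability bound through for every large $k$ simultaneously, is the crux; once it is in place the Borel--Cantelli conclusion and the passage from "$g\Z^n$ meets each $\Omega_k$" to uniform approximability (a matter of unwinding Definition \ref{basicapdefns}) are routine.
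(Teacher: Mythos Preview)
Your proposal treats only part~(ii); part~(i) --- the biconditional integral criterion for asymptotic $(\psi,\nu,\Z^n_{\neq 0})$-approximability --- is never addressed. This is a genuine omission: the convergence direction of (i) does not follow from (ii), and the divergence direction requires either a quantitative counting result or a zero--one argument that you have not set up. In the paper both directions of (i) go through the ``asymptotic acceptability'' abstraction (Definition~\ref{asympmeasureinvariancedefn}) together with Theorem~\ref{asymptoticmain}, which in turn rests on Theorem~\ref{genericcounting}(i)--(ii); the volume estimate feeding into this is precisely the one you sketch for (ii), so once you have it, (i) is not hard --- but it still has to be done.

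For part~(ii) your approach is essentially the paper's, executed directly rather than via the ``uniform acceptability'' framework of Theorem~\ref{mainuniformresult}. Both routes compute the volume of the relevant region by passing to the sphere, invoking the submersion hypothesis plus compactness (the paper packages this as Lemma~\ref{local} and Theorem~\ref{submersionvol}), and integrating radially; both then feed the result into Siegel--Rogers, Chebyshev, and Borel--Cantelli (Theorem~\ref{genericcounting}(iii)). Two small corrections. First, your worry that $\Omega_k$ might fall outside the scope of the Rogers-type axiom is misplaced: $\Omega_k$ sits inside a dyadic $\nu$-annulus and is therefore bounded, and Definition~\ref{SiegelRogers}(ii) applies to \emph{any} bounded Borel set with a constant independent of the set. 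Second, your $\Omega_k$ uses the constraint $|{_{\pmb\xi}}f(\xx)|\les\psi(\nu(\xx))$, which is the $A$-set shape relevant to (i); for uniform approximability the correct object is $B_{{_{\pmb\xi}}f,\psi(2^k),\nu,2^k}$, with constant threshold $\psi(2^k)$. The discrepancy and the final passage from ``hits every dyadic $B$-set'' to uniform approximability for all $T$ are handled by regularity of $\psi$ and the quasi-geometric sequence $(2^k)$, exactly as in Theorem~\ref{mainuniformresult}(ii).
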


\begin{rmk} \rm 
Notice that no component of $f$ in the above theorem is required to be a polynomial. Notice also that neither the integral criterion in (i) nor the summatory condition in (ii) features any dependence on ${\pmb \xi} \in \R^\ell.$
\end{rmk}

\begin{rmk}\label{primasl} \rm    It will follow from a more general result in \S \ref{acc} that the preceding theorem remains true if one replaces   
\begin{itemize}
\item 
each instance of $\Z^n_{\neq 0}$ by $\Z^n_{\rm pr}$; 
\vspace{0.05in}
\item 
each instance of $\Z^n_{\neq 0}$ by $\Z^n$ and each instance of $\SL$ by $\operatorname{ASL}_n(\R)$ . 
Here, $\operatorname{ASL}(\R) := \operatorname{SL}_n(\R) \ltimes \R^n,$ the group of affine bijections $\R^n \to \R^n$ that preserve both volume and orientation. 
\end{itemize}   \end{rmk}

\begin{rmk}\label{examples} \rm Let $\beta$, $p$, $q$, $n = p+q$, and $\ds F_{p, q, \beta} : \R^n \to \R$ be as in \equ{generalquad}. Suppose further that $1 < \beta < n.$ Since $\beta>1$, the function $\ds F_{p, q, \beta}$ is continuously differentiable on $\R^n.$ Since $p \geq 1$ and $q \geq 1$, we have $\mathcal{Z}\left(F_{p, q, \beta}\right) \neq \varnothing.$ It is also easy to verify that each element of $\R^n_{\neq 0}$ is a regular point of $F_{p, q, \beta}.$ The hypotheses of Theorem \ref{simplesubmerapprox} are thus satisfied when $\ell = 1$, $d = d_1 = \beta$, and $\ds f = F_{p, q, \beta}$. This shows that statement (ii) of Theorem \ref{simplesubmerapprox} implies Theorem \ref{KYredux} (equivalently, Theorem \ref{KY}). Moreover, we can prove a generalization of Theorem \ref{KYredux} as follows. Let $\xi \in \R$ be arbitrary, let $\nu$ be an arbitrary norm on $\R^n,$ and set $f = F_{p, q, \beta}$ to simplify notation. The following then hold. 
\begin{itemize}
\item The function $\ds \left({_{ \xi}} f \right) \circ g$ is $\left(\varphi_{n-\beta,0}, \nu, \Z^n_{\rm pr}\right)$-approximable for Haar-almost every $g \in \SL$. This generalizes (i) of Theorem \ref{KYredux} to include the case of the critical exponent $s = n-\beta.$
\item The function $\ds \left({_{ \xi}} f \right) \circ g$ is uniformly $\left(\varphi_{n-\beta,\varepsilon}, \nu, \Z^n_{\rm pr}\right)$-approximable for every $\varepsilon \in \R_{>1}$ and Haar-almost every $g \in \SL$. This generalizes (ii) of Theorem \ref{KYredux}. In fact, one can generalize (ii) of Theorem \ref{KYredux} even further by suitably modifying the definition of $\varphi_{n-\beta,\varepsilon}$ to include an arbitrary  finite number of iterated logarithms.  \end{itemize}  \end{rmk}     

\begin{rmk} \rm 
Our general framework also allows for vector-valued examples of $f.$ For instance, let $f_1 = F_{p, q, \beta}:\R^n \to \R$ be as in Remark \ref{examples}, and let $f_2 : \R^n \to \R$ be an $\R$-linear transformation. (These functions may remind the reader of the setting in the papers \cite{Gor, Dani, BG}.) Then $f := \left(f_1, f_2\right) : \R^n \to \R^2$ satisfies the hypotheses of Theorem \ref{simplesubmerapprox} if and only if the intersection $\mathcal{Z}(f_1) \cap \mathcal{Z}(f_2)$ is nonempty and transverse. One thereby obtains a criterion for the asymptotic approximability of and a sufficient condition for the uniform approximability of almost every element in the $\SL$-orbit of ${_{ {\pmb \xi}}} f,$ for any ${\pmb \xi} \in \R^2.$ (The corresponding conull subsets of $\SL$ depend on ${\pmb \xi} \in \R^2.$)        
\end{rmk}

\ignore{In fact, Theorem \ref{simplesubmerapprox} and Remark \ref{primasl} immediately imply the following generalization of Theorem \ref{KY}. 

\begin{cor}\label{simplesubmercor}
Let $f : \R^n \to \R$ be homogeneous of degree $d = d(f) \in \R_{>0}.$ Suppose further that $d<n$, that $f$ is continuously differentiable on $\R^n_{\neq 0}$, that $\mathcal{V}(f) \neq \varnothing$, and that each element of $\mathcal{V}(f)$ is a regular point of $f.$ Let $\nu$ be an arbitrary norm on $\R^n$. Let $\xi \in \R.$ Then for Haar-almost every $g \in \SL$, the function $\ds \left({_{ \xi}} f \right) \circ g$ is \begin{itemize}
\item[{\rm (i)}] $\left(\psi_{n-d}, \, \nu, \, \Z^n_{\rm pr}\right)$-approximable; and 
\vspace{0.05in}  
\item[{\rm (ii)}] uniformly $\left(\psi_{i, n-d, \varepsilon}, \, \nu, \, \Z^n_{\rm pr}\right)$-approximable for every $i \in \N$ and every $\varepsilon \in \R_{>0}.$ 
\end{itemize} 
\end{cor}
\begin{proof} 
The desired results follow from Theorem \ref{simplesubmerapprox}, Remark \ref{primasl}, and the integral test. \end{proof}     

\begin{rmk}\label{rmkgenquad} \rm Let us discuss a collection of examples to which Theorem \ref{simplesubmerapprox} applies. Let $\beta$, $p$, $q$, $n = p+q$, and $\ds F_{p, q, \beta} : \R^n \to \R$ be as in \equ{generalquad}. Suppose further $1 < \beta < n.$ Since $\beta>1$, it is easy to see that $\ds F_{p, q, \beta}$ is continuously differentiable on $\R^n.$ Since $p \geq 1$ and $q \geq 1$, it follows that $\mathcal{V}\left(F_{p, q, \beta}\right) \neq \varnothing.$ One then easily checks by direct calculation that each element of $\mathcal{V}\left(F_{p, q, \beta}\right)$ is a regular point of $F_{p, q, \beta}.$ The hypotheses of Theorem \ref{simplesubmerapprox} are thus satisfied when $\ell = 1$, $d = d_1 = \beta$, and $\ds f = F_{p, q, \beta}$. 
\end{rmk}}

\ignore{  
\comm{Here are my suggestions for rewriting the remark after the theorem (the text below is just a draft):}

{Note that in the above theorem $f$ does not have to be a polynomial, only a $\mathscr{C}^1$ function, but with  an extra nonsingularity condition. 
The said condition holds for $f = F_{p, q, \beta}$, so part (ii) of Theorem \ref{simplesubmerapprox} easily implies Theorem \ref{KY} \comm{(we need to explain it! for one thing, $F_{p, q, \beta}$ is not everywhere differentiable...)}  in fact, it   implies a slight improvement involving a logarithmic correction to the critical exponent case. We also note that part (i) of Theorem \ref{simplesubmerapprox} answers in the affirmative the question of asymptotic (albeit not uniform) approximability at the critical exponent of 
{Theorem \ref{KY}}.  More precisely, for any $s, \varepsilon \in \R_{\geq 0}$  let $\varphi_{s, \varepsilon} : \R_{\geq 0} \to \R_{>0}$ be any regular and nonincreasing function such that for every sufficiently large $x \in \R_{>1}$, one has $\varphi_{s, \varepsilon}(x) = x^{-s} \left(\log x \right)^{\varepsilon}.$ For $\ell=1$,    $\xi \in \R$, a norm $\nu$ on $\R^n$, $f$ as in Theorem \ref{simplesubmerapprox} with  $0 < d = d(f) < n$,  and Haar-almost every $g \in \SL$, the function $\left({_{ \xi}} f\right) \circ g$ is \begin{itemize}
\item $\left(\varphi_{n-d, 0}, \, \nu, \, \Z^n_{\neq 0}\right)$-approximable; and \item uniformly $\left(\varphi_{n-d, \varepsilon}, \, \nu, \, \Z^n_{\neq 0}\right)$-approximable for every $\varepsilon \in \R_{>1}.$ 
\end{itemize}}  }

We are also able to obtain results akin to Theorem \ref{simplesubmerapprox} in certain special cases wherein the nonsingularity condition \equ{submersion} does not hold. In Theorem \ref{simplesubmerapprox} and in all the aforementioned examples, the integral and summatory conditions obtained were all independent of ${\pmb \xi} \in \R^\ell.$ As the following two examples illustrate, this independence need no longer hold when the nonsingularity condition \equ{submersion} fails to hold.

\begin{thm}\label{simpleproduct}
Let $\psi : \R_{\geq 0} \to \R_{>0}$ be  regular and nonincreasing. Let ${\omega} \in \R_{>0}$ be arbitrary. Let $f : \R^n \to \R$ be given by $f(\xx) := \left(\prod_{i=1}^n \left| x_i \right| \right)^{\omega}.$ Let $\nu$ be an arbitrary norm on $\R^n.$ Let $\xi \in \R_{\geq 0}.$ Then the following hold.   
\vspace{0.05in}
\begin{itemize}
\item[ \rm (i)]  If \[ \begin{cases}
 { \ds \int_1^{\infty} \frac{\psi(t)^{1/{\omega}}}{t} \left(\log t\right)^{n-2} \, \dint t }  &  \text{ if } \xi = 0  \\ { \ds \int_1^{\infty} \frac{\psi(t)}{t} \left(\log t\right)^{n-2} \, \dint t }  &  \text{ if } \xi > 0 \end{cases} \] is finite $($respectively, infinite$)$,
then $\left({_{\xi}} f\right) \circ g$ is $\left(\psi, \nu, \Z^n_{\neq 0}\right)$-approximable for Haar-almost no $($respectively, almost every$)$ $g \in \SL$. \vspace{0.05in}   
\item[\rm (ii)]  If the infinite series \[ \begin{cases}
{ \ds \sum_{k=1}^{\infty}\left[ k^{n-1} \psi\left(2^k\right)^{1/{\omega}} \right]^{-1} }   &  \text{ if } \xi = 0  \\ { \ds \sum_{k=1}^{\infty}\left[ k^{n-1} \psi\left(2^k\right) \right]^{-1} }  &  \text{ if } \xi > 0 \end{cases} \]  converges, then $\left({_{\xi}} f\right) \circ g$ is {\sl uniformly} $\left(\psi, \nu, \Z^n_{\neq 0}\right)$-approximable for Haar-almost every $g \in \SL$.
\end{itemize}
\end{thm}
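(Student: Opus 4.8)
To prove Theorem~\ref{simpleproduct} the plan is to follow the dynamical strategy of \cite{KS}, as generalized in \S\ref{acc}. Writing $F := \left({_{\xi}} f\right)\circ g$ (which depends on $g$), one translates $\left(\psi,\nu,\Z^n_{\neq 0}\right)$-approximability and uniform $\left(\psi,\nu,\Z^n_{\neq 0}\right)$-approximability of $F$ into statements about the number of points of the random lattice $g\Z^n$ in a dyadically decomposed family of bounded target regions in $\R^n$, and then invokes the first-moment (Siegel-type) and second-moment (Rogers-type) axioms of \cite{KS} together with the convergent and divergent Borel--Cantelli lemmas. Since $f\geq 0$ everywhere, the set $A_{F,\psi,\nu}$ and all the sets $B_{F,\psi(T),\nu,T}$ are empty once $\psi$ drops below $|\xi|$; thus for $\xi<0$ both conclusions are vacuous, which is why only $\xi\in\R_{\geq 0}$ is treated.

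The heart of the matter is a sharp computation of the Lebesgue measures of the target regions, and it is here that the dependence on whether $\xi=0$ or $\xi>0$ enters. For $R\geq 1$ let $S_R := \left\{\xx\in\R^n : R\leq\nu(\xx)<2R\right\}$ denote a dyadic shell. One first records that $\left|{_{\xi}} f(\xx)\right|\leq\psi\big(\nu(\xx)\big)$ is equivalent, for small values of $\psi$, to $\prod_{i=1}^n|x_i|\leq\psi\big(\nu(\xx)\big)^{1/\omega}$ when $\xi=0$, and to $\big|\,\prod_{i=1}^n|x_i|-\xi^{1/\omega}\,\big|\asymp_{\xi,\omega}\psi\big(\nu(\xx)\big)$ when $\xi>0$ (because $t\mapsto t^{\omega}$ is very flat at $0$ but a local diffeomorphism near any positive value). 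Then, by the change of variables $u_i=\log|x_i|$ on each orthant, one shows that on $S_R$ the largest coordinate has size $\asymp R$ and, integrating over the remaining $n-1$ coordinates, $m\big(\left\{\xx\in S_R : \prod_i|x_i|\leq\e\right\}\big)\asymp_{n,\nu}\e\,(\log R)^{n-2}$ whenever $\e$ is at most polynomially small in $R$, while for a fixed constant $c>0$ the neighborhood $\big\{\xx\in S_R : \big|\,\prod_i|x_i|-c\,\big|\leq\e\big\}$ of the smooth hypersurface $\left\{\,\prod_i|x_i|=c\,\right\}$ has measure $\asymp_{n,c,\nu}\e\,(\log R)^{n-2}$. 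Hence $m\big(A_{F,\psi,\nu}\cap S_{2^k}\big)\asymp\psi\left(2^k\right)^{1/\omega}k^{n-2}$ when $\xi=0$ and $\asymp_{\xi,\omega}\psi\left(2^k\right)k^{n-2}$ when $\xi>0$; summing over $k$ reproduces exactly the integral criteria in (i). Fixing instead $\e=\psi(T)$ and summing the shells $S_{2^k}$ with $2^k\leq T$ gives $m\big(B_{F,\psi(T),\nu,T}\big)\asymp\psi(T)^{1/\omega}(\log T)^{n-1}$ when $\xi=0$ and $\asymp_{\xi,\omega}\psi(T)(\log T)^{n-1}$ when $\xi>0$, from which the summatory conditions in (ii) are read off.

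With these estimates the probabilistic steps are adaptations of the homogeneous case of \cite{KS}. Disintegrating the count of relevant $\vv$ according to $\nu(\vv)\in[2^k,2^{k+1})$ (on which $\psi(\nu(\vv))\asymp\psi(2^k)$ by the regularity and monotonicity of $\psi$) produces bounded regions to which the Siegel- and Rogers-type axioms apply: the number $N_k(g)$ of $\vv\in\Z^n_{\neq 0}$ with $\nu(\vv)\in[2^k,2^{k+1})$ and $\left|{_{\xi}} f(g\vv)\right|\leq\psi(\nu(\vv))$ has average $\mathbb{E}[N_k]\asymp m\big(A_{F,\psi,\nu}\cap S_{2^k}\big)$ over $g$ (with respect to Haar measure), with variance bounded by a constant multiple of that average plus lower-order terms. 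In the convergent case of (i) one has $\sum_k\mathbb{E}[N_k]<\infty$, so the convergent Borel--Cantelli lemma gives $N_k(g)=0$ for all large $k$, hence non-approximability, for almost every $g$. In the divergent case $\sum_k\mathbb{E}[N_k]=\infty$, and one needs a quasi-independence bound on the covariances $\mathbb{E}[N_jN_k]$ across distinct scales $j\neq k$, supplied by the Rogers-type axiom applied to the pairs $S_{2^j},S_{2^k}$, in order to invoke the divergent Borel--Cantelli lemma and conclude that $N_k(g)>0$ for infinitely many $k$, hence approximability, for almost every $g$. For (ii), the Rogers-type second moment applied to the bounded sets $B_{F,\psi(2^m),\nu,2^m}$ yields $\mathbb{P}\big(B_{F,\psi(2^m),\nu,2^m}\cap\Z^n_{\neq 0}=\varnothing\big)\ll m\big(B_{F,\psi(2^m),\nu,2^m}\big)^{-1}$; under the hypothesis of (ii) these reciprocals are summable, so Borel--Cantelli gives, for almost every $g$, a nonzero integer point in $B_{F,\psi(2^m),\nu,2^m}$ for all large $m$, and a routine monotonicity argument (sandwiching an arbitrary large $T$ between consecutive powers of $2$ and using that $\psi$ is nonincreasing and regular) upgrades this to uniform $\left(\psi,\nu,\Z^n_{\neq 0}\right)$-approximability.

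The main obstacle is the divergent half of (i): one must pair the delicate multiplicative-geometry volume estimate with a second-moment (quasi-independence) bound uniform enough across dyadic scales to feed the divergent Borel--Cantelli lemma, all while the ambient regions $A_{F,\psi,\nu}$ and $B_{F,\psi(T),\nu,T}$, before being intersected with a shell, are unbounded cusp neighborhoods of the coordinate hyperplanes on which the naive Siegel and Rogers integrals diverge — so the dyadic truncation has to be built into the argument from the start rather than applied afterwards. A lesser point requiring care is verifying the linearization $\big|\,(\prod_i|x_i|)^{\omega}-\xi\,\big|\leq\psi\iff\big|\,\prod_i|x_i|-\xi^{1/\omega}\,\big|\asymp_{\xi,\omega}\psi$ uniformly on each shell once $\psi$ is small, and observing separately that when $\psi$ does not tend to $0$ the integral in (i) and the series in (ii) automatically place one in the "approximable" alternative.
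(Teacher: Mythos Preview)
Your proposal is correct and, at the level of the underlying mechanism (Siegel first moment, Rogers second moment, Borel--Cantelli), coincides with what the paper does. The paper, however, packages the argument differently: rather than running the dyadic Borel--Cantelli directly, it first isolates the abstract notions of \emph{asymptotic} and \emph{uniform acceptability} (Definitions~\ref{asympmeasureinvariancedefn}--\ref{unifmeasureinvariancedefn}) and proves the black-box Theorems~\ref{asymptoticmain} and~\ref{mainuniformresult}, which in turn rest on the Schmidt-type counting result Theorem~\ref{genericcounting} from \cite{KS}; Theorem~\ref{simpleproduct} is then obtained as the $G=\SL$, $\mathcal P=\Z^n_{\neq 0}$, $r=2$ case of Theorem~\ref{productcoord}, whose proof consists solely of verifying acceptability via the volume formula of Lemma~\ref{lemmaprodcoord} (your change-of-variables sketch reproduces exactly this lemma). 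The modular route buys reusability --- the same acceptability check feeds both (i) and (ii), and the same abstract theorems serve Theorems~\ref{submerapprox} and~\ref{khintchine} as well --- while your direct route is more self-contained but repeats the probabilistic step for each example.

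One clarification on what you call the ``main obstacle'': the unboundedness of $A_{{_\xi f},\psi,\nu}$ is not the issue --- Theorem~\ref{genericcounting}(ii) already handles infinite-measure Borel sets by exhausting with norm-balls. The genuine new difficulty (relative to \cite{KS}) is that ${_\xi f}$ is not subhomogeneous when $\xi\neq 0$, so the robustness of $m(A_{{_\xi f},s\psi,\nu})$ under scaling $s$ and change of norm (which is what lets one pass between $\nu(\vv)$ and $\nu(g\vv)$) no longer comes for free from Lemma~\ref{3.1and3.5} and must be checked by hand; this is precisely what ``acceptability'' axiomatizes, and it is where the $\xi=0$ versus $\xi>0$ dichotomy enters, via the Taylor expansion you describe.
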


\smallskip

\ignore{\smallskip

\noindent Let $\mathcal{Q}$ denote the set of all $q \in \mathbb{Q}_{>0}$ for which there exist $q_1, q_2 \in \N$ such that $q_1$ is odd, $q_2$ is odd, and $q = q_1/q_2.$ We then have the following variant of the previous result.    

\begin{thm}\label{simplenobarsproduct}
Let $\psi : \R_{\geq 0} \to \R_{>0}$ be  regular and nonincreasing, and suppose that $\ds \lim_{t \to \infty} \psi(t) = 0.$ Let $q \in \mathcal{Q}$ be arbitrary. Let $f : \R^n \to \R$ be given by $f(\xx) := \left(\prod_{j=1}^n x_j \right)^q.$ Let $\nu$ be an arbitrary norm on $\R^n.$ Let $\xi \in \R.$ Then the following hold.   
\vspace{0.05in}
\begin{itemize}
\item[ \rm (i)]  If \[ \begin{cases}
 { \ds \int_1^{\infty} \frac{(\psi(t))^{1/q}}{t} \left(\log t\right)^{n-2} \, \dint t }  &  \text{ if } \xi = 0  \\ { \ds \int_1^{\infty} \frac{\psi(t)}{t} \left(\log t\right)^{n-2} \, \dint t }  &  \text{ if } \xi \neq 0 \end{cases} \] is finite $($respectively, infinite$)$,
then $\left({_{\xi}} f\right) \circ g$ is $\left(\psi, \nu, \Z^n_{\neq 0}\right)$-approximable for Haar-almost no $($respectively, almost every$)$ $g \in \SL$. \vspace{0.05in}   
\item[\rm (ii)]  If the infinite series \[ \begin{cases}
{ \ds \sum_{k=1}^{\infty}\left[ k^{n-1} \left( \psi\left(2^k\right) \right)^{1/q} \right]^{-1} }   &  \text{ if } \xi = 0  \\ { \ds \sum_{k=1}^{\infty}\left[ k^{n-1} \psi\left(2^k\right) \right]^{-1} }  &  \text{ if } \xi \neq 0 \end{cases} \]  converges, then $\left({_{\xi}} f\right) \circ g$ is {\sl uniformly} $\left(\psi, \nu, \Z^n_{\neq 0}\right)$-approximable for Haar-almost every $g \in \SL$.
\end{itemize}
\end{thm}

\noindent  Our next result is of interest because of its relation to the Khintchine\textendash Groshev
Theorem. }

\begin{thm}\label{simplemax}
Let $\psi : \R_{\geq 0} \to \R_{>0}$ be  regular and nonincreasing. Let $p \in \{1, \dots , n - 1 \}$ and $\mathbf{z} = (z_1, \dots , z_p) \in \left(\R_{>0}\right)^p$ be given. Let $f: \R^n \to \R$ be given by \[ f(x_1, \dots , x_n) := \max\left\lbrace |x_i|^{z_i}: 1 \leq i \leq p\right\rbrace. \] Set $z := \sum_{i=1}^p  z_i^{-1}.$ Let $\nu$ be an arbitrary norm on $\R^n.$ Let $\xi \in \R_{\geq 0}.$ Then the following hold.   
\vspace{0.05in}       
\begin{itemize}
\item[ \rm (i)] If \[ \begin{cases}
 { \ds \int_1^{\infty} \psi(t)^z  {t}^{n - (p+1)} \ \dint t  }  &  \text{ if } \xi = 0  \\ { \ds \int_1^{\infty} \psi(t) \, {t}^{n - (p+1)} \ \dint t  }  &  \text{ if } \xi > 0 \end{cases} \] is finite $($respectively, infinite$)$,
then $\left({_{\xi}} f\right) \circ g$ is $\left(\psi, \nu, \Z^n_{\neq 0}\right)$-approximable for Haar-almost no $($respectively, almost every$)$ $g \in \SL$. 
\vspace{0.05in}
\item[\rm (ii)] If the infinite series \[ \begin{cases}
{ \ds \sum_{k = 1}^\infty 
\left[ 2^{(n-p)k} \psi\left(2^k\right) ^z \right]^{-1}}   &  \text{ if } \xi = 0  \\ { \ds \sum_{k = 1}^\infty 
\left[ 2^{(n-p)k} \, \psi\left(2^k\right) \right]^{-1}}  &  \text{ if } \xi > 0 \end{cases} \] converges, then $\left({_{\xi}} f\right) \circ g$ is {\sl uniformly} $\left(\psi, \nu, \Z^n_{\neq 0}\right)$-approximable for Haar-almost every $g \in \SL$.
\end{itemize}
\end{thm}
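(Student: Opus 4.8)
The plan is to obtain Theorem~\ref{simplemax} by running the machinery behind \S\ref{acc} --- first and second moment estimates for Siegel transforms over $\SL/\operatorname{SL}_n(\Z)$ (under the integrability axioms, which hold here because $p \ge 1$), combined with a Borel--Cantelli argument --- the only input special to the present $f$ being the asymptotics of certain volumes. Note that $f$ is not homogeneous in the sense of Definition~\ref{basicapdefns} unless all the $z_i$ coincide, and that in all cases it violates the nonsingularity hypothesis \equ{submersion}: along $\mathcal Z(f) = \{x_1 = \dots = x_p = 0\} \cap \R^n_{\neq 0}$ the function $f = \max_{1 \le i \le p}|x_i|^{z_i}$ is not even differentiable. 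So one cannot simply quote Theorem~\ref{simplesubmerapprox}; instead one uses, in place of \equ{submersion}, the special structure of $f$: it depends only on $x_1, \dots, x_p$, and for each $r \ge 0$ the sublevel set $\{f \le r\}$ is the ``slab'' $\big(\prod_{i=1}^p [-r^{1/z_i}, r^{1/z_i}]\big) \times \R^{n-p}$, whose first-$p$-coordinate volume is $2^p \prod_{i=1}^p r^{1/z_i} = 2^p r^{z}$. The soft hypotheses required of $f$ do hold: $f$ is continuous, hence Borel, and $\mathcal Z(f) \neq \varnothing$ because $p \le n-1$.

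Since $\psi$ is regular and nonincreasing, $\psi(\nu(\xx)) \asymp_\psi \psi(2^k)$ throughout each shell $S_k := \{\xx : 2^k \le \nu(\xx) < 2^{k+1}\}$, so the question of $\left({_{\xi}} f\right) \circ g$ being (respectively, uniformly) $(\psi, \nu, \Z^n_{\neq 0})$-approximable is, up to $\psi$-dependent constants, governed by the dyadic counting functions $\mathcal N_k(g) := \#\{\vv \in \Z^n_{\neq 0} : \vv \in S_k,\ |f(g\vv) - \xi| \le \psi(2^k)\}$. Transferring the count to the lattice $g\Z^n$ and comparing $\nu(\vv)$ with $\nu(g\vv)$ --- this is where finiteness of $\int \|g\|^{n-p}\,\dint\mu(g)$, valid since $n-p < n$, is used --- the Siegel and Rogers formulae show that the first moment of $\mathcal N_k$ is $\asymp V_k$ and that its second moment is $\ll V_k^2$ plus acceptable error terms, where
\[
V_k := \vol\big(\{\xx : 2^k \le \nu(\xx) < 2^{k+1},\ |f(\xx) - \xi| \le \psi(2^k)\}\big).
\]
Standard Chebyshev and Paley--Zygmund reasoning then yields: if $\sum_k V_k = \infty$ then $\mathcal N_k(g) \to \infty$ for Haar-almost every $g$, so $\left({_{\xi}} f\right) \circ g$ is $(\psi, \nu, \Z^n_{\neq 0})$-approximable; if $\sum_k V_k < \infty$ then, by the first moment and Borel--Cantelli, $\left({_{\xi}} f\right) \circ g$ is $(\psi, \nu, \Z^n_{\neq 0})$-approximable for Haar-almost no $g$; and if $\sum_k V_k^{-1} < \infty$ then $\mathcal N_k(g) \ge 1$ for all large $k$ for Haar-almost every $g$, so $\left({_{\xi}} f\right) \circ g$ is uniformly $(\psi, \nu, \Z^n_{\neq 0})$-approximable.

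It remains to compute $V_k$. If $\xi = 0$, the constraint $|f(\xx)| \le \psi(2^k)$ reads $|x_i| \le \psi(2^k)^{1/z_i}$ for $i \le p$; since $\psi(2^k)^{1/z_i}$ is negligible against $2^k$ for large $k$ (using $p \le n-1$), the first $p$ coordinates contribute a factor $\asymp \psi(2^k)^{z}$ and $S_k$ contributes $\asymp 2^{k(n-p)}$ from the remaining ones, so $V_k \asymp_{\nu,\mathbf{z}} 2^{k(n-p)}\psi(2^k)^{z}$. If $\xi > 0$ we may assume $\psi(t) \to 0$ (otherwise $\psi$ is bounded below, the integral in (i) is automatically infinite and the series in (ii) automatically convergent, and $V_k \asymp 2^{k(n-p)}$ gives the ``Haar-almost every $g$'' conclusion directly); then for large $k$ we have $\psi(2^k) < \xi/2$, and $\{|f - \xi| \le \psi(2^k)\}$ is the set difference of the boxes $\{f \le \xi + \psi(2^k)\}$ and $\{f < \xi - \psi(2^k)\}$ in the first $p$ coordinates, whose $p$-volume is $2^p\big(\prod_{i \le p}(\xi + \psi(2^k))^{1/z_i} - \prod_{i \le p}(\xi - \psi(2^k))^{1/z_i}\big) \asymp_{\xi,\mathbf{z}} \psi(2^k)$ --- the last step being a telescoping of the product difference together with the mean value theorem applied to each factor $(\xi \pm \psi(2^k))^{1/z_i}$, which stays in a fixed compact subinterval of $(0, \infty)$. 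Hence $V_k \asymp_{\xi,\nu,\mathbf{z}} 2^{k(n-p)}\psi(2^k)$. Finally, a routine dyadic comparison (using once more that $\psi$ is regular and nonincreasing, and that $n - (p+1) \ge 0$) identifies $\sum_k 2^{k(n-p)}\psi(2^k)^{z}$ with $\int_1^\infty \psi(t)^{z}\, t^{n-(p+1)}\,\dint t$ and $\sum_k 2^{k(n-p)}\psi(2^k)$ with $\int_1^\infty \psi(t)\, t^{n-(p+1)}\,\dint t$, while $\sum_k V_k^{-1}$ is already in the displayed form; substituting the two formulas for $V_k$ into the conclusions of the previous paragraph produces exactly the integral criteria of (i) and the summatory conditions of (ii) in the two regimes $\xi = 0$ and $\xi > 0$.

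I expect the main obstacle to lie in the second-moment estimate $\mathbb{E}\big[\mathcal N_k^2\big] \ll V_k^2 + (\text{acceptable error})$, uniform in $k$. In the setting of Theorem~\ref{simplesubmerapprox} the analogous bound comes from a coarea/regularity argument that rests on \equ{submersion}; that route is unavailable here, so one must instead exploit the explicit slab geometry to bound the volumes of the intersections of $S_k \cap g^{-1}\{|f - \xi| \le \psi(2^k)\}$ with its images under elements of $\SL$, along with the ``diagonal'' contributions attached to proper rational subspaces that enter the Rogers formula. Apart from that, the only genuinely new elementary computation is the annular-box estimate $\prod_{i \le p}(\xi + r)^{1/z_i} - \prod_{i \le p}(\xi - r)^{1/z_i} \asymp_{\xi,\mathbf{z}} r$ for $0 < r < \xi/2$; the remaining work is bookkeeping the $\psi$- and $\nu$-dependent constants so that the dyadic reductions are reversible and the integral tests apply cleanly. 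As with the vector-valued examples discussed above, the conull subsets of $\SL$ obtained depend on $\xi$ --- a dependence already reflected in the two cases of the statement.
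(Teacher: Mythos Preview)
Your overall strategy---reduce to the volumes $V_k$ of dyadic slabs, then feed these into first/second-moment estimates over $\SL/\operatorname{SL}_n(\Z)$---is exactly the paper's approach. The paper obtains Theorem~\ref{simplemax} as the special case $(G,\mathcal P,r)=(\SL,\Z^n_{\neq 0},2)$ of Theorem~\ref{khintchine}, which in turn is proved by computing the slab volumes via Lemma~\ref{lemmakhintchine} (your box/annular-box calculation) and then invoking the general Theorems~\ref{asymptoticmain} and~\ref{mainuniformresult}. The paper packages your dyadic bookkeeping into the ``asymptotic/uniform acceptability'' axioms (Definitions~\ref{asympmeasureinvariancedefn}, \ref{unifmeasureinvariancedefn}) and checks them for one convenient norm via Lemma~\ref{scalingonly}; your direct shell argument is equivalent.

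Two points in your write-up should be corrected, however. First, the second-moment bound is \emph{not} the obstacle, and it has nothing to do with the geometry of $f$ or with \equ{submersion}. The $(\mathcal P,r)$-Rogers axiom (Definition~\ref{SiegelRogers}(ii)) says $\big\|\widehat{\mathbbm 1_E}-\big(\int_X\widehat{\mathbbm 1_E}\big)\mathbbm 1_X\big\|_r\le D\,m(E)^{1/r}$ for \emph{every} bounded Borel $E\subset\R^n$, with $D$ depending only on $(G,\mathcal P,r)$. Since $\SL$ is of $(\Z^n_{\neq 0},2)$-Rogers type (Theorem~\ref{examplesSiegelRogers}), the variance of your $\mathcal N_k$ is automatically $\ll V_k$, uniformly in $k$; no slab-intersection volumes or rational-subspace terms need to be estimated. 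The role of \equ{submersion} in Theorem~\ref{simplesubmerapprox} is solely to compute $V_k$, not to control moments. Second, your appeal to ``finiteness of $\int\|g\|^{n-p}\,\dint\mu(g)$'' is wrong: the Haar integral over $\SL$ of any positive power of the operator norm diverges. The paper handles the passage from $\nu(\vv)$ to $\nu(g\vv)$ by restricting to compact $K\subset G$ (where $\|g\|$ is bounded) and then using $\sigma$-compactness; see the proofs of Theorems~\ref{asymptoticmain} and~\ref{mainuniformresult}.
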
  

\ignore{  \begin{rmk} \rm 
\begin{itemize}
\item[]
\item[(i)] Let $q$ be any element of $\mathbb{Q}$ for which there exist $q_1, q_2 \in \N$ such that $q = q_1/q_2$, $\gcd(q_1, q_2) = 1$, and $\gcd(2, q_2) = 1.$ Let $f : \R^n \to \R$ be given by $f(\xx) := \left(\prod_{j=1}^n  x_j \right)^q.$

\end{itemize}

\end{rmk} }    

\begin{rmk}\label{singularprimasl} \rm
The assertions in Remark \ref{primasl} apply to Theorems \ref{simpleproduct} 
and \ref{simplemax} as well. As in Remark \ref{examples}, one may easily deduce corollaries of the preceding two theorems for $\psi$ of the form $\varphi_{s,\varepsilon}.$     \end{rmk}

\noindent Theorems \ref{simplesubmerapprox}, \ref{simpleproduct}, and \ref{simplemax} are immediate special cases of Theorems \ref{submerapprox}, \ref{productcoord}, and \ref{khintchine}; the latter three theorems are consequences (albeit not immediate ones) of Theorems \ref{asymptoticmain} and \ref{mainuniformresult}. (Likewise, the assertions in Remark \ref{primasl} are immediate consequences of Theorem \ref{submerapprox}.) Results similar to the aforementioned ones were established by the authors in \cite{KS} for the special case ${\pmb \xi } = \mathbf{0}_{\R^\ell}$: see \cite[Theorems 3.4 and 3.8]{KS}. 

\begin{rmk}\label{simplezerovsnonzero} \rm  
From the preceding two theorems, we see that the failure of $f : \R^n \to \R$ to satisfy the nonsingularity condition \equ{submersion} of Theorem \ref{simplesubmerapprox} is necessary, but not sufficient, to ensure dependence on $\xi.$ This is an interesting phenomenon that is worthy of further investigation. It would also be interesting to understand whether there exist reasonably well-behaved $f: \R^n \to \R^\ell$ that exhibit dependence on ${\pmb \xi}$ finer than the rather crude "zero or nonzero" dependence.    \end{rmk}

\ignore{\comm{Also two more suggestions:
\begin{itemize}
\item we spend so much time in the proof dealing with the ASL case, that it is strange that we never mention what it actually means. Maybe explain it somewhere?
\item Same about $\ell > 1$. Maybe have an example where $f$ is a pair of quadratic forms, or a pair (quadratic, linear)?
\end{itemize} }

\comm{MISHEL: I tried to address the first bullet point above in Remarks \ref{primasl} and \ref{singularprimasl}. Now that the submersion approximation theorem is much simpler (no restriction to the sphere), does that address the suggestion in the second bullet point? Or should we say something like the gradients should be nonzero and not scalar multiples of each other for $\ell=2.$ But I think that's pretty obvious that it's an easily checkable criterion for $\ell=2.$ Finally, I thought it might be good to put the definition of subhomogeneity between Lemmata 2.1 and 2.2, so that's what I ended up doing.  }}

\section{General results}\label{gen}

{We proceed by laying the groundwork necessary to state and prove our general results. We follow the presentation given by the authors in \cite{KS}. }

\smallskip

Let $\operatorname{ASL}_n(\R) := \operatorname{SL}_n(\R) \ltimes \R^n.$ We assume that $\operatorname{ASL}_n(\R)$ and each subgroup thereof act on $\R^n$ in the usual manner: for any $\langle h, \mathbf{z} \rangle \in \operatorname{ASL}_n(\R)$ and any $\xx \in \R^n,$ we have $\langle h, \mathbf{z} \rangle \xx = \mathbf{z} + h\xx$. Let $G$ be an arbitrary closed subgroup of $\operatorname{ASL}_n(\R)$, and let $\mathcal{P}$ be an arbitrary subset of $\Z^n.$ Define $\Gamma = \Gamma(G, \mathcal{P})$ to be the subgroup of $G$ given by \eq{gamma}{ \Gamma := \{ g \in G : g\mathcal{P} = \mathcal{P} \}.} Now and hereafter, we assume that $\Gamma$ is a lattice in $G$. (For each example of $(G, \mathcal{P})$ that we shall consider, the group $\Gamma$ will indeed be a lattice in $G.$) Set $X := G/\Gamma.$ We let $\mu_G$ denote the Haar measure on the unimodular group $G$ that is normalized so that $\operatorname{vol}\left(G/\Gamma\right) = 1.$ We then let $\mu_X$ denote the unique $G$-invariant Radon probability measure on $X.$ 

\smallskip   

We may identify $X$ and $\{g\mathcal{P} : g \in G\}$ via the bijection $g\Gamma \longleftrightarrow g\mathcal{P}$; we then equip $\{g\mathcal{P} : g \in G\}$ with the quotient topology of $X$ by declaring the aforementioned bijection to be a homeomorphism. Given any function $f : \R^n \to \R_{\geq 0},$ we define its ${{\mathcal{P}}}$-\textsl{Siegel transform} $\ds \widehat{f}^{{\,}^{\mathcal{P}}} : X \to {[0, \infty]}$ by \[ \widehat{f}^{{\,}^{\mathcal{P}}}(g\Gamma) = \widehat{f}^{{\,}^{\mathcal{P}}}(g\mathcal{P}) := \sum_{\vv \in \mathcal{P}} f(g\vv). \] We note that if $f$ is Borel measurable, then $\ds \widehat{f}^{{\,}^{\mathcal{P}}}$ is $\mu_X$-measurable. The Siegel and Rogers type axioms that are recalled below were introduced by the authors in \cite{KS}; they were named after the groundbreaking results of Siegel \cite{Siegel} and Rogers \cite{RogSet, MeanRog}. 


\begin{defn}[{\cite[Definition 2.1]{KS}}]\label{SiegelRogers} Let $G$ and ${{\mathcal{P}}}$ be as above.
\smallskip

\item[ \rm (i)] We say that $G$ is of ${{\mathcal{P}}}$-\textsl{Siegel type} if there exists a constant $c = {c}_{{{\mathcal{P}}}} \in \R_{>0}$ such that for any bounded and compactly supported Borel measurable function $f: \R^n \to \R_{\geq 0}$ we have \[ \int_{X}\widehat{f}^{{\,}^{{\mathcal{P}}}} \, \dint\mu_X = c \int_{\R^n}f \, \dint m.\] 

\smallskip

\item[ \rm (ii)]  
Let $\ds r \in \R_{\geq 1}$ be given. We say that $G$ is of $\ds \left({{\mathcal{P}}}, r \right)$-\textsl{Rogers type} if there exists a constant $D = D_{\mathcal{P}, r} \in \R_{>0}$ such that for any bounded Borel $E \subset \R^n$ with $m(E) > 0$ we have \[\norm{\widehat{\mathbbm{1}_E}^{{}_{{\mathcal{P}}}} - \left(\int_{X}\widehat{\mathbbm{1}_E}^{{}_{{\mathcal{P}}}} \,d\mu_X \right) \mathbbm{1}_X}_{r} \leq D \cdot 
m(E)^{1/r}. \]  \end{defn}

\smallskip

There are various interesting examples of $(G, \mathcal{P})$ that satisfy both conditions (i) (Siegel) and (ii) (Rogers) of Definition \ref{SiegelRogers}. For the convenience of the reader, we now record some examples that were already discussed in \cite{KS}:~see \cite[Theorems 2.5, 2.6, and 2.8]{KS} and the references therein for details. Below, $\zeta$ denotes the Euler\textendash Riemann zeta function.

\begin{thm}[{\cite[Theorems 2.5, 2.6, and 2.8]{KS}}]\label{examplesSiegelRogers}
\begin{itemize}
\item[]
\item[\rm (i)] The group $\operatorname{ASL}_n(\R)$ is of $\Z^n$-Siegel type with $\ds {c}_{\Z^n} = 1$ and of $\left(\Z^n, 2\right)$-Rogers type.  
\item[\rm (ii)] Suppose $n \geq 3.$ Then the group $\operatorname{SL}_n(\R)$ is of $\Z_{\mathrm{pr}}^n$-Siegel type with $\ds {c}_{ \Z_{\mathrm{pr}}^n} = 1/\zeta(n)$,  of ${\Z^n_{\ne0}}$-Siegel type with $\ds {c}_{{\Z^n_{\ne0}}} = 1,$ of $\ds \left(\Z_{\mathrm{pr}}^n, 2\right)$-Rogers type, and of $(\Z^n_{\ne0},2)$-Rogers type.
\item[(iii)] The group $\operatorname{SL}_2(\R)$ is of $\Z_{\mathrm{pr}}^2$-Siegel type with $\ds {c}_{ \Z_{\mathrm{pr}}^2} = 1/\zeta(2)$, of ${\Z^2_{\ne0}}$-Siegel type with $\ds {c}_{{\Z^2_{\ne0}}} = 1$, and of $\ds \left(\Z_{\mathrm{pr}}^2, 2\right)$-Rogers type. For every $p \in [1, 2) \subset \R,$ the group $\operatorname{SL}_2(\R)$ is of $(\Z^2_{\ne0},p)$-Rogers type.
\item[\rm (iv)] Suppose $\ds n \in 2\left(\Z_{\geq 2}\right).$ Then the group $\operatorname{Sp}_n(\R)$ is of $\Z_{\mathrm{pr}}^n$-Siegel type with $\ds {c}_{ \Z_{\mathrm{pr}}^n} = 1/\zeta(n)$, of $\Z^n_{\ne0}$-Siegel type with $\ds {c}_{\Z^n_{\ne0}} = 1,$ of $\ds \left(\Z_{\mathrm{pr}}^n, 2\right)$-Rogers type, and of $(\Z^n_{\ne0},2)$-Rogers type. \end{itemize}
\end{thm}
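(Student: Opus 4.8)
The plan is to recognize every clause of the theorem as an instance of one of two classical averaging principles on a homogeneous space of lattices: a first-moment (Siegel--Weil) identity, which yields the Siegel axiom, and a second-moment (Rogers) estimate, which yields the $(\mathcal{P},r)$-Rogers axiom. I would first dispatch the Siegel-type assertions. For $(\operatorname{SL}_n(\R),\Z^n_{\ne0})$ with $n\geq2$ this is Siegel's classical mean value theorem \cite{Siegel}: with $\vol(G/\Gamma)=1$ it reads $\int_X\widehat{f}^{\,\Z^n_{\ne0}}\,d\mu_X=\int_{\R^n}f\,dm$, giving $c_{\Z^n_{\ne0}}=1$. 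Möbius inversion --- writing $\vv=k\uu$ with $k\in\N$ and $\uu\in\Z^n_{\mathrm{pr}}$, so that $\widehat{f}^{\,\Z^n_{\ne0}}=\sum_{k\geq1}\widehat{f(k\,\cdot)}^{\,\Z^n_{\mathrm{pr}}}$, and summing $\sum_{k\geq1}k^{-n}=\zeta(n)$ --- then gives $c_{\Z^n_{\mathrm{pr}}}=1/\zeta(n)$. For $(\operatorname{ASL}_n(\R),\Z^n)$ one has $\Gamma=\operatorname{SL}_n(\Z)\ltimes\Z^n$, and writing $g=\langle h,\mathbf{z}\rangle$, the sum $\widehat{f}^{\,\Z^n}(g)=\sum_{\vv\in\Z^n}f(h\vv+\mathbf{z})$ unfolds over $\mathbf{z}$ in a fundamental domain for $\R^n/h\Z^n$ (of volume $1$, since $\det h=1$) to $\int_{\R^n}f\,dm$, independently of $h$; hence $c_{\Z^n}=1$. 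The symplectic statements follow the same template, with Siegel's identity replaced by its analogue for the space $\operatorname{Sp}_n(\R)/\operatorname{Sp}_n(\Z)$ of unimodular symplectic lattices ($n$ even), followed again by the $\zeta(n)$ correction for the primitive case.

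Turning to the Rogers-type assertions, I would observe that, after expanding the square, Definition~\ref{SiegelRogers}(ii) with $r=2$ amounts to a uniform second-moment bound $\int_X(\widehat{\mathbbm{1}_E})^2\,d\mu_X-\big(\int_X\widehat{\mathbbm{1}_E}\,d\mu_X\big)^2\leq D^2\,m(E)$ over all bounded Borel $E$ with $m(E)>0$. I would derive this from Rogers' second moment formula \cite{RogSet,MeanRog}, which expresses $\int_X(\widehat{\mathbbm{1}_E})^2\,d\mu_X$ as the square of the mean plus a correction term indexed by pairs of lattice points lying on a common line through the origin, and then bound that correction by $O(m(E))$ using the first-moment identity applied to an auxiliary function: for $\Z^n_{\ne0}$ the correction reduces (via the primitive Siegel formula) to $\tfrac1{\zeta(n)}\int_{\R^n}\big(\sum_{k\ne0}\mathbbm{1}_E(kx)\big)^2\,dx$, which is finite and $O_n(m(E))$ precisely when $n\geq3$; for $\Z^n_{\mathrm{pr}}$ only the $\pm$ pairs survive, so the correction is $O(m(E))$ in every dimension; and for $\operatorname{ASL}_n(\R)$ a clean unfolding together with Siegel's formula gives the exact identity $\int_X(\widehat{\mathbbm{1}_E})^2\,d\mu_X=m(E)+m(E)^2$ in all dimensions. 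The symplectic cases run identically using the symplectic mean value theorem. This accounts for every $(\mathcal{P},2)$-Rogers clause. The sole case in which $r=2$ fails is $(\operatorname{SL}_2(\R),\Z^2_{\ne0})$: there the second moment diverges, since in dimension two a cusp excursion produces $\asymp\lambda$ collinear lattice vectors in $E$ on a set of measure $\asymp\lambda^{-2}$, so $\widehat{\mathbbm{1}_E}\in L^p(X)$ only for $p<2$; for each such $p$ I would estimate $\norm{\widehat{\mathbbm{1}_E}-\big(\int_X\widehat{\mathbbm{1}_E}\,d\mu_X\big)\mathbbm{1}_X}_p$ directly from this tail bound (or by interpolating between the $L^1$ estimate and the failed $L^2$ one) to obtain the $(\Z^2_{\ne0},p)$-Rogers bound.

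The step I expect to be the main obstacle is this second-moment analysis: establishing convergence of Rogers' correction term and extracting from it a clean $O(m(E))$ bound that is \emph{uniform} over all bounded Borel $E$, pinning down the sharp integrability exponent of the Siegel transform in the rank-one case (which is exactly what forces $p<2$ there rather than $p=2$), and proving the symplectic analogues of both the Siegel and the Rogers formulas. In the present paper, however, none of this needs to be redone: the theorem is quoted verbatim from \cite[Theorems 2.5, 2.6, and 2.8]{KS}, so it suffices to cite those results together with the classical sources (Siegel \cite{Siegel}, Rogers \cite{RogSet,MeanRog}) assembled there.
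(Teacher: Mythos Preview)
Your proposal is correct and, in its final paragraph, takes exactly the same approach as the paper: the theorem is stated without proof and attributed to \cite[Theorems 2.5, 2.6, and 2.8]{KS}, so the paper simply cites those results and the classical sources of Siegel and Rogers. Your additional sketch of the underlying first- and second-moment arguments is accurate and more detailed than anything the paper provides, but it is not needed here.
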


\noindent  Using the recent work of Ghosh\textendash Kelmer\textendash Yu \cite{GKY_a}, one can improve upon Theorem \ref{examplesSiegelRogers}(ii) in an arithmetically interesting fashion. For any $q \in \N,$ let $\Z^n_{\rm pr}(q)$ denote the set of all $\vv \in \Z^n_{\rm pr}$ that are congruent modulo $q$ to the first standard basis vector $\mathbf{e}_1 := (1, 0, \dots , 0) \in \Z^n \subset \R^n$:~more precisely, we have $\vv \in \Z^n_{\rm pr}(q)$ if and only if $\vv \in \Z^n_{\rm pr}$ and $\vv - \mathbf{e}_1 \in q\Z^n.$ For any $q \in \N,$ let $\zeta_q$ denote the modified Euler\textendash Riemann zeta function given by $\ds \zeta_q(s) := \sum_{k \in \N  :  \gcd(k, q)=1} k^{-s}.$ (Note that $\Z^n_{\rm pr}(1) = \Z^n_{\rm pr}$ and $\zeta_1 = \zeta.$) The following then holds.

\begin{thm}[{\cite[Theorem 1.4]{GKY_a}}]\label{congruenceprimitive}
Suppose $n \geq 3.$ Let $q \in \N$ be arbitrary.  Then the group $\operatorname{SL}_n(\R)$ is of $\Z_{\mathrm{pr}}^n(q)$-Siegel type with $\ds {c}_{ \Z_{\mathrm{pr}}^n(q)} = \frac1{q^n \, \zeta_q(n)}$ and of $\ds \left(\Z_{\mathrm{pr}}^n(q), 2\right)$-Rogers type. 
\end{thm}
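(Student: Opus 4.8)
The plan is to reduce both claims to the already-settled case $q=1$ (Theorem~\ref{examplesSiegelRogers}(ii)) by working along the finite covering $\pi\colon X_q:=\operatorname{SL}_n(\R)/\Gamma_q\to X_1:=\operatorname{SL}_n(\R)/\operatorname{SL}_n(\Z)$, where $\Gamma_q:=\Gamma\bigl(\operatorname{SL}_n(\R),\Z^n_{\mathrm{pr}}(q)\bigr)$. First I would identify $\Gamma_q$ explicitly. Any $g$ preserving $\Z^n_{\mathrm{pr}}(q)$ preserves $\Z^n_{\mathrm{pr}}$, hence lies in $\operatorname{SL}_n(\Z)$; applying it to $\mathbf{e}_1\in\Z^n_{\mathrm{pr}}(q)$ forces $g\mathbf{e}_1\equiv\mathbf{e}_1\pmod q$, and conversely any $g\in\operatorname{SL}_n(\Z)$ with this property sends $\mathbf{e}_1+q\Z^n$ into itself and preserves primitivity, so $\Gamma_q=\{g\in\operatorname{SL}_n(\Z):g\mathbf{e}_1\equiv\mathbf{e}_1\pmod q\}$. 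Since $\operatorname{SL}_n(\Z)\to\operatorname{SL}_n(\Z/q)$ is onto and $\operatorname{SL}_n(\Z/q)$ acts transitively on the unimodular vectors of $(\Z/q)^n$ (those whose coordinates generate the unit ideal), the index $N_q:=[\operatorname{SL}_n(\Z):\Gamma_q]$ equals the number of such vectors, which by the Chinese Remainder Theorem is $q^n\prod_{p\mid q}(1-p^{-n})=q^n\,\zeta_q(n)/\zeta(n)$; in particular $\Gamma_q$ is a lattice (of finite index in $\operatorname{SL}_n(\Z)$) and $\pi$ has degree $N_q$. Two orbit observations complete this step: $\Z^n_{\mathrm{pr}}(q)=\Gamma_q\mathbf{e}_1$ is a single $\Gamma_q$-orbit, and, writing $\operatorname{SL}_n(\Z)=\bigsqcup_{i=1}^{N_q}\gamma_i\Gamma_q$, one has $\Z^n_{\mathrm{pr}}=\operatorname{SL}_n(\Z)\mathbf{e}_1=\bigsqcup_{i=1}^{N_q}\gamma_i\Z^n_{\mathrm{pr}}(q)$, the union being disjoint because the $\operatorname{SL}_n(\Z)$-stabilizer of $\mathbf{e}_1$ is contained in $\Gamma_q$.

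For the Siegel property, the last partition yields, for every bounded compactly supported Borel $f\geq 0$, the pointwise identity $\widehat{f}^{\,\Z^n_{\mathrm{pr}}}(g\operatorname{SL}_n(\Z))=\sum_{i=1}^{N_q}\widehat{f}^{\,\Z^n_{\mathrm{pr}}(q)}(g\gamma_i\Gamma_q)$; that is, the sum of $\widehat{f}^{\,\Z^n_{\mathrm{pr}}(q)}$ over a $\pi$-fibre recovers $\widehat{f}^{\,\Z^n_{\mathrm{pr}}}$ on the base. Because $\pi$ is $\operatorname{SL}_n(\R)$-equivariant and both quotients carry the invariant probability measure, $\pi_*\mu_{X_q}=\mu_{X_1}$ and $\int_{X_1}\bigl(\sum_{y\in\pi^{-1}(x)}F(y)\bigr)\,\dint\mu_{X_1}(x)=N_q\int_{X_q}F\,\dint\mu_{X_q}$ for nonnegative $F$. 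Taking $F=\widehat{f}^{\,\Z^n_{\mathrm{pr}}(q)}$ and using $\int_{X_1}\widehat{f}^{\,\Z^n_{\mathrm{pr}}}\,\dint\mu_{X_1}=\zeta(n)^{-1}\int_{\R^n}f\,\dint m$ gives $\int_{X_q}\widehat{f}^{\,\Z^n_{\mathrm{pr}}(q)}\,\dint\mu_{X_q}=\frac{1}{N_q\,\zeta(n)}\int_{\R^n}f\,\dint m=\frac{1}{q^n\,\zeta_q(n)}\int_{\R^n}f\,\dint m$, which is exactly the Siegel identity with the stated constant $c_{\Z^n_{\mathrm{pr}}(q)}$.

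For the $\bigl(\Z^n_{\mathrm{pr}}(q),2\bigr)$-Rogers property, note that (the measure being a probability measure and the Siegel step having identified the mean as $c_{\Z^n_{\mathrm{pr}}(q)}m(E)$) the claim is equivalent to the variance bound $\int_{X_q}\bigl(\widehat{\mathbbm{1}_E}^{\,\Z^n_{\mathrm{pr}}(q)}\bigr)^2\,\dint\mu_{X_q}-c_{\Z^n_{\mathrm{pr}}(q)}^2\,m(E)^2\leq D^2 m(E)$ for some $D$ depending only on $n$ and $q$. Expanding the square and grouping the double sum over $\vv,\ww\in\Z^n_{\mathrm{pr}}(q)$ according to the rank of $\{\vv,\ww\}$: the diagonal $\vv=\ww$ contributes $\widehat{\mathbbm{1}_E}^{\,\Z^n_{\mathrm{pr}}(q)}$, of integral $c_{\Z^n_{\mathrm{pr}}(q)}m(E)$; the remaining rank-one pairs satisfy $\ww=-\vv$, which can occur only when $q\mid 2$ and then contributes $O(m(E))$; and for the linearly independent pairs one uses $n\geq 3$, so that $\operatorname{SL}_n(\R)$ has tractable orbit structure on pairs of vectors in $\R^n$, to obtain a Rogers-type second-moment formula on $X_q$ whose leading term is $c_{\Z^n_{\mathrm{pr}}(q)}^2\,m(E)^2$ (cancelling the subtracted quantity) with remainder $O(m(E))$. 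I expect this last ingredient to be the main obstacle: establishing the second-moment (Rogers-type) formula for the congruence set $\Z^n_{\mathrm{pr}}(q)$, with the correct main term and an $O(m(E))$ error, is the genuine content of \cite[Theorem 1.4]{GKY_a}, whereas the index count and the Siegel unfolding above are essentially formal once the classical ($q=1$) mean-value identity is taken as known.
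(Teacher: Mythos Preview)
Your argument is essentially correct, but it takes a different route from the paper for the Siegel part. The paper simply quotes the mean and second-moment formulas from \cite[Theorem 1.4, (1.9) and (1.10)]{GKY_a} together with \cite[Lemma 2.2]{GKY_a}, and then performs two lines of algebra (plus Cauchy--Schwarz) to extract the variance bound. In contrast, you \emph{derive} the Siegel identity from the classical $q=1$ case by unfolding along the finite cover $X_q\to X_1$, computing the index $N_q=[\operatorname{SL}_n(\Z):\Gamma_q]=q^n\zeta_q(n)/\zeta(n)$ via the count of unimodular vectors in $(\Z/q)^n$. This is a pleasant and more self-contained argument; it buys independence from the specific formulation in \cite{GKY_a} for the first-moment statement, at the cost of the orbit/index bookkeeping.

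One small point to tighten: the implication ``$g$ preserves $\Z^n_{\mathrm{pr}}(q)\Rightarrow g$ preserves $\Z^n_{\mathrm{pr}}$'' is not immediate as stated. What you actually need is $g\in\operatorname{SL}_n(\Z)$, and the clean way to see this is that pairwise differences of elements of $\Z^n_{\mathrm{pr}}(q)$ generate $q\Z^n$ (e.g.\ $(e_1+qe_j)-e_1=qe_j$ for $j\ge 2$, and $((1{+}q)e_1+qe_2)-(e_1+qe_2)=qe_1$), so $g$ preserves $q\Z^n$, hence $\Z^n$. After that your identification of $\Gamma_q$ and the rest of the unfolding go through.

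For the Rogers part you correctly identify that the substantive input is the second-moment formula for $\Z^n_{\mathrm{pr}}(q)$ from \cite{GKY_a}; this matches the paper, which invokes exactly that result rather than reproving it.
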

\begin{proof} Set $\mathcal{P} := \Z_{\mathrm{pr}}^n(q).$ Let $f: \R^n \to \R_{\geq 0}$ be a bounded and compactly supported Borel measurable function. It follows from \cite[Theorem 1.4, (1.9)]{GKY_a} and \cite[Lemma 2.2]{GKY_a} that  
\begin{equation}\label{meanofcongruence}
\int_{X}\widehat{f}^{{\,}^{{\mathcal{P}}}} \, \dint\mu_X = \frac{\zeta(n)}{q^n \, \zeta_q(n)} \cdot \frac{1}{\zeta(n)} \int_{\R^n}f \, \dint m = \frac{1}{q^n \, \zeta_q(n)} \int_{\R^n}f \, \dint m. 
\end{equation} This proves the first assertion. 

Using \cite[Theorem 1.4, (1.10)]{GKY_a}, \cite[Lemma 2.2]{GKY_a}, and the Cauchy\textendash Schwarz inequality, we obtain \begin{equation}\label{secondcongruence}
\frac{q^n \, \zeta_q(n)}{\zeta(n)} \int_{X} \left(\widehat{f}^{{\,}^{{\mathcal{P}}}} \right)^2 \, \dint\mu_X \leq \frac{1}{\zeta(n) \, q^n \, \zeta_q(n)} \left(\int_{\R^n}f \, \dint m\right)^2 + \frac{2}{\zeta(n)} \, \norm{f}_2^2. 
\end{equation} We then infer from \eqref{meanofcongruence} and \eqref{secondcongruence} that 
\begin{equation*}
\int_{X} \left(\widehat{f}^{{\,}^{{\mathcal{P}}}} \right)^2 \, \dint\mu_X - \left(\int_{X}\widehat{f}^{{\,}^{{\mathcal{P}}}} \, \dint\mu_X \right)^2 \leq \frac{2}{q^n \, \zeta_q(n)} \, \|f\|_2^2. \end{equation*} The second assertion now follows easily.  \end{proof}

\begin{rmk} \rm  
The same proof applies to show that for any $q \in \N,$ the group $\operatorname{SL}_2(\R)$ is of $\Z_{\mathrm{pr}}^2(q)$-Siegel type with $\ds {c}_{ \Z_{\mathrm{pr}}^2(q)} = \frac1{q^2 \, \zeta_q(2)}$:~see \cite[Remark 1.11]{GKY_a}.            
\end{rmk}

The Siegel and Rogers axioms are expedient because they can be used in tandem with the Borel\textendash Cantelli Lemma (for the purpose of uniform approximation) and to prove analogues of W.\,M.~Schmidt's famous counting result \cite[Theorems 1 and 2]{Schmidt} (for the purpose of asymptotic approximation). We now recall a result from \cite{KS} that records the relevant consequences of the Siegel and Rogers axioms.  

\begin{thm}[{\cite[Theorem 2.9]{KS}}]\label{genericcounting} 
Let $G$ be a closed subgroup of \ASL, and let ${{\mathcal{P}}}$ be a subset of $\Z^n.$ Suppose $G$ is of $\mathcal{P}$-Siegel type with $c=c_{\mathcal{P}}$. Let $E$ be a Borel measurable subset of $\R^n.$ 
\smallskip  
\begin{itemize}
\item[\rm (i)] If $m(E) < \infty,$ then $\ds \mu_X\left(\left\lbrace \Lambda \in X: \#\left(\Lambda \cap E\right) < \infty \right\rbrace\right) = 1.$  
\end{itemize}
\smallskip
For the remaining statements of this theorem, suppose in addition to the preceding hypotheses that we are given $r \in \R_{> 1}$ for which $G$ is of $\ds \left(\mathcal{P}, r \right)$-Rogers type. 
\smallskip
\begin{itemize}
\item[\rm (ii)] Suppose $m(E) = \infty$. Let $\|\cdot\|$ be a norm on $\R^n$. Then for $\mu_X$-almost every $\Lambda \in X,$ we have  \eq{limit}{\lim_{t \to \infty } \frac{\# \left\lbrace\xx\in\left(\Lambda \cap E\right) : \|\xx\| \leq t \right\rbrace}{c\, m\left(\{\xx\in  E: \|\xx\| \leq t \}\right)} = 1.} 
\smallskip
In particular,  $\ds \mu_X\left(\left\lbrace \Lambda \in X: \#\left(\Lambda \cap E\right) = \infty \right\rbrace\right) = 1.$   
\smallskip  
\item[\rm (iii)] Let $\ds\{F_k\}_{k \in \N}$ be a collection of 
Borel measurable subsets of $\R^n$ with $0 < m(F_k) < \infty$ for each $k \in \N$. Suppose $\ds \sum_{k=1}^{\infty} m(F_k)^{1-r} < \infty.$ Then the following holds:~for $\mu_X$-almost every $\Lambda \in X$ there exists  $k_\Lambda \in \N$ such that $\ds \Lambda \cap F_k \neq \varnothing$ whenever $k \geq k_\Lambda$.  
\end{itemize}           
\end{thm}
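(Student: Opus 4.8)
The plan is to deduce all three statements from the Siegel and Rogers axioms by a standard averaging argument combined with the Borel--Cantelli lemma. For statement (i), observe that $\widehat{\mathbbm{1}_E}^{{}_{\mathcal P}}(\Lambda) = \#(\Lambda \cap E)$ for $\Lambda$ identified with an element of $\{g\mathcal P : g \in G\}$, so the $\mathcal P$-Siegel property applied to $\mathbbm{1}_E$ (after a routine truncation to reduce to bounded, compactly supported functions and monotone convergence) gives $\int_X \widehat{\mathbbm{1}_E}^{{}_{\mathcal P}}\,\dint\mu_X = c\,m(E) < \infty$. An integrable nonnegative function is finite almost everywhere, which is exactly (i).

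For statement (ii), the idea is to run a Schmidt-type variance argument along a dyadic (or more generally geometric) sequence of times and then interpolate. First I would set $E_t := \{\xx \in E : \|\xx\| \le t\}$, note $m(E_t) \uparrow \infty$, and pick times $t_1 < t_2 < \cdots \to \infty$ with $c\,m(E_{t_k}) \asymp k^2$ (possible since $m(E_t)$ is nondecreasing, right-continuous-ish, and unbounded); the point of the quadratic growth is to make $\sum_k m(E_{t_k})^{1-r}$ manageable when $r = 2$, and more care is needed for general $r > 1$ where one instead chooses $c\,m(E_{t_k}) \asymp k^{2/(r-1)}$ or similar. By the $(\mathcal P, r)$-Rogers bound applied to $\mathbbm{1}_{E_{t_k}}$, the function $\widehat{\mathbbm{1}_{E_{t_k}}}^{{}_{\mathcal P}}$ concentrates near its mean $c\,m(E_{t_k})$ with $L^r$-error $O(m(E_{t_k})^{1/r})$; Chebyshev plus Borel--Cantelli along $k$ then yields $\#(\Lambda \cap E_{t_k}) = c\,m(E_{t_k})(1 + o(1))$ for $\mu_X$-a.e.\ $\Lambda$. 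Finally, for arbitrary $t$ I would sandwich $t_k \le t < t_{k+1}$ and use monotonicity of $\#(\Lambda \cap E_t)$ in $t$ together with $m(E_{t_{k+1}})/m(E_{t_k}) \to 1$ (which follows from the chosen polynomial growth rate) to upgrade convergence along $\{t_k\}$ to the full limit \equ{limit}. The ``in particular'' clause is immediate since the limit being $1$ with $m(E_t) \to \infty$ forces $\#(\Lambda \cap E) = \infty$.

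For statement (iii), this is a direct Borel--Cantelli argument: applying the $(\mathcal P, r)$-Rogers inequality to each $\mathbbm{1}_{F_k}$ and using Chebyshev's inequality in $L^r$, the set $\{\Lambda : \Lambda \cap F_k = \varnothing\}$ is contained in $\{\Lambda : |\widehat{\mathbbm{1}_{F_k}}^{{}_{\mathcal P}}(\Lambda) - c\,m(F_k)| \ge c\,m(F_k)\}$, whose measure is at most $(D/c)^r m(F_k)^{1-r}$. Since $\sum_k m(F_k)^{1-r} < \infty$, the first Borel--Cantelli lemma shows that $\mu_X$-a.e.\ $\Lambda$ lies in only finitely many of these bad sets, i.e.\ $\Lambda \cap F_k \neq \varnothing$ for all $k \ge k_\Lambda$.

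The main obstacle is the interpolation step in (ii): choosing the geometric sequence $\{t_k\}$ so that simultaneously (a) the Borel--Cantelli series $\sum_k m(E_{t_k})^{1-r}$ converges, and (b) consecutive ratios $m(E_{t_{k+1}})/m(E_{t_k})$ tend to $1$ fast enough that the error in passing from discrete times to continuous $t$ is negligible. For $r = 2$ the choice $m(E_{t_k}) \asymp k^{1+\delta}$ works comfortably, but one must be slightly careful that $m(E_t)$ can have jumps (it need not be continuous), so the times $t_k$ should be defined via $t_k := \inf\{t : c\,m(E_t) \ge k^{1+\delta}\}$ and one checks the resulting two-sided control on $m(E_{t_k})$; the remaining verifications are routine.
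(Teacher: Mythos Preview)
The paper does not actually prove Theorem \ref{genericcounting}; it is quoted verbatim from \cite[Theorem 2.9]{KS} and no argument is given here. Your sketch is nonetheless a correct outline of the standard proof, and indeed matches the argument in \cite{KS}: part (i) is exactly the observation that the Siegel identity forces $\widehat{\mathbbm{1}_E}^{{}_{\mathcal P}}\in L^1(X)$; part (iii) is precisely the Chebyshev--in--$L^r$ plus first Borel--Cantelli argument you describe; and part (ii) is the Schmidt-type variance argument along a well-chosen subsequence of radii followed by a monotonicity sandwich, which is what \cite{KS} carries out (following \cite{Schmidt}). Your caveat about jumps in $t\mapsto m(E_t)$ is well taken and is handled in \cite{KS} essentially as you suggest, by defining the $t_k$ via level sets of $m(E_t)$ rather than trying to hit prescribed values exactly.
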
  

\begin{rmk}[{\cite[Remark 2.12]{KS}}] \rm Let $G$ be a closed subgroup of $\operatorname{ASL}_n(\R)$, and let $\mathcal{P}$ be a subset of $\Z^n.$ If $G$ is of $\mathcal{P}$-Siegel type, then the triangle inequality in $L^1(X)$ implies that $G$ is of $(\mathcal{P}, 1)$-Rogers type. Notice, however, that the hypotheses of Theorem \ref{genericcounting}(iii) can never be satisfied when $r \in \R_{>1}$ is replaced by $1.$ We now give an example to show that Theorem \ref{genericcounting}(ii) need not be true when $r \in \R_{>1}$ is replaced by $1.$ Let $G := \R^n.$ Then $G$ is a closed subgroup of $\operatorname{ASL}_n(\R)$ that is of $\Z^n$-Siegel type with $c_{\Z^n} = 1$; thus, $G$ is of $(\Z^n, 1)$-Rogers type. Let $\varepsilon \in (0, 1) \subset \R$ be given, and define $\ds U_\varepsilon := \R^{n-1} \times \left( \frac{1-\varepsilon}{2}, \frac{1+\varepsilon}{2}\right) \subseteq \R^{n-1} \times (0, 1) \subset \R^n.$ Since $n \geq 2$, we have $\ds m\left(U_\varepsilon\right) = \infty.$ Now note that $\ds \mu_X\left(\left\lbrace \Lambda \in X: \#\left(\Lambda \cap U_\varepsilon\right) = \infty \right\rbrace\right) = 1-\varepsilon < 1.$  \end{rmk}

\smallskip

{In the authors' previous paper \cite{KS}, the assumption on the function $f: \R^n \to \R^\ell$ whose values near zero were being approximated was \textit{subhomogeneity}; we recall here the definition of this property. }  
\begin{defn}\label{subhom} Let $f: \R^n \to \R^\ell.$ We say that $f$ is \textsl{subhomogeneous} if it is Borel measurable and there exists ${\delta} = {\delta}(f) \in \R_{>0}$ such that for each $t \in (0, 1) \subset \R$ and each $\xx \in \R^n$ we have $\ds |f(t\xx)| \les t^{{\delta}} |f(\xx)|.$ \end{defn}  
\noindent In this paper, we wish to prove analogues of \cite[Theorems 3.4 and 3.8]{KS} in which the subhomogeneous function $f : \R^n \to \R^\ell$ therein is replaced by a function of the form $_{\pmb \xi} F,$ where $F : \R^n \to \R^\ell$ is some sufficiently well-behaved Borel measurable function and ${\pmb \xi} \in \R^\ell$. The technical starting point for doing so is noting that the subhomogeneity assumption on $f$ in \cite{KS} was needed only in order to invoke the conclusions of two important lemmata from \cite{KS}; we now recall the statements of these lemmata for the convenience of the reader. 
\begin{lem}[{\cite[Lemmata 3.1 and 3.5]{KS}}]\label{3.1and3.5}
Let $\ds f 
: \R^n \to \R^{\ell}$ be subhomogeneous, and let $\ds {\delta} = {\delta}_f \in \R_{>0}$ as in Definition \ref{subhom}. Let $\eta$ and $\nu$ be arbitrary norms on $\R^n$. The following then hold. 
\begin{itemize} 
\item[\rm (i)] Let $s \in \R_{>0}$. Then $m(A_{f, s\psi, \eta}) < \infty$ if and only if $m(A_{f, \psi, \nu}) < \infty$.
\item[\rm (ii)] Let  $t \in (0, 1),$ $T \in \R_{>0},$ and ${\pmb\varepsilon} \in \left(\R_{>0}\right)^\ell.$ Then $\ds {t} B_{f, {\pmb\varepsilon}, \nu, T} \subseteq B_{f, {t}^{{\delta}}{\pmb\varepsilon}, \nu, {t} T}$.   
\item[\rm (iii)] {There exists $\ds C^* = C_{\nu, \eta}^* \in \R_{\geq 1}$ such that for each $\ds C \in \R_{\geq {C^*}},$ each $T \in \R_{>0},$ and each ${\pmb\varepsilon} \in \left(\R_{>0}\right)^\ell,$ we have $\ds B_{f, {\pmb\varepsilon}, \nu, T} \subseteq C \, B_{f, C^{-{\delta}}{\pmb\varepsilon}, \eta, T}$ and $\ds B_{f, {\pmb\varepsilon}, \eta, T} \subseteq C \, B_{f, C^{-{\delta}}{\pmb\varepsilon}, \nu, T} $. } 
\end{itemize}
\end{lem}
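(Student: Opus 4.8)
The plan is to obtain all three parts from one mechanism: the subhomogeneity inequality $|f(t\xx)|\les t^{\delta}|f(\xx)|$ for $t\in(0,1)$ trades ``contract the input by $t$'' for ``contract the output bound by the definite factor $t^{\delta}$'', and this is to be played against the fact that Lebesgue measure scales by $\lambda^{n}$ under the dilation $\xx\mapsto\lambda\xx$ and that all norms on $\R^{n}$ are equivalent. Parts (ii) and (iii) then reduce to one-line verifications, while part (i) is obtained by chaining two instances of the same argument.

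For (ii): given $\xx\in B_{f,{\pmb\varepsilon},\nu,T}$ and $t\in(0,1)$, one has $|f(t\xx)|\les t^{\delta}|f(\xx)|\les t^{\delta}{\pmb\varepsilon}$ and $\nu(t\xx)=t\nu(\xx)\leq tT$, so $t\xx\in B_{f,t^{\delta}{\pmb\varepsilon},\nu,tT}$, which is the asserted inclusion. For (iii): fix $c_{1},c_{2}\in\R_{\geq 1}$ with $\eta\leq c_{1}\nu$ and $\nu\leq c_{2}\eta$ and set $C^{\ast}:=1+\max\{c_{1},c_{2}\}$. For $C\geq C^{\ast}$ and $\xx\in B_{f,{\pmb\varepsilon},\nu,T}$, since $C^{-1}\in(0,1)$ subhomogeneity gives $|f(C^{-1}\xx)|\les C^{-\delta}|f(\xx)|\les C^{-\delta}{\pmb\varepsilon}$, while $\eta(C^{-1}\xx)=C^{-1}\eta(\xx)\leq (c_{1}/C)\nu(\xx)\leq (c_{1}/C)T\leq T$; hence $C^{-1}\xx\in B_{f,C^{-\delta}{\pmb\varepsilon},\eta,T}$, i.e.\ $\xx\in C\,B_{f,C^{-\delta}{\pmb\varepsilon},\eta,T}$. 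The second inclusion follows by exchanging $\nu$ and $\eta$ (and $c_{1}$ and $c_{2}$).

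For (i) I would treat the scalar and the norm separately. Scalar independence: for $s\geq 1$ one has $A_{f,\psi,\nu}\subseteq A_{f,s\psi,\nu}$ since $\psi(z)\les s\psi(z)$ for every $z\in\R_{\geq 0}$; conversely, with $\lambda:=s^{1/\delta}\geq 1$, for $\xx\in A_{f,s\psi,\nu}$ one computes $|f(\lambda^{-1}\xx)|\les\lambda^{-\delta}|f(\xx)|\les\lambda^{-\delta}s\,\psi(\nu(\xx))=\psi(\nu(\xx))\les\psi(\lambda^{-1}\nu(\xx))=\psi(\nu(\lambda^{-1}\xx))$, where the last $\les$ uses that $\psi$ is nonincreasing and $\lambda^{-1}\nu(\xx)\leq\nu(\xx)$; thus $A_{f,s\psi,\nu}\subseteq\lambda A_{f,\psi,\nu}$, whence $m(A_{f,s\psi,\nu})\leq\lambda^{n}m(A_{f,\psi,\nu})$ (the case $s=1$ is trivial and the case $0<s<1$ reduces to the case $s>1$ applied to the function $s\psi$). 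Norm independence: with $c_{2}$ as above and $\lambda:=c_{2}^{-1}\in(0,1]$, for $\xx\in A_{f,\psi,\eta}$ one has $|f(\lambda\xx)|\les\lambda^{\delta}|f(\xx)|\les\psi(\eta(\xx))\les\psi(\nu(\lambda\xx))$, because $\nu(\lambda\xx)=c_{2}^{-1}\nu(\xx)\leq\eta(\xx)$ and $\psi$ is nonincreasing; so $\lambda A_{f,\psi,\eta}\subseteq A_{f,\psi,\nu}$, giving $m(A_{f,\psi,\eta})<\infty$ whenever $m(A_{f,\psi,\nu})<\infty$, and the reverse by symmetry. Chaining the two equivalences yields $m(A_{f,s\psi,\eta})<\infty\iff m(A_{f,\psi,\eta})<\infty\iff m(A_{f,\psi,\nu})<\infty$.

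I expect no genuine obstacle; the only thing demanding care is the bookkeeping in (i) --- keeping the loss $\lambda^{\pm\delta}\leq 1$ on the correct side of $\les$ and applying the monotonicity of $\psi$ in the direction in which the norm has been contracted --- together with the harmless degenerate case $\lambda=1$, in which subhomogeneity is an equality and the claimed inclusion is an identity, so that the strict restriction $t\in(0,1)$ in Definition \ref{subhom} is not needed. The essential content is merely that subhomogeneity promotes ``small input'' to ``output small by a fixed power'', which is exactly what permits trading a change of norm or of scalar for a controlled dilation of Lebesgue measure.
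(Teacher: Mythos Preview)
Your argument is correct in all three parts; the only implicit hypothesis you use in (i), that $\psi$ is nonincreasing, is indeed part of the standing assumptions under which the lemma is applied. The paper does not supply its own proof of this lemma but merely cites it from \cite{KS}, so there is nothing to compare against here; your direct argument---subhomogeneity to trade input-contraction for output-contraction by a definite power, equivalence of norms, and scaling of Lebesgue measure---is the expected one.
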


\noindent In essence, the above lemma shows that the subhomogeneity of $f$ is sufficient to ensure that the volumes of the sets $A_{f, \psi, \nu}$ and $B_{f, {\pmb\varepsilon}, \nu, T}$ are well-behaved under change-of-norms and under scaling of any of the arguments $\psi, {\pmb\varepsilon}, T$ by arbitrary elements of $\R_{>0}.$  Let us also recall another lemma from \cite{KS} to which we shall need to refer.

\begin{lem}[{\cite[Lemma 3.2]{KS}}] \label{KSLemma3.2}
Let $\ds \psi = \left(\psi_1, \dots , \psi_{\ell}\right): \R_{\geq 0} \to \left(\R_{>0}\right)^{\ell}$ be regular and nonincreasing. Then the following holds: for any $c \in \R_{\geq 0}$ there exists $s \in \R_{>0}$ such that for each $x \in [0, c]$ and each $y \in \R_{>c},$ one has $\ds \psi(y-x) \les s\psi(y).$
\end{lem}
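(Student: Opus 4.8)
The plan is to prove Lemma~\ref{KSLemma3.2} componentwise, reducing it to a statement about a single regular nonincreasing function $\psi_j : \R_{\geq 0} \to \R_{>0}$: namely, that for any $c \in \R_{\geq 0}$ there is $s_j \in \R_{>0}$ with $\psi_j(y-x) \leq s_j \psi_j(y)$ for all $x \in [0,c]$ and $y > c$. Once this is established for each $j \in \{1,\dots,\ell\}$, one takes $s := \max_{1 \leq j \leq \ell} s_j$, and the desired inequality $\psi(y-x) \les s\psi(y)$ follows from the definition of $\les$ in Definition~\ref{order}, since scaling by $s \geq s_j$ only weakens each componentwise bound (here using $\psi_j > 0$).

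For the scalar statement, first I would dispose of the trivial case $c = 0$ (take $s_j = 1$). For $c > 0$, let $a = a(\psi_j) > 1$ and $b = b(\psi_j) > 0$ be the regularity constants from Definition~\ref{basicapdefns}, so that $b\psi_j(z) \leq \psi_j(az)$ for all $z > 0$; iterating, $b^k \psi_j(z) \leq \psi_j(a^k z)$ for every $k \in \N$. The idea is that for $x \in [0,c]$ and $y > c$ we have $y - x \leq y$, so $\psi_j(y-x) \leq \psi_j(y)$ trivially when $y - x \leq y$ is combined with monotonicity — wait, monotonicity gives $\psi_j(y-x) \geq \psi_j(y)$ since $y - x \leq y$. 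So the content is really an upper bound for $\psi_j(y-x)$ in terms of $\psi_j(y)$, and we must control how much $\psi_j$ can grow as its argument shrinks from $y$ down to $y-x$. Choose $k \in \N$ large enough that $a^k c \geq 2c$ — in fact we want to absorb the shrinkage. Concretely: pick $k \in \N$ with $a^{-k} \leq 1/2$, equivalently $a^k \geq 2$. Hmm, let me instead argue as follows. Fix $k$ with $a^k > 2$. For $y > c$ and $x \in [0,c]$ we claim $\psi_j(y - x) \leq b^{-k}\psi_j\big(a^{-k}(y-x)\big)$ when we run the regularity inequality backwards; but regularity as stated only lets us go from smaller to larger arguments. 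So instead write $y - x = a^{-k} \cdot a^k(y-x)$: this does not directly help either.

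Let me reorganize the key step. The natural approach: by regularity, $\psi_j\big(a^k(y-x)\big) \geq b^k \psi_j(y-x)$, hence $\psi_j(y-x) \leq b^{-k}\psi_j\big(a^k(y-x)\big)$. Now I want to choose $k$ so that $a^k(y-x) \geq y$ for \emph{all} admissible $x,y$; then monotonicity (nonincreasing) gives $\psi_j\big(a^k(y-x)\big) \leq \psi_j(y)$, and we conclude with $s_j := b^{-k}$. To arrange $a^k(y-x) \geq y$ for all $y > c$ and $x \in [0,c]$: since $x \leq c < y$, we have $y - x > 0$, and writing $y - x \geq y - c$, it suffices that $a^k(y - c) \geq y$, i.e. $(a^k - 1) y \geq a^k c$, i.e. $y \geq \frac{a^k c}{a^k - 1} = \frac{c}{1 - a^{-k}}$. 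This is \emph{not} valid for all $y > c$ (the right side exceeds $c$), so a single value of $k$ will not cover $y$ close to $c$. The fix is to split into two regimes. For $y \geq 2c$: then $y - x \geq y - c \geq y/2$, so a \emph{fixed} $k$ with $a^k \geq 2$ gives $a^k(y-x) \geq y$, handling this regime with $s_j' = b^{-k}$. For $c < y < 2c$: here both $y$ and $y - x$ lie in a compact-ish range; more precisely $y - x \in (0, 2c)$ and we need an upper bound on $\psi_j(y-x)$, but $y - x$ can approach $0$, where $\psi_j$ need not be bounded. However, observe $y - x > y - c$ could still be small. To handle it, note $\psi_j(y-x) \leq b^{-m}\psi_j\big(a^m(y-x)\big)$ for any $m$; pick $m$ depending only on $c$ with $a^m \cdot (\text{something}) \geq 2c$ — but $y - x$ has no positive lower bound, so this still fails. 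I expect this regime $y$ near $c$ to be \textbf{the main obstacle}, and I suspect the cleanest resolution is that the lemma should be read with $y$ ranging over $\R_{>c}$ but the bound $s$ allowed to depend on $c$ only — and indeed the uniform control near $y = c^+$ genuinely requires that $\psi_j(y-x)$ with $y - x \to 0^+$ be comparable to $\psi_j(y)$ with $y \to c^+$; since both arguments go to $0$... no, $y \to c^+$ keeps $y$ bounded away from $0$. So in this regime $\psi_j(y) \geq \psi_j(2c) > 0$ is bounded below, while $\psi_j(y-x)$ is bounded above by... nothing a priori. The resolution must be to \emph{also} split the $x$-range: if $x \leq y/2$ then $y - x \geq y/2 \geq c/2$, handle as before with the lower bound $\psi_j(y-x) \leq \psi_j(c/2)$ and $\psi_j(y) \geq \psi_j(2c)$, giving $s_j'' = \psi_j(c/2)/\psi_j(2c)$; if $x > y/2$, then... $y - x < y/2 < c$, and we still need $\psi_j(y-x) \leq s_j \psi_j(y) \geq s_j \psi_j(2c)$, requiring $\psi_j$ bounded on $(0, c)$, which regularity does give: iterating regularity, $\psi_j(z) \leq b^{-k}\psi_j(a^k z)$ and for $z \in (0,c)$ choosing $k = k(z)$ with $a^k z \in [c, ac)$ gives $\psi_j(z) \leq b^{-k}\psi_j(c) \leq (\text{bound depending on } k)$ — but $k \to \infty$ as $z \to 0$, so if $b < 1$ then $b^{-k} \to \infty$. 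Thus if $b < 1$ the lemma as I am trying to prove it is \emph{false}, which means I have mis-stated or misunderstood: re-examining, I believe the intended use has $y - x$ and $y$ both large (the lemma is applied with $y = \nu(\xx)$ large), and the honest statement is the one written; its proof therefore must be: \emph{given} $c$, produce $s$; and the production splits as $y \geq 2c$ (fixed $k$, $s = b^{-k}$) versus $c < y \leq 2c$, where crucially $x \in [0,c]$ forces $y - x \in [y - c, y] \subseteq [0, 2c]$ but more importantly $y-x \geq y - c$ can be small only when $x$ near $c$ and $y$ near $c$. I will present the $y \geq 2c$ argument cleanly, and for $c < y \leq 2c$ observe $y - x \geq y - c > 0$ and apply regularity-iteration to the \emph{ratio}: there exists $k_0 = k_0(a,b,c)$ such that for all $w \in (0, 2c]$, $\psi_j(w) \leq b^{-k_0}\psi_j(2c)/\big(\text{inf over the finitely-many scales}\big)$ — I will lean on the fact, provable from regularity, that $\psi_j$ restricted to any interval $[\alpha, \beta] \subset \R_{>0}$ is bounded above (cover by finitely many scaled copies of a base interval), while $\psi_j(y) \geq \psi_j(2c) > 0$, and take $s_j$ to be the ratio of these. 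I would present this last sub-step as "by regularity, $\psi_j$ is bounded on every compact subset of $\R_{>0}$, and $y - x$ stays in such a set when $c < y \leq 2c$ only after noting $y - x \geq$ (some positive quantity)" — if $y - x$ can truly hit $0$ the claim needs $x$ bounded away from $y$, which holds once we also invoke $x \leq c$: then $y - x \geq y - c$, and for $y$ in the \emph{closed} interval $[c + \epsilon, 2c]$ this is bounded below, while the sliver $y \in (c, c+\epsilon]$ is absorbed by shrinking — ultimately I am confident the intended proof is short and I would write: \textbf{(1)} reduce to one component; \textbf{(2)} for $y \geq 2c$ use iterated regularity with a fixed $k$ (depending only on $a$) with $a^k \geq 2$, so $a^k(y-x) \geq a^k(y/2) \geq y$ and monotonicity closes it with $s = b^{-k}$; \textbf{(3)} for $c < y < 2c$ use that iterated regularity makes $\psi_j$ locally bounded above while $\psi_j(y) \geq \psi_j(2c) > 0$, bounding the ratio by a constant depending only on $c$ (and $a,b$); \textbf{(4)} take the max of the finitely many constants over components and regimes.

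I expect step (3) — the behaviour of $\psi_j(y-x)/\psi_j(y)$ for $y$ just above $c$, where $y-x$ can be near $0$ — to be the main obstacle, since it is exactly where regularity must be used in its "iterate to reach a fixed scale, then use monotonicity" form rather than being a one-line monotonicity argument; all other cases reduce to monotonicity plus a single application of the regularity inequality.
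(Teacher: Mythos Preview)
Your overall strategy (componentwise reduction, then handle $y \geq 2c$ via iterated regularity and $c < y < 2c$ separately) is sound, and step (2) is exactly right. But your step (3) has a gap that you yourself identify and then fail to close.

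You correctly observe that for $c < y < 2c$ and $x \in [0,c]$ the argument $y-x$ can approach $0$, so ``$\psi_j$ is locally bounded on compact subsets of $\R_{>0}$'' does not apply. Your attempt to patch this with iterated regularity fails for exactly the reason you notice: if $b < 1$, then $b^{-k} \to \infty$ as $k \to \infty$, and you cannot get a uniform upper bound on $\psi_j$ near $0$ that way. The fix you are missing is much simpler than anything involving regularity: the domain of $\psi_j$ is $\R_{\geq 0}$ (not $\R_{>0}$), its codomain is $\R_{>0}$, and it is nonincreasing. Hence $\psi_j(0)$ is a finite positive real number and $\psi_j(t) \leq \psi_j(0)$ for \emph{every} $t \geq 0$. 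So for $c < y \leq 2c$ and $x \in [0,c]$ you have, by monotonicity alone,
\[
\psi_j(y-x) \leq \psi_j(0) \qquad\text{and}\qquad \psi_j(y) \geq \psi_j(2c) > 0,
\]
giving $\psi_j(y-x) \leq \big(\psi_j(0)/\psi_j(2c)\big)\,\psi_j(y)$. This is a one-line monotonicity argument after all, not the ``main obstacle''; regularity is needed only in step (2). With this correction your proof goes through with $s_j = \max\{\,b^{-k},\ \psi_j(0)/\psi_j(2c)\,\}$ (where $k$ is chosen so that $a^k \geq 2$) and $s = \max_j s_j$.
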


We now  formulate definitions that axiomatize   {(i) of Lemma \ref{3.1and3.5} and certain desirable properties furnished by (ii) and (iii) of the same lemma.  }

\ignore{ \rmv{~see Definitions \ref{asympmeasureinvariancedefn} and \ref{unifmeasureinvariancedefn}. The analogues of \cite[Theorems 3.4 and 3.8]{KS} then follow immediately:~see Theorems \ref{asymptoticmain} and \ref{mainuniformresult}.  We then prove a volume estimate in Theorem \ref{submersionvol} that allows us to establish a generalization of Theorem \ref{simplesubmerapprox}:~see Theorem \ref{submerapprox}. Lastly, we consider two examples that, due to the presence of singularities, are not within the purview of Theorems \ref{submersionvol} and \ref{submerapprox}:~see Theorems \ref{productcoord} and \ref{khintchine}. }            \comm{Let us remove this description for now, we'll put it back later.}}


\begin{defn}\label{asympmeasureinvariancedefn} Let $f : \R^n \to \R^{\ell}$ be Borel measurable, and let $\psi : \R_{\geq 0} \to \left(\R_{>0}\right)^\ell$ be regular and nonincreasing. 
We say that the pair $(f, \psi)$ is \textsl{asymptotically acceptable} if the following holds:~for any $s \in \R_{>0}$ and any norms $\nu_1, \nu_2$ on $\R^n,$ we have \eq{asympproperty}{ m\left( A_{f, \psi, \nu_1} \right) < \infty \quad \text{if and only if} \quad m\left( A_{f, s\psi, \nu_2} \right) < \infty. } 

\end{defn}


\begin{defn}\label{unifmeasureinvariancedefn}
Let $f : \R^n \to \R^{\ell}$ and $\psi : \R_{\geq 0} \to \left(\R_{>0}\right)^\ell$ each be Borel measurable. We say that the pair $(f, \psi)$ is \textsl{uniformly acceptable} if the following holds:~for any $s_1, s_2, s_3, s_4 \in \R_{>0}$ and any norms $\nu_1, \nu_2$ on $\R^n,$  we have 
\eq{unifproperty}{ 0 < \liminf_{T \to \infty} \frac{m\left(B_{f, s_3\psi(T), \nu_2, s_4 T}\right)}{m\left(B_{f, s_1\psi(T), \nu_1, s_2 T}\right)} \leq \limsup_{T \to \infty} \frac{m\left(B_{f, s_3\psi(T), \nu_2, s_4 T}\right)}{m\left(B_{f, s_1\psi(T), \nu_1, s_2 T}\right)} < \infty.}   \end{defn}

\noindent  Informally speaking, the pair $(f, \psi)$ is asymptotically (respectively, uniformly) acceptable if the measure of sets of the form $A_{f, \psi, \nu}$  (respectively, $B_{f, \psi(T), \nu, T}$) does not change drastically when one multiplies $\psi$ (respectively, $\psi$ and the occurrence of $T$ that is not the argument of $\psi$) by some arbitrary constant (respectively, constants) and changes the norm $\nu$.

\begin{rmk}
Note that if $f: \R^n \to \R^\ell$ is subhomogeneous and $\psi : \R_{\geq 0} \to \left(\R_{>0}\right)^{\ell}$ is regular and nonincreasing, then Lemma \ref{3.1and3.5} implies that the pair $(f, \psi)$ is both asymptotically and uniformly acceptable. Therefore, the  framework of \cite{KS} can be subsumed into that of the present paper. \end{rmk}

\ignore{\begin{rmk} \rm 
Definition \ref{unifmeasureinvariancedefn} may appear rather convoluted, but it serves a simple technical purpose that will become obvious in the proof of Theorem \ref{mainuniformresult}(i):~the properties that this definition captures enable us to use the limit comparison test.  
\end{rmk}

\begin{rmk} \rm   
Note that if $f: \R^n \to \R^\ell$ is subhomogeneous and $\psi : \R_{\geq 0} \to \left(\R_{>0}\right)^{\ell}$ is regular and nonincreasing, then Lemma \ref{3.1and3.5} implies that the pair $(f, \psi)$ is both asymptotically acceptable and uniformly acceptable.  \end{rmk}} Let us now state and prove a lemma that will simplify various proofs in \S \ref{acc} that concern the verification of asymptotic and uniform acceptability. 

\begin{lem}\label{scalingonly} 
Let $f : \R^n \to \R^{\ell}$ and $\psi : \R_{\geq 0} \to \left(\R_{>0}\right)^\ell$ each be Borel measurable. 
\begin{itemize}
\item[{\rm (i)}] Suppose that $\psi$ is regular and nonincreasing. Suppose that there exists a norm $\nu$ on $\R^n$ such that for any $s \in \R_{>0}$ we have \[ m\left( A_{f, \psi, \nu} \right) < \infty \quad \text{if and only if} \quad m\left( A_{f, s\psi, \nu} \right) < \infty. \] Then the pair $(f,\psi)$ is asymptotically acceptable.    

\vspace{0.05in}

\item[{\rm (ii)}] Suppose that there exists a norm $\nu$ on $\R^n$ such that for any $s_1, s_2, s_3, s_4 \in \R_{>0}$ we have \[ 0 < \liminf_{T \to \infty} \frac{m\left(B_{f, s_3\psi(T), \nu, s_4 T}\right)}{m\left(B_{f, s_1\psi(T), \nu, s_2 T}\right)} \leq \limsup_{T \to \infty} \frac{m\left(B_{f, s_3\psi(T), \nu, s_4 T}\right)}{m\left(B_{f, s_1\psi(T), \nu, s_2 T}\right)} < \infty. \] Then the pair $(f,\psi)$ is uniformly acceptable.   
\end{itemize}
\end{lem}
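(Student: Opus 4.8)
The plan is to reduce each of the two two-norm conditions to the corresponding single-norm hypothesis, using only that all norms on the finite-dimensional space $\R^n$ are mutually equivalent and, for part (i), that a regular nonincreasing $\psi$ changes only by a multiplicative constant under a bounded rescaling of its argument. I would first isolate this last fact as an elementary observation: \emph{if $\psi : \R_{\geq 0} \to (\R_{>0})^\ell$ is regular and nonincreasing, then for every $c \in \R_{>0}$ there are $s_-, s_+ \in \R_{>0}$ with $s_-\psi(z) \les \psi(cz) \les s_+\psi(z)$ for all $z \in \R_{>0}$.} If $c \geq 1$, the upper bound holds with $s_+ = 1$ by monotonicity, while iterating the defining inequality $b\psi(z) \les \psi(az)$ of regularity gives $b^k\psi(z) \les \psi(a^kz) \les \psi(cz)$ once $a^k \geq c$, so $s_- = b^k$ works; the case $c < 1$ is symmetric. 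It is also worth noting that $s\psi$ is again regular (with the same $a, b$) and nonincreasing for every $s \in \R_{>0}$.

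For part (i): let $\nu$ be the norm supplied by the hypothesis, let $\nu_1$ be arbitrary, and choose $\alpha, \beta \in \R_{>0}$ with $\alpha\nu \leq \nu_1 \leq \beta\nu$. Since $\psi$ is nonincreasing, $\psi(\beta\nu(\xx)) \les \psi(\nu_1(\xx)) \les \psi(\alpha\nu(\xx))$ for every $\xx \in \R^n$; applying the observation with $c = \beta$ and $c = \alpha$ yields $s_-\psi(\nu(\xx)) \les \psi(\nu_1(\xx)) \les s_+\psi(\nu(\xx))$ for suitable $s_\pm \in \R_{>0}$. Hence $A_{f, s_-\psi, \nu} \subseteq A_{f, \psi, \nu_1} \subseteq A_{f, s_+\psi, \nu}$, so using the scaling hypothesis with $s = s_-$ and with $s = s_+$ gives $m(A_{f,\psi,\nu_1}) < \infty$ if and only if $m(A_{f,\psi,\nu}) < \infty$. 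Running the same argument with $\psi$ replaced by $s\psi$ and $\nu_1$ by $\nu_2$ gives $m(A_{f,s\psi,\nu_2}) < \infty$ if and only if $m(A_{f,s\psi,\nu}) < \infty$, and chaining these two equivalences through the hypothesis $m(A_{f,\psi,\nu}) < \infty \Leftrightarrow m(A_{f,s\psi,\nu}) < \infty$ produces exactly \equ{asympproperty}.

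For part (ii): here no regularity is needed, because the argument of $\psi$ is never rescaled. Let $\nu$ be the norm from the hypothesis, let $\nu_1, \nu_2$ be arbitrary, and pick positive constants with $\alpha_i\nu \leq \nu_i \leq \beta_i\nu$ for $i = 1, 2$. For every ${\pmb\varepsilon} \in (\R_{>0})^\ell$ and $T \in \R_{>0}$ the inclusions $\{\xx : \nu(\xx) \leq T/\beta_i\} \subseteq \{\xx : \nu_i(\xx) \leq T\} \subseteq \{\xx : \nu(\xx) \leq T/\alpha_i\}$ give $B_{f,{\pmb\varepsilon},\nu,T/\beta_i} \subseteq B_{f,{\pmb\varepsilon},\nu_i,T} \subseteq B_{f,{\pmb\varepsilon},\nu,T/\alpha_i}$. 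Using this with ${\pmb\varepsilon} = s_3\psi(T)$, radius $s_4T$, and $i = 2$ in the numerator, and with ${\pmb\varepsilon} = s_1\psi(T)$, radius $s_2T$, and $i = 1$ in the denominator, the ratio appearing in \equ{unifproperty} is sandwiched between $m(B_{f,s_3\psi(T),\nu,s_4T/\beta_2})/m(B_{f,s_1\psi(T),\nu,s_2T/\alpha_1})$ and $m(B_{f,s_3\psi(T),\nu,s_4T/\alpha_2})/m(B_{f,s_1\psi(T),\nu,s_2T/\beta_1})$. The $\liminf$ of the former is positive and the $\limsup$ of the latter is finite by the single-norm hypothesis (with scaling constants $s_3, s_4/\beta_2, s_1, s_2/\alpha_1$ and $s_3, s_4/\alpha_2, s_1, s_2/\beta_1$ respectively), which forces \equ{unifproperty}.

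I do not anticipate a real obstacle; the lemma is a bookkeeping reduction, and the only thing that needs genuine care is the direction of the many inequalities — monotonicity of $\psi$ reverses the order of arguments, the norm-equivalence constants enter as reciprocals of the ball radii, and the regularity iteration in the preliminary observation must be run "forwards" or "backwards" according to whether $c \geq 1$ or $c < 1$. One should also remark that the $\liminf > 0$ built into the hypothesis of (ii) already guarantees that the volumes $m(B_{f,\,\cdot\,,\nu,\,\cdot\,T})$ are finite and eventually positive, so that all ratios above are well defined for large $T$.
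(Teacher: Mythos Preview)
Your proposal is correct and follows essentially the same approach as the paper's proof. The paper streamlines part (i) slightly by choosing the norm-equivalence constants to be powers of the regularity constant $a$ from the outset (picking $k$ with $a^{-k}\nu\le\eta\le a^k\nu$, which collapses your preliminary observation and the norm comparison into a single step), but the underlying idea---sandwiching $\psi(\eta(\xx))$ between constant multiples of $\psi(\nu(\xx))$ and then invoking the single-norm hypothesis---is identical, and your treatment of part (ii) matches the paper's argument almost verbatim.
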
   
\begin{proof}
Let $\eta$ be an arbitrary norm on $\R^n.$ 
\begin{itemize}
\item[(i)] Let $a = a(\psi) \in \R_{>1}$ and $b = b(\psi) \in \R_{>0}$ be as in Definition \ref{basicapdefns}. Let $\nu$ be a norm on $\R^n$ as in the hypotheses of (i). Fix $k \in \N$ for which $\ds a^{-k} \, \nu \leq \eta \leq a^k \, \nu.$ For every $\xx \in \R^n,$ we have \[ \psi\big(\eta(\xx)\big) \les \psi\big(a^{-k} \, \nu(\xx) \big) \les b^{-k} \, \psi\big(\nu(\xx)\big). \] For every $\xx \in \R^n,$ we similarly have \[ b^k \, \psi\big(\nu(\xx)\big) \les \psi\big(\eta(\xx)\big). \] It follows that for each $s \in \R_{>0},$ we have \[ m\left( A_{f, {b^k s \psi}, \nu} \right) \leq m\left( A_{f, {s\psi}, \eta} \right) \leq m\left( A_{f, {b^{-k} s \psi}, \nu} \right). \] 
{This implies the desired result.}

\vspace{0.05in}

\item[(ii)] We do not assume that $\psi$ is regular, and we do not assume that $\psi$ is nonincreasing. Let $\nu$ be a norm on $\R^n$ as in the hypotheses of (ii). Fix $C \in \R_{>1}$ for which $\ds C^{-1} \, \nu \leq \eta \leq C \, \nu.$ Let $s_1, s_2, T \in \R_{>0}$ be arbitrary. We clearly have \[ m\left(B_{f, s_1\psi(T), \nu, {C^{-1} s_2 T}}\right) \leq m\left(B_{f, s_1\psi(T), \eta, s_2 T}\right) \leq m\left(B_{f, s_1\psi(T), \nu, {C s_2 T}}\right). \] The desired result now follows.  \end{itemize} \end{proof}

We now proceed to establish our main theorems.  

\smallskip

\begin{thm}\label{asymptoticmain} Let $G$ be a closed subgroup of $\operatorname{ASL}_n(\R)$, and let ${{\mathcal{P}}}$ be a subset of $\Z^n.$ Suppose $G$ is of $\mathcal{P}$-Siegel type. Let $\ds {c} = {c}_{{{\mathcal{P}}}}$ be as in Definition {\rm\ref{SiegelRogers}(i)}. Let $f : \R^n \to \R^{\ell}$ be Borel measurable. Let $\psi : \R_{\geq 0} \to \left(\R_{>0}\right)^\ell$ be regular and nonincreasing. 
Suppose that the pair $(f, \psi)$ is asymptotically acceptable. Let $\eta$ and $\nu$ be arbitrary norms on $\R^n.$ 
\vspace{0.055in}  
\begin{itemize}
\item[\rm (i)] Suppose $\ds m\left( A_{f, \psi, \eta} \right) < \infty.$ Then for almost every $g \in G$ the function $f\circ g$ is not $(\psi, \nu, \mathcal{P})$-approximable.   
\smallskip

\item[\rm (ii)] Suppose $\ds m\left( A_{f, \psi, \eta} \right) = \infty,$  and suppose that we are given $r \in \R_{> 1}$  for which $G$ is of $\ds \left(\mathcal{P}, r\right)$-Rogers type.  Then \ignore{for each nonempty compact subset $K$ of $G$  there exist constants $D_K \in \R_{\geq 1},$ $E_K \in \R_{\geq 0}$  and $\ds J_K \in \R_{\geq 1}$ such that for $\mu_G$-almost every $g \in K$ we have \begin{align}\label{limsup}
\limsup_{T \to \infty}\frac{{\#}\left\lbrace \vv \in {{\mathcal{P}}} : \left|( f \circ  g )(\vv)\right| \les \psi\big( \nu(\vv) \big) \ \operatorname{and} \ 2 D_K E_K < \nu(\vv) \leq T \right\rbrace}{m\left(\left\lbrace \ttt \in \R^n : \left| f(\ttt) \right| \les J_K \psi\big(\nu(\ttt)\big) \ \operatorname{and} \ {E_K} < \nu(\ttt) \leq D_K T + E_K \right\rbrace\right)} \leq {{c}_{{\mathcal{P}}}} \qquad \end{align} and 
\begin{align}\label{liminf} \ \quad 
\liminf_{T \to \infty} \frac{{\#}\left\lbrace \vv \in {{\mathcal{P}}} : \left|( f \circ  g )(\vv)\right| \les \psi\big( \nu(\vv)\big) \ \operatorname{and} \ {E_K D_K^{-1}} < \nu(\vv) \leq D_K E_K + D_K T \right\rbrace}{m\left(\left\lbrace \ttt \in \R^n : \left| f(\ttt) \right|  \les J_K^{-1}\psi\big((\nu(\ttt)\big) \ \operatorname{and} \ 2E_K < \nu(\ttt) \leq T \right\rbrace\right)} \geq {{c}_{{\mathcal{P}}}}. \end{align}

\smallskip

\noindent   If $K \subseteq \SL,$ then each of the inequalities in \eqref{limsup} and \eqref{liminf} holds with $E_K = 0.$

\noindent   In particular,}for almost every $g \in G$ the function $f \circ g$ is $(\psi, \nu, {{\mathcal{P}}})$-approximable. \end{itemize} \end{thm}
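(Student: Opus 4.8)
The plan is to reduce both statements to the counting results of Theorem \ref{genericcounting} applied to a well-chosen Borel set $E\subset\R^n$, exploiting the asymptotic acceptability of $(f,\psi)$ to absorb the discrepancy between the ``$f\circ g$ at $\nu$'' picture and a fixed reference picture. The natural candidate is
\[
E \;:=\; A_{f,\psi,\nu} \;=\; \bigl\{\xx\in\R^n : |f(\xx)|\les\psi(\nu(\xx))\bigr\}.
\]
The key elementary observation is a Fubini-type equivalence: for $g\in G$, writing $g=\langle h,\zz\rangle$, one has $\vv\in\mathcal P$ with $|(f\circ g)(\vv)|\les\psi(\nu(\vv))$ roughly iff $g\vv\in A_{f\circ(\text{translation}),\psi,\nu\circ g^{-1}}$ — but more cleanly, I would note that $f\circ g$ being $(\psi,\nu,\mathcal P)$-approximable is equivalent to $g\mathcal P$ meeting $A_{f,\psi,\nu\circ g^{-1}}$ infinitely often, and then argue that for the purpose of (in)finiteness one may replace $\nu\circ g^{-1}$ by the fixed norm $\eta$ at the cost of rescaling $\psi$ by a constant depending on $g$; this is exactly what asymptotic acceptability is designed to handle (the norm $\nu\circ g^{-1}$ is comparable to $\eta$ up to the fixed constant $\|g^{\pm1}\|$, and $\psi$ is regular). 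Hence $m(A_{f,\psi,\eta})<\infty$ implies $m(A_{f,s\psi,\nu\circ g^{-1}})<\infty$ for every $s$ and every $g$, and similarly in the infinite case.

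For part (i): assume $m(A_{f,\psi,\eta})<\infty$. By asymptotic acceptability, $m(A_{f,\psi,\nu})<\infty$; set $E:=A_{f,\psi,\nu}$. By Theorem \ref{genericcounting}(i) applied with $\mathcal P$, for $\mu_X$-a.e.\ $\Lambda\in X$ we have $\#(\Lambda\cap E)<\infty$. Translating this statement from $X=G/\Gamma$ back to $G$ (using that $X$ is identified with $\{g\mathcal P:g\in G\}$ and that the pushforward of $\mu_G$ is $\mu_X$), we get: for $\mu_G$-a.e.\ $g\in G$, the set $g\mathcal P\cap A_{f,\psi,\nu}$ is finite. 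Unwinding, $g\mathcal P\cap A_{f,\psi,\nu}$ finite says precisely that there are only finitely many $\vv\in\mathcal P$ with $g\vv\in A_{f,\psi,\nu}$, i.e.\ with $|f(g\vv)|\les\psi(\nu(g\vv))$. This is not literally the condition $|(f\circ g)(\vv)|\les\psi(\nu(\vv))$ because of the mismatch $\nu(g\vv)$ versus $\nu(\vv)$; to fix this, apply the argument instead with the set $E=A_{f,\psi,\nu\circ g^{-1}}$ absorbed via acceptability, or, cleaner, phrase everything in terms of the lattice point count and invoke Lemma \ref{scalingonly}/Definition \ref{asympmeasureinvariancedefn} so that changing $\nu$ by a bounded factor keeps finiteness. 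In any case the conclusion is that $f\circ g$ is not $(\psi,\nu,\mathcal P)$-approximable for a.e.\ $g$. For part (ii): assume $m(A_{f,\psi,\eta})=\infty$ and $G$ is of $(\mathcal P,r)$-Rogers type. Then $m(A_{f,\psi,\nu})=\infty$ by asymptotic acceptability; apply Theorem \ref{genericcounting}(ii) with $E=A_{f,\psi,\nu}$ and the norm $\nu$, obtaining for $\mu_X$-a.e.\ $\Lambda$ that $\#(\Lambda\cap E)=\infty$, hence (after the same translation and norm-absorption bookkeeping) $f\circ g$ is $(\psi,\nu,\mathcal P)$-approximable for $\mu_G$-a.e.\ $g$.

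The main obstacle I anticipate is precisely the bookkeeping in the previous paragraph: reconciling the ``intrinsic'' approximation condition $|(f\circ g)(\vv)|\les\psi(\nu(\vv))$ — where the argument of $\psi$ is $\nu(\vv)$, the length of the \emph{pre-image} point — with the ``extrinsic'' condition $g\vv\in A_{f,\psi,\nu}$, where the relevant length is $\nu(g\vv)$. The standard device is to work with the two comparable norms $\nu$ and $\nu\circ g^{-1}$ (comparable up to the operator norms of $g^{\pm1}$, a $g$-dependent but finite constant), use regularity of $\psi$ to trade the norm change for a bounded rescaling $\psi\mapsto s_g\psi$, and then invoke asymptotic acceptability exactly to ensure that this rescaling does not affect the finiteness/infiniteness of $m(A_{f,\cdot,\cdot})$ — which is why the definition of asymptotic acceptability quantifies over \emph{all} $s>0$ and \emph{all} norms. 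A secondary but routine point is the measure-theoretic transfer from $X=G/\Gamma$ to $G$: the set of ``bad'' $g\Gamma$ in $X$ having $\mu_X$-measure zero pulls back to a $\mu_G$-null set in $G$ since $\mu_X$ is the pushforward of $\mu_G$ under $g\mapsto g\Gamma$. One must also handle the affine case ($G\subseteq\operatorname{ASL}_n(\R)$) uniformly with the linear case, but since the translation part of $g=\langle h,\zz\rangle$ only shifts $f$ (and we are free to use $f$ arbitrary Borel, so $\langle h,\zz\rangle$ acts by $\xx\mapsto \zz+h\xx$ and the Siegel/Rogers axioms are stated for exactly this action), no extra work is needed beyond noting $\nu(g\vv)\le\|h\|\,\nu(\vv)+\nu(\zz)$ and the analogous lower bound, which again is absorbed by acceptability and regularity.
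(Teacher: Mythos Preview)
Your approach is the same as the paper's, but there is a genuine logical gap in how you close the argument. You correctly identify that passing from the condition $|f(g\vv)|\les\psi(\nu(\vv))$ to membership $g\vv\in A_{f,\,s\psi,\,\nu}$ costs a scaling $s=s_g$ depending on $g$, and that asymptotic acceptability guarantees $m(A_{f,s\psi,\nu})<\infty$ for every $s$. But a single application of Theorem~\ref{genericcounting}(i) to a single set $E$ does not suffice, because the set you would need depends on $g$. The paper handles this by applying Theorem~\ref{genericcounting}(i) to $A_{f,M\psi,\nu}$ for each $M\in\N$ separately, forming the countable $\mu_G$-null union
\[
S_1=\bigcup_{M\in\N}\bigl\{g\in G:\#\bigl(g\mathcal P\cap A_{f,M\psi,\nu}\bigr)=\infty\bigr\},
\]
and then proving the inclusion $g\bigl(\{\xx\in A_{f\circ g,\psi,\nu}:\nu(\xx)>2DE\}\bigr)\subseteq A_{f,N\psi,\nu}$ for some $N=N(g)\in\N$, so that every $(\psi,\nu,\mathcal P)$-approximable $g$ lies in $S_1$. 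Part (ii) is handled symmetrically with $A_{f,\psi/M,\nu}$ and Theorem~\ref{genericcounting}(ii). Your sketch never makes this countable-union step explicit, and your alternative suggestion $E=A_{f,\psi,\nu\circ g^{-1}}$ is not a fix: $\nu\circ g^{-1}$ is not a norm when $g=\langle h,\zz\rangle$ has $\zz\ne 0$, so asymptotic acceptability does not apply to it, and in any case a $g$-dependent $E$ cannot be fed into Theorem~\ref{genericcounting}.

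A second, smaller point: the additive discrepancy coming from the translation part $\zz$ is not absorbed by mere regularity of $\psi$ (which handles multiplicative rescalings of the argument). The paper invokes Lemma~\ref{KSLemma3.2} to get $\psi(y-x)\les F\psi(y)$ for $x\in[0,E]$, $y>E$, with $E=\nu(\zz)$; this is what converts the affine estimate $\nu(g\vv)\ge \|h^{-1}\|^{-1}\nu(\vv)-\nu(\zz)$ into a usable multiplicative bound on $\psi$. You gesture at this in your last paragraph but do not name the mechanism.
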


\begin{proof} 
Let $a = a(\psi) \in \R_{>1}$ and $b = b(\psi) \in \R_{>0}$ be as in Definition \ref{basicapdefns}. Let us denote elements of ${\rm ASL}_n(\R)$ by $\langle h,\zz \rangle$, where $h \in \SL$ and $\zz \in \R^n$; that is, 
$\langle h,\zz\rangle : \R^n \to \R^n$ is the affine map given by $\xx \mapsto \zz + h\xx$. For any $h \in \SL$, let $\|h\|$ denote the operator norm of $h$ that is given by \eq{norm}{\ds \|h\|:= \sup\left\lbrace \nu(h\xx) : \xx \in \R^n \ \text{and} \ 
\nu(\xx) \leq 1 \right\rbrace.}           

\smallskip

We first prove (i). Suppose $\ds m\left(A_{f, \psi, \eta}\right) < \infty.$ In view of \equ{asympproperty}, it follows that for every $M \in \N$ we have $\ds m\left(A_{f, M\psi, \nu}\right) < \infty.$ Theorem \ref{genericcounting}(i) then implies that for every $M \in \N$ we have \[\mu_{X}\left(\left\{\Lambda \in X: {\#}\left(\Lambda \cap A_{f, M\psi, \nu}\right) = \infty \right\}\right) = 0. \] Hence, the set \[ S_1 :=   \bigcup_{M \in \N}\left\lbrace g \in G: {\#}\left(g\mathcal{P} \cap A_{f, M\psi, \nu}\right) = \infty \right\rbrace \] satisfies $\mu_G({S_1}) = 0$. Now let $g = \langle h, \zz \rangle$ be any element of $G$ for which \eq{appr}{f \circ g\text{ is $\left(\psi, \nu, {{\mathcal{P}}}\right)$-approximable}.} Let $\ds D := \max\left\lbrace \|h\|, \norm{h^{-1}}\right\rbrace,$ and let $\ds E := \nu(\zz).$ Let $k\in\N$ be such that $\ds a^{k} > D.$ 
Since $\psi$ is regular and nonincreasing, it follows from {Lemma \ref{KSLemma3.2}}
that there exists $F \in \R_{>0}$ for which the following is true:~for each $x \in [0, E]$ and each $\ds y \in (E, \infty),$ we have $\psi(y-x) \les F \psi(y).$ Let $N$ be any element of $\N$ with $N > b^{-k}F.$ {A simple argument then yields the inclusion} \eq{nullinclusion}{  {g\left(\left\lbrace \xx \in 
A_{f \circ g, \psi, \nu}  : \nu(\xx) > 2 DE \right\rbrace\right) \subseteq 
A_{f, N\psi, \nu}.}} In light of \equ{appr} and \equ{nullinclusion}, it follows that $\ds {\#}\left(g\mathcal{P} \cap A_{f, N\psi, \nu}\right) = \infty.$ We conclude that $g$ belongs to the $\mu_G$-null set ${S_1}$. 
This completes the proof of (i).

\smallskip

{The proof of (ii) proceeds along similar lines}. Suppose $\ds m\left( A_{f, \psi, \eta} \right) = \infty,$  and suppose that we are given $r \in \R_{> 1}$  for which $G$ is of $\ds \left(\mathcal{P}, r\right)$-Rogers type. It then follows from \equ{asympproperty} and Theorem \ref{genericcounting}(ii) that the set  \[ S_2 :=   \bigcup_{M \in \N}\left\lbrace g \in G: {\#}\left(g\mathcal{P} \cap A_{f, \psi/M, \nu}\right) < \infty \right\rbrace \] is $\mu_G$-null.  Now let $g$ be any element of $G$ for which $f \circ g$ is not $\left(\psi, \nu, {{\mathcal{P}}}\right)$-approximable. Arguing as in the proof of (i), we conclude that 
{$g\in S_2$, which finishes}
the proof of (ii).  \end{proof}

\begin{rmk}\label{quant} \rm Using \equ{limit} and arguing as in the proof of \cite[Theorem 3.4]{KS}, it is possible to enhance the qualitative conclusion of Theorem \ref{asymptoticmain}(ii) in a quantitative fashion. Since we are primarily interested in qualitative results, we decided to forego quantitative arguments. 
\end{rmk}

\ignore{Let $\varepsilon \in \R_{>0}$ be given. Let $K$ be an arbitrary nonempty compact subset of $G$; we assume without loss of generality that $K = K^{-1}.$ Define $\pi : {\rm ASL}_n(\R) \to \SL$ and $\rho : {\rm ASL}_n(\R) \to \R^n$ by $\ds \pi : \langle h, \zz \rangle \mapsto h$ and $\ds \rho : \langle h, \zz \rangle \mapsto \zz.$ Notice that each of these maps is continuous. We define \eq{constants}{ D_K := \sup \left\lbrace \|h\| : h \in \pi(K)  \right\rbrace < \infty \quad \quad \quad \text{and} \quad \quad \quad E_K := \sup \left\lbrace \nu(\zz) : \zz \in \rho(K) \right\rbrace < \infty.} Note that $D_K \geq 1.$ 
Set $\ds {k} := \min\left\lbrace j \in \Z_{\geq 0} : a^j \geq D_K \right\rbrace$ and $C_K := b^{-{k}}.$ Note that $C_K \geq 1.$ 
Since $\psi$ is regular and nonincreasing, it follows from \cite[Lemma 3.2]{KS} that there exists $F_K \in \R_{>0}$ for which the following is true:~for each $x \in [0, E_K]$ and each $y \in (E_K, \infty),$ we have $\ds \psi(y-x) \les F_K \psi(y).$ Set $\ds J_K := C_K F_K.$ Since $\psi$ is nonincreasing and its image is a subset of $\left(\R_{>0}\right)^\ell,$ it follows $F_K \geq 1.$ Thus, $J_K \geq 1.$

\smallskip

Let $g_1 \in K$ be arbitrary. Let $R$ be any real number with $R > 2D_KE _K.$ Arguing as in the proof of \cite[Theorem 3.4]{KS}, we establish that \eq{supinc}{ g_1\left(
\left\lbrace \ttt \in A_{ f \circ g_1, \psi, \nu } : 2 D_K E_K < \nu(\ttt) \leq R \right\rbrace \right) \subseteq \left\lbrace \ttt \in A_{f, J_K \psi, \nu} : E_K < \nu(\ttt) \leq D_K R + E_K \right\rbrace.} In view of \equ{asympproperty}, we have $\ds m\left(\left\lbrace \ttt \in A_{f, J_K \psi, \nu} : \nu(\ttt) > E_K \right\rbrace\right) = \infty.$ Using \equ{supinc} and then applying \cite[Theorem 2.9(ii)]{KS}, it follows that for $\mu_G$-almost every $g \in K$ and any $\varepsilon \in \R_{>0}$ there exists some $\ds T_g \in \R_{>0}$ such that for every $T \in \left(T_g, \infty\right),$ we have \begin{align*}
&\frac{{\#}\left\lbrace \vv \in {{\mathcal{P}}} : (f \circ g)(\vv) \les \psi\big( \nu(\vv) \big) \ \text{and} \ 2 D_K E_K < \nu(\vv) \leq T \right\rbrace}{m\left(\left\lbrace \ttt \in \R^n : f(\ttt) \les J_K\psi\big(\nu(\ttt)\big) \ \text{and} \ {E_K} < \nu(\ttt) \leq D_K T + E_K \right\rbrace\right)} \\
&\leq \frac{{\#}\left\lbrace \ww \in  g {{\mathcal{P}}} : f(\ww) \les J_K\psi\big(\nu(\ww)\big) \ \text{and} \ {E_K} < \nu(\ww) \leq D_K T + E_K \right\rbrace}{m\left(\left\lbrace \ttt \in \R^n : f(\ttt) \les J_K\psi(\nu\big(\ttt)\big) \ \text{and} \ {E_K} < \nu(\ttt) \leq D_K T + E_K \right\rbrace\right)} \\
&< {{c}_{{\mathcal{P}}}} + \varepsilon.
\end{align*}
It follows that \eqref{limsup} holds for $\mu_G$-almost every $g \in K$. A similar argument allows us to conclude that \eqref{liminf} holds for  $\mu_G$-almost every $g \in K$. It is clear from \equ{constants} that $E_K = 0$ if and only if $K \subseteq \SL$, which proves the penultimate statement of (ii). The final statement of (ii) follows from the $\sigma$-compactness of $G.$

Now let $g_2 \in K$ be arbitrary. Let $R'$ be any real number with $R' > 2E_K.$ We then have \eq{infinc}{ g_2^{-1}\left(
\left\lbrace \ttt \in  A_{f, J_K^{-1}\psi, \nu} : 2 E_K <\nu(\ttt) \leq R' \right\rbrace\right)
 \subseteq \left\lbrace \ttt \in A_{f \circ g_2, \psi, \nu} : {E_K D_K^{-1}} < \nu(\ttt) \leq D_K\left(E_K + R'\right) \right\rbrace.} The proof of \equ{infinc} is very similar to that of \equ{supinc}.\footnote{The proof of \cite[(3.8)]{KS}, which corresponds to \equ{infinc} above, was omitted from the proof of \cite[Theorem 3.4]{KS} because of its similarity to the detailed proof of \cite[(3.7)]{KS}, which corresponds to \equ{supinc} above. A detailed proof of \equ{infinc} can be found in the dissertation of the co-author Skenderi:~see the proof of \cite[Theorem 6.4]{SkeDis}.} In view of \equ{asympproperty}, we have $$ m\left(\left\lbrace \ttt \in  A_{f, J_K^{-1}\psi, \nu} : \nu(\ttt) > 2 E_K \right\rbrace\right) = \infty.$$ Using \equ{infinc} and then applying \cite[Theorem 2.9 (ii)]{KS}, it follows that for $\mu_G$-almost every $g \in K$ and any $\varepsilon \in \R_{>0}$ there exists some $\ds T_{g}' \in \R_{>0}$ such that for every $T \in \left( T_{g}', \infty\right),$ we have  \begin{align*}
&\frac{{\#}\left\lbrace \vv \in {{\mathcal{P}}} : (f \circ g)(\vv) \les \psi\big( \nu(\vv)\big) \ \text{and} \ {E_K D_K^{-1}} < \nu(\vv) \leq D_K\left(E_K + T\right) \right\rbrace}{m\left(\left\lbrace \ttt \in \R^n : f(\ttt) \les J_K^{-1}\psi\big(\nu(\ttt)\big) \ \text{and} \ 2E_K < \nu(\ttt) \leq T \right\rbrace\right)} \\
&\geq \frac{{\#}\left\lbrace \ww \in g{{\mathcal{P}}} : f(\ww) \les J_K^{-1}\psi\big(\nu(\ww)\big) \ \text{and} \ 2E_K < \nu(\ww) \leq T \right\rbrace}{m\left(\left\lbrace \ttt \in \R^n : f(\ttt) \les J_K^{-1}\psi\big(\nu(\ttt)\big) \ \text{and} \ 2E_K < \nu(\ttt) \leq T \right\rbrace\right)} \\
&> {{c}_{{\mathcal{P}}}} - \varepsilon.
\end{align*}
It follows that \eqref{liminf} holds for  $\mu_G$-almost every $g \in K$. It is clear from \equ{constants} that $E_K = 0$ if and only if $K \subseteq \left( \SL \times \left\lbrace \mathbf{0}_{\R^n} \right\rbrace\right)$, which proves the penultimate statement of (ii). The final statement of (ii) now follows simply because $G$ is $\sigma$-compact.   }
To state the next theorem, we need the following definition.       

\begin{defn}\label{subsequences}
\begin{itemize} \item[]
\item Let $\ds t_\bullet = \left(t_k\right)_{k \in \N}$ be any strictly increasing sequence of elements of $\R_{>0}$ with $\ds \lim_{k \to \infty} t_k = \infty.$ We then say that $f$ is $\ds t_\bullet$-\textsl{uniformly} $\left( \psi, \nu, \mathcal{P} \right)$-approximable if {$B_{f, \psi(t_k), \nu, t_k} \cap \mathcal{P}  \neq \varnothing$ for each sufficiently large $k \in \N$}.   
\item Let $\ds t_\bullet = \left(t_k\right)_{k \in \N}$ be any strictly increasing sequence of elements of $\R_{>0}$ with $\ds \lim_{k \to \infty} t_k = \infty.$ We say that $t_\bullet$ is \textsl{quasi-geometric} if, in addition to the preceding, the set $\ds \left\lbrace t_{k+1}/t_k : k \in \N \right\rbrace$ is bounded from above.    
\end{itemize}   
\end{defn}


\begin{thm}\label{mainuniformresult} Let $G$ be a closed subgroup of $\operatorname{ASL}_n(\R)$, and let ${{\mathcal{P}}}$ be a subset of $\Z^n.$ Suppose $G$ is of $\mathcal{P}$-Siegel type, and suppose that we are given $r \in \R_{>1}$ for which $G$ is of $\left(\mathcal{P}, r\right)$-Rogers type. Let $f : \R^n \to \R^{\ell}$ and $\psi : \R_{\geq 0} \to \left(\R_{>0}\right)^\ell$ each be Borel measurable. Suppose that the pair $(f, \psi)$ is uniformly acceptable. Let $u_\bullet = (u_k)_{k \in \N}$ be any strictly increasing sequence of elements of $\R_{>1}$ with $\ds \lim_{k \to \infty} u_k = \infty.$ Suppose that there exists some norm $\eta$ on $\R^n$ for which \eq{conv}{\inf_{N \in \N}\sum_{k=N}^{\infty}  m\left(B_{f, \psi\left(u_k\right), \eta, u_k}\right)^{1-r} < \infty.} Let $\nu$ be an arbitrary norm on $\R^n$. The following then hold. 
\vspace{0.055in}
\begin{itemize}
\item[\rm (i)] For almost every $g \in G$ the function $f \circ g$ is $u_\bullet$-uniformly $\ds \left(\psi, \nu, \mathcal{P}\right)$-approximable. 
\vspace{0.055in}
\item[\rm (ii)] Suppose further that $\psi$ is regular and nonincreasing and that the sequence $u_\bullet$ is quasi-geometric. Then for almost every $g \in G$ the function $f \circ g$ is uniformly $(\psi, \nu, {{\mathcal{P}}})$-approximable. \end{itemize} \end{thm}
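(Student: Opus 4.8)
The plan is to deduce both parts from Theorem~\ref{genericcounting}(iii), obtaining (ii) from (i) by replacing $\psi$ with a slightly contracted copy of itself. For (i): since $G$ is $\sigma$-compact, it suffices to fix a nonempty compact $K \subseteq G$ and prove the conclusion for $\mu_G$-a.e.\ $g \in K$. Writing elements of $K$ as $\langle h,\zz\rangle$, the numbers $D_K := \sup\{\max(\|h\|,\|h^{-1}\|) : \langle h,\zz\rangle\in K\}$ and $E_K := \sup\{\nu(\zz):\langle h,\zz\rangle\in K\}$ are finite by compactness and continuity. For all large $k$ (so that $u_k/D_K - E_K > u_k/(2D_K) > 0$) I would set $F_k := B_{f,\psi(u_k),\nu,u_k/D_K - E_K}$, and for the finitely many remaining indices let $F_k$ be any bounded Borel set of positive measure; each $F_k$ is bounded, hence of finite measure. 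The decisive point is the inclusion: if $g=\langle h,\zz\rangle\in K$ and $\ww\in g\mathcal{P}\cap F_k$, then $g^{-1}\ww = h^{-1}(\ww-\zz)\in\mathcal{P}$ satisfies $\nu(g^{-1}\ww) \le \|h^{-1}\|\big(\nu(\ww)+\nu(\zz)\big) \le D_K\big((u_k/D_K - E_K)+E_K\big) = u_k$ and $\big|f(g(g^{-1}\ww))\big| = |f(\ww)| \les \psi(u_k)$, so that $g^{-1}\ww\in B_{f\circ g,\psi(u_k),\nu,u_k}\cap\mathcal{P}$. Hence the event ``$g\mathcal{P}\cap F_k\ne\varnothing$ for all large $k$'' forces $f\circ g$ to be $u_\bullet$-uniformly $(\psi,\nu,\mathcal{P})$-approximable.

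To invoke Theorem~\ref{genericcounting}(iii) I must verify $\sum_k m(F_k)^{1-r} < \infty$. Since $F_k \supseteq B_{f,\psi(u_k),\nu,u_k/(2D_K)}$ for large $k$, uniform acceptability of $(f,\psi)$ (taking $s_1=s_3=1$, $s_2=1$, $s_4=1/(2D_K)$, $\nu_1=\eta$, $\nu_2=\nu$ and evaluating along $T=u_k\to\infty$) gives $\liminf_k m(F_k)/m(B_{f,\psi(u_k),\eta,u_k}) > 0$; as $1-r<0$, this yields $m(F_k)^{1-r} \ll_K m(B_{f,\psi(u_k),\eta,u_k})^{1-r}$ for large $k$ (and, combined with the fact that \equ{conv} forces $m(B_{f,\psi(u_k),\eta,u_k})>0$ eventually, it also gives $m(F_k)>0$), so convergence now follows from \equ{conv}. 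Theorem~\ref{genericcounting}(iii) then produces a $\mu_X$-conull set of $\Lambda\in X$ with $\Lambda\cap F_k\ne\varnothing$ for all large $k$. Since $g\mathcal{P}$ depends only on $g\Gamma$ and a $\mu_X$-null subset of $X$ pulls back under $G\to X$ to a $\mu_G$-null subset of $G$, we conclude that $f\circ g$ is $u_\bullet$-uniformly approximable for $\mu_G$-a.e.\ $g\in K$, hence for $\mu_G$-a.e.\ $g\in G$ after exhausting $G$ by compacta. This proves (i); note that regularity and monotonicity of $\psi$ were not needed here.

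For (ii), set $Q := \sup_k u_{k+1}/u_k$, which lies in $(1,\infty)$ because $u_\bullet$ is strictly increasing and quasi-geometric, and put $\ti\psi := \psi(Q\,\cdot\,)$; one checks directly that $\ti\psi$ is again regular and nonincreasing, and that the substitution $T\mapsto QT$ inside the $\liminf$/$\limsup$ of Definition~\ref{unifmeasureinvariancedefn} transfers uniform acceptability of $(f,\psi)$ to $(f,\ti\psi)$. Iterating the regularity inequality $b\psi(z)\les\psi(az)$ and using that $\psi$ is nonincreasing furnishes a constant $c=c(\psi,Q)>0$ with $c\psi(z)\les\psi(Qz)$ for all $z$; consequently $m(B_{f,\ti\psi(u_k),\eta,u_k}) \ge m(B_{f,c\psi(u_k),\eta,u_k}) \gg m(B_{f,\psi(u_k),\eta,u_k})$ for large $k$ by uniform acceptability, so (again using $1-r<0$) condition \equ{conv} persists with $\psi$ replaced by $\ti\psi$. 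Part (i), applied to $\ti\psi$, therefore yields, for $\mu_G$-a.e.\ $g$, points $\vv_k\in\mathcal{P}$ with $\nu(\vv_k)\le u_k$ and $|f(g\vv_k)|\les\psi(Qu_k)$ for all large $k$. Finally, given a large $T$, choose the largest $k$ with $u_k\le T$; then $T<u_{k+1}\le Qu_k$, so $\psi(Qu_k)\les\psi(T)$ by monotonicity, whence $\vv_k\in B_{f\circ g,\psi(T),\nu,T}\cap\mathcal{P}$. Thus $f\circ g$ is uniformly $(\psi,\nu,\mathcal{P})$-approximable, as desired.

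I expect the main obstacle to be the construction of the $F_k$ in (i): they must be chosen so that a single lattice point of $g\mathcal{P}$ lying in $F_k$ simultaneously controls both the $\nu$-size of its $g^{-1}$-image and the value of $f$ there, uniformly over the compact set $K$, while keeping $m(F_k)$ large enough — comparable to $m(B_{f,\psi(u_k),\eta,u_k})$ — for \equ{conv} to be applicable; this last comparison is precisely what the notion of uniform acceptability is designed to deliver. By contrast, the affine translation $\zz$ enters only through the harmless additive shift $E_K$, which is absorbed once $u_k$ is large, and the passage from (i) to (ii) rests on the single observation that $u_\bullet$-uniform approximability at the contracted rate $\psi(Qu_k)$ and radius $u_k$ already witnesses uniform approximability at every $T\in[u_k,u_{k+1})$.
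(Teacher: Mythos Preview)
Your proof is correct and follows essentially the same approach as the paper's. For (i), both you and the paper reduce to a compact $K\subseteq G$, set up sets of the form $B_{f,\psi(u_k),\nu,\,cu_k}$ (you use radius $u_k/D_K-E_K$, the paper uses $u_k/(2D_K)$ after arranging $K=K^{-1}$), invoke uniform acceptability to transfer \equ{conv} to these sets, and apply Theorem~\ref{genericcounting}(iii); your inclusion argument is the same computation the paper defers to \cite{KS}. For (ii), the paper replaces $\psi$ by the constant multiple $b^j\psi$ (with $a^j>\sup_k u_{k+1}/u_k$) rather than by your $\ti\psi=\psi(Q\,\cdot\,)$, but the two modifications are interchangeable via the regularity inequality $b^j\psi\les\psi(a^j\,\cdot\,)\les\psi(Q\,\cdot\,)$, and the endgame---interpolating an arbitrary $T$ between consecutive $u_k$'s---is identical.
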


\begin{proof} 
We argue as in the proof of \cite[Theorem 3.8]{KS}, appealing to the uniform acceptability of the pair $(f,\psi)$ (instead of appealing to Lemma \ref{3.1and3.5}).     

\vspace{0.050in}   
\begin{itemize}
\item[\rm (i)] As in the proof of Theorem \ref{asymptoticmain}, we denote elements of $\operatorname{ASL}_n(\R)$ by $\langle h,\zz \rangle.$ For any $h \in \SL$, let $\|h\|$ denote the operator norm of $h$ 
given by {\equ{norm}}. 
Define $\pi : {\rm ASL}_n(\R) \to \SL$ and $\rho : {\rm ASL}_n(\R) \to \R^n$ by $\ds \pi : \langle h, \zz \rangle \mapsto h$ and $\ds \rho : \langle h, \zz \rangle \mapsto \zz.$ Notice that each of these maps is continuous. Let $K$ be a  nonempty compact subset of $G$ with $K = K^{-1}$, and define \[ { D_K := \sup \left\lbrace \|h\| : h \in \pi(K)  \right\rbrace < \infty \quad \quad \quad \text{and} \quad \quad \quad E_K := \sup \left\lbrace \nu(\zz) : \zz \in \rho(K) \right\rbrace < \infty.} \] Fix $L \in \N$ such that for each $k \in \Z_{\geq L}$ we have $u_k > 2 E_K.$ 
It follows from \equ{unifproperty} and \equ{conv} that $\ds \inf_{N \in \N}\sum_{k=N}^{\infty}  m\left(B_{f, \psi\left(u_k\right), \nu , {u_k}/{2D_K} }\right)^{1-r} < \infty.$ We then apply Theorem \ref{genericcounting}(iii) to obtain the following:~for almost every $g \in G$ there exists $M_{g} \in \Z_{\ge L}$ such that for each $k \in \Z$ with $k \geq M_g$ there exists some $\vv_{g, k} \in \mathcal{P}$ with \begin{equation}\label{somelabel} \nu\left(g \vv_{g, k}\right) \leq \frac{u_k}{2D_K} \quad \quad  \quad \text{and} \quad \quad \quad  \left|f\left(g\vv_{g, k}\right)\right| \les \psi\left(u_k\right).   
\end{equation} {For any such $\mu_G$-generic $g$ that belongs to $K$ and any integer $k \geq M_g,$ we have $\nu\left(\vv_{g, k}\right) \leq u_k$; this may be proved by appealing to the first inequality in \eqref{somelabel} and arguing as in the proof of \cite[Theorem 3.8(i)]{KS}. We thus conclude that for $\mu_G$-almost every $g \in K$ the function $f \circ g$ is $u_\bullet$-uniformly $(\psi, \nu, \mathcal{P})$-approximable. Since $G$ is $\sigma$-compact, the desired result follows. }

\smallskip

\item[\rm (ii)] Let $a = a(\psi) \in \R_{>1}$ and $b = b(\psi) \in \R_{>0}$ be as in Definition \ref{basicapdefns}. Fix $j \in \N$ for which $$ \sup\left\lbrace {u_{k+1} /u_k} : k \in \N \right\rbrace < a^j.$$ (This is possible because $u_\bullet$ is quasi-geometric.) Appealing once again to \equ{unifproperty} and \equ{conv}, we infer $\ds \inf_{N \in \N} \sum_{k=N}^{\infty}  (m\left(B_{f, b^j\psi\left(u_k\right), \eta, u_k}\right) ^{1-r} < \infty.$ Statement (i) of this theorem implies that for almost every $g \in G$ the function $f \circ g$ is $u_\bullet$-uniformly $\left(b^j\psi, \nu, \mathcal{P}\right)$-approximable. Now let $\ds h : \R^n \to \R^\ell$ be any function that is $u_\bullet$-uniformly $\left(b^j\psi, \nu, \mathcal{P}\right)$-approximable. Fix ${M} \in \N$ such that for each $k \in \Z_{\geq {M} }$ the set $\ds B_{h, b^j\psi(u_k), \nu, u_k} \cap \mathcal{P}$ is nonempty. Let $\ds T \in \left( u_{{M} +2}, + \infty\right)$ be arbitrary. Then there exists $i \in \Z_{\geq {M} +2}$ for which $\ds u_i \leq T \leq u_{i+1}.$ Note that there exists $\vv \in \mathcal{P}$ with $\nu(\vv) \leq u_i$ and $\ds |h(\vv)| \les b^j \psi(u_i).$ We then have $\nu(\vv) \leq u_i \leq T$ and  \begin{equation*} |h(\vv)| \les b^j \psi(u_i) \les b^j b^{-j} \psi\left(a^j u_i \right) = \psi\left(a^j u_i \right) \les \psi\left(u_{i+1}\right) \les \psi(T). \end{equation*}  This completes the proof. \end{itemize}  \end{proof} 

\begin{rmk} \rm 
The infimum in \equ{conv} is included because $\sum_{k=1}^{\infty}  m\left(B_{f, \psi\left(u_k\right), \eta, u_k}\right)^{1-r}$ may diverge for the trivial reason that there exist finitely many $k \in \N$ for which $m\left(B_{f, \psi\left(u_k\right), \eta, u_k}\right) = 0.$         
\end{rmk}    

\smallskip

\section{Applications of general results}\label{acc}

We begin with the following elementary observation.

\begin{lem}\label{local}
Let $\left(M, g_M\right)$ be an oriented $\mathscr{C}^1$ Riemannian manifold that is Hausdorff, second-countable, and without boundary. Let $\sigma$ denote the Borel measure on $M$ induced by the natural Riemannian volume form on $M$. Let $h : M \to \R^{\ell}$ be a $\mathscr{C}^1$ map, and suppose that $h^{-1}\left(\mathbf{0}_{\R^\ell}\right) \neq \varnothing.$ Let $z \in h^{-1}\left(\mathbf{0}_{\R^\ell}\right)$, and suppose that $z$ is a regular point of $h.$ Then there exist $C = C_z \in \R_{>1}$, an open subset $V = V_z$ of $\R^\ell$ with $\mathbf{0}_{\R^\ell} \in V$, and an open subset $W = W_z$ of $M$ with $z \in W$ such that for any Borel subset $E$ of $\R^\ell$ with $E \subseteq V,$ we have \begin{equation*}
C^{-1} \, m(E) \leq \sigma\big(W\cap h^{-1}(E)\big) \leq C \, m(E). 
\end{equation*}
In particular, $m|_{V}$ and the restriction to $V$ of the pushforward of $\sigma|_{W}$ by $h|_{W}$ are equivalent Borel measures.  \end{lem}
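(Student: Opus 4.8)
The plan is to reduce the statement to a local computation via the constant-rank (here, submersion) theorem. Since $z$ is a regular point of the $\mathscr{C}^1$ map $h : M \to \R^\ell$, the differential $D_z h : T_z M \to T_{h(z)}\R^\ell = \R^\ell$ is surjective; in particular $\dim M \geq \ell$. By the submersion form of the implicit function theorem, after choosing suitable $\mathscr{C}^1$ coordinates $(y_1,\dots,y_m)$ on a small open neighborhood $W_0$ of $z$ in $M$ (with $m = \dim M$) and observing that $h$ in these coordinates is, after shrinking, the standard projection $(y_1,\dots,y_m) \mapsto (y_1,\dots,y_\ell)$ onto a neighborhood of $\mathbf{0}_{\R^\ell}$. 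First I would fix such coordinates, let $\Phi : W_0 \to \Omega \subseteq \R^m$ be the corresponding chart with $\Phi(z) = \mathbf{0}$, choose a product box $P = V \times Q \subseteq \Omega$ (with $V$ an open cube around $\mathbf 0$ in $\R^\ell$ and $Q$ an open box in $\R^{m-\ell}$) so that on $\Phi^{-1}(P)$ the map $h$ becomes $\pi \circ \Phi$ with $\pi$ the coordinate projection, and set $W := \Phi^{-1}(P)$.

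Next I would unwind the two measures under this chart. The Riemannian volume measure $\sigma$ pushed forward by $\Phi$ is $\rho(y)\,\dint y$ on $P$, where $\rho = \sqrt{\det(g_{ij})}$ is a continuous, strictly positive function on the compact closure of a slightly smaller box; hence there is $C_1 \geq 1$ with $C_1^{-1} \leq \rho \leq C_1$ on (the relevant part of) $P$. For a Borel set $E \subseteq V$, the set $W \cap h^{-1}(E)$ corresponds under $\Phi$ to $(E \cap V) \times Q = E \times Q$, so
\begin{equation*}
\sigma\big(W \cap h^{-1}(E)\big) = \int_{E \times Q} \rho(y)\,\dint y,
\end{equation*}
and squeezing $\rho$ between the constants together with Fubini gives
\begin{equation*}
C_1^{-1}\, m_{m-\ell}(Q)\, m(E) \leq \sigma\big(W \cap h^{-1}(E)\big) \leq C_1\, m_{m-\ell}(Q)\, m(E),
\end{equation*}
where $m_{m-\ell}(Q) \in (0,\infty)$ is the Lebesgue measure of the box $Q$. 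Setting $C := \max\{2, C_1\,m_{m-\ell}(Q), (C_1^{-1}m_{m-\ell}(Q))^{-1}\} > 1$ yields the claimed two-sided bound $C^{-1} m(E) \leq \sigma(W \cap h^{-1}(E)) \leq C\, m(E)$ for all Borel $E \subseteq V$. The final ``in particular'' assertion is then immediate: the displayed inequalities say precisely that on $V$ the Borel measure $E \mapsto \sigma(W \cap h^{-1}(E)) = (h|_W)_*(\sigma|_W)(E)$ and $m|_V$ are mutually absolutely continuous, i.e.\ equivalent.

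The main obstacle is purely bookkeeping rather than conceptual: one must be careful to choose the neighborhoods in the right order — first the submersion chart, then the product box $V \times Q$ sitting inside the chart's image, then pass to a relatively compact sub-box on which $\rho$ is bounded above and below — and to keep track of the fact that the upper/lower bounds ultimately require $E$ to lie in $V$ (so that $h^{-1}(E) \cap W$ is genuinely $E \times Q$ and not something smaller). No delicate estimate is needed beyond continuity and positivity of the volume density and Fubini's theorem; the regularity hypothesis on $z$ is used exactly once, to invoke the submersion theorem.
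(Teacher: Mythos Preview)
Your proposal is correct and follows essentially the same strategy as the paper: straighten $h$ locally via the submersion/constant-rank theorem so that it becomes a coordinate projection, then use that the Riemannian volume density in the chart is continuous and positive (hence bounded above and below on a relatively compact box) together with Fubini to obtain the two-sided estimate.

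The only noteworthy difference is cosmetic. The paper invokes the Constant Rank Theorem in the form that introduces charts on \emph{both} sides, i.e.\ diffeomorphisms $\phi:(-\varepsilon,\varepsilon)^k\to M$ and $\Phi:\R^\ell\to\R^\ell$ with $\Phi\circ h\circ\phi=\pi_\ell$, and then takes $W=\phi\big((-\tfrac{\varepsilon}{2},\tfrac{\varepsilon}{2})^k\big)$ and $V=\Phi^{-1}\big((-\tfrac{\varepsilon}{2},\tfrac{\varepsilon}{2})^\ell\big)$; the extra target diffeomorphism $\Phi$ contributes a further bounded Jacobian factor in the final estimate. You instead use the submersion normal form with only a domain chart, so that $h$ is literally the projection in your coordinates and no target change of variables is needed. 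Your version is marginally cleaner for exactly this reason, but the underlying mechanism---compactness forcing uniform bounds on the change-of-variables density, plus Fubini on a product box---is identical.
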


\begin{proof} For the sake of clarity, we note that $m$ denotes Lebesgue measure on $\R^\ell$. Set $k := \dim M.$ Note that $k \geq \ell.$ By the Constant Rank Theorem \cite[Theorem 7.1]{Boo} 
there exist $\varepsilon \in \R_{>0}$ and maps $\phi:(-\e,\e)^k\to M$ and $\Phi:\R^\ell \to \R^\ell$ such that:

\begin{itemize} 
\item the set $\phi\left((-\e,\e)^k\right)$ is an open subset of $M$, and $\phi$ is a $\mathscr{C}^1$ diffeomorphism onto $\phi\left((-\e,\e)^k\right)$;
\item the set $\Phi\left(\R^\ell\right)$ is an open subset of $\R^\ell$, and $\Phi$ is a $\mathscr{C}^1$ diffeomorphism onto $\Phi\left(\R^\ell\right)$;
\item $\phi\left(\mathbf{0}_{\R^k}\right) = z$; and
\item $\Phi\circ h\circ\phi = {\pi_\ell}_{|(-\e,\e)^k}$, where $\pi_\ell : \R^k\to\R^\ell$ is given by $(x_1,\dots,x_k) \mapsto (x_1,\dots,x_\ell)$.
\end{itemize}
\vspace{0.02in} 
Set $\ds W = W_z := \phi\left( \left( -\frac{\varepsilon}{2}, \frac{\varepsilon}{2} \right)^k \right) \subseteq M.$ Then $W$ is an open subset of $M$ for which \[  z \in W \subset \overline{W} = \phi\left( \left[ -\frac{\varepsilon}{2}, \frac{\varepsilon}{2} \right]^k \right) \subset \phi\left((-\e,\e)^k\right) \subseteq M. \] Note that $\Phi\left(\mathbf{0}_{\R^\ell}\right) = \mathbf{0}_{\R^\ell}.$ Set $\ds V = V_z := \Phi^{-1}\left( \left( -\frac{\varepsilon}{2}, \frac{\varepsilon}{2} \right)^\ell \right) \subseteq \R^\ell.$ Then $V$ is an open subset of $\R^\ell$ for which \[ \mathbf{0}_{\R^\ell} \in V \subset \overline{V} = \Phi^{-1}\left( \left[ -\frac{\varepsilon}{2}, \frac{\varepsilon}{2} \right]^\ell \right) \subset \Phi^{-1}\left( \left( -\varepsilon, \varepsilon \right)^\ell \right) \subseteq \R^\ell. \]     

Let $E$ be an arbitrary Borel subset of $\R^\ell$ with $E \subseteq V.$ Then \[ W\cap h^{-1}(E) = \phi\left( \left( -\frac{\varepsilon}{2}, \frac{\varepsilon}{2} \right)^k  \, \cap \,  \pi_\ell^{-1}\big(\Phi(E)\big)\right). \] Since each of $\overline{V}$ and $\overline{W}$ is compact, each of $h$ and $\pi_\ell$ is of class $\mathscr{C}^1$, and each of $\phi$ and $\Phi$ is a $\mathscr{C}^1$ diffeomorphism from its domain onto its image, it follows that \[ \sigma\big(W\cap h^{-1}(E)\big) \asymp_{{g_M, \, z}} \, m\left(  \left( -\frac{\varepsilon}{2}, \frac{\varepsilon}{2} \right)^k  \, \cap \,  \pi_\ell^{-1}\big(\Phi(E)\big)\right) = \e ^{k-\ell} \, m\big(\Phi(E)\big) \asymp_{z} \, m(E). \] \qedhere   \end{proof}

Let us now use the preceding lemma to derive a global statement. 

\begin{thm}\label{submersionvol}
Let $\left(M, g_M\right)$ be an oriented $\mathscr{C}^1$ Riemannian manifold that is compact, Hausdorff, second-countable, and without boundary. Let $\sigma$ denote the Borel measure on $M$ induced by the natural Riemannian volume form on $M$. Let $h : M \to \R^{\ell}$ be a $\mathscr{C}^1$ map; suppose that $h^{-1}\left(\mathbf{0}_{\R^\ell}\right) \neq \varnothing$ and that every element of $h^{-1}\left(\mathbf{0}_{\R^\ell}\right)$ is a regular point of $h.$ Then there exists an open subset $V$ of $\R^\ell$ with $\mathbf{0}_{\R^\ell} \in V$ such that for any Borel subset $E$ of $\R^\ell$ with $E \subseteq V,$ we have  \[ {\sigma\big(  h^{-1}(E)\big)} \asymp_{M, {g_M}, h} \, {m(E)}. \]  \end{thm}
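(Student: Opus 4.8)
The plan is to reduce the global statement to the local statement of Lemma \ref{local} by a standard compactness argument. The key point is that $Z := h^{-1}\left(\mathbf{0}_{\R^\ell}\right)$ is a closed subset of the compact manifold $M$, hence is itself compact, and by hypothesis every point of $Z$ is a regular point of $h.$

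\smallskip

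First I would apply Lemma \ref{local} at each point $z \in Z$ to obtain a constant $C_z \in \R_{>1}$, an open neighborhood $V_z$ of $\mathbf{0}_{\R^\ell}$ in $\R^\ell$, and an open neighborhood $W_z$ of $z$ in $M$ such that $C_z^{-1} \, m(E) \leq \sigma\big(W_z \cap h^{-1}(E)\big) \leq C_z \, m(E)$ for every Borel $E \subseteq V_z.$ The collection $\{W_z : z \in Z\}$ is an open cover of the compact set $Z$, so we may extract a finite subcover $W_{z_1}, \dots , W_{z_N}.$ Set $C := \max\{C_{z_1}, \dots , C_{z_N}\}$ and $W := \bigcup_{i=1}^N W_{z_i}$, an open set containing $Z.$ Next, since $M \ssm W$ is compact and disjoint from $Z$, the continuous function $|h|$ attains a positive minimum $\rho$ on $M \ssm W$; hence if we choose the open neighborhood $V_0 := \bigcap_{i=1}^N V_{z_i} \cap \{ \yy \in \R^\ell : |\yy| < \rho \}$ of $\mathbf{0}_{\R^\ell}$, then for any $E \subseteq V_0$ we have $h^{-1}(E) \subseteq W$, so $h^{-1}(E) = \bigcup_{i=1}^N \big( W_{z_i} \cap h^{-1}(E)\big).$ Setting $V := V_0$, the upper bound $\sigma\big(h^{-1}(E)\big) \leq \sum_{i=1}^N \sigma\big(W_{z_i} \cap h^{-1}(E)\big) \leq N C \, m(E)$ follows by subadditivity, and the lower bound $\sigma\big(h^{-1}(E)\big) \geq \sigma\big(W_{z_1} \cap h^{-1}(E)\big) \geq C^{-1} \, m(E)$ follows by monotonicity (using any single patch). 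This gives $\sigma\big(h^{-1}(E)\big) \asymp_{M, g_M, h} m(E)$, as desired.

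\smallskip

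The main obstacle — really the only subtle point — is ensuring that $h^{-1}(E)$ is entirely contained in the finite union $W$ of the chosen patches once $E$ is taken small enough; this is exactly what the uniform lower bound $\rho > 0$ on $|h|$ over the compact complement $M \ssm W$ provides, and it is where compactness of $M$ is genuinely used (beyond compactness of $Z$). Everything else is bookkeeping: the implicit constants in the final $\asymp$ depend only on $M$, $g_M$, $h$ (through the finitely many $C_{z_i}$ and $N$), with no dependence on $E$.
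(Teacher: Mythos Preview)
Your proof is correct and follows essentially the same approach as the paper: apply Lemma \ref{local} at each point of $Z$, extract a finite subcover by compactness, and then shrink to a neighborhood $V$ of the origin small enough that $h^{-1}(V)$ lies entirely inside the union $W$ of the chosen patches. The only cosmetic difference is in how this last step is argued: you use the direct observation that $|h|$ attains a positive minimum on the compact set $M\ssm W$, whereas the paper runs the equivalent sequential compactness argument by contradiction; your version is slightly more efficient (you may want to note the trivial edge case $M\ssm W=\varnothing$).
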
  \begin{proof} 
Set $Z := h^{-1}\left(\mathbf{0}_{\R^\ell}\right) \neq \varnothing.$ For every $z \in Z,$ let $C_z \in \R_{>1},$ $V_z \subseteq \R^\ell$ with $\mathbf{0}_{\R^\ell} \in V_z,$ and $W_z \subseteq M$ with $z \in W_z$ be as in Lemma \ref{local}. Since $Z$ is compact, there exist finitely many $z_1, \dots , z_N \in Z$ such that $\ds Z \subseteq W := \bigcup_{i=1}^N W_{z_i}. $ Set $\ds U := \bigcap_{i=1}^N V_{z_i}$ and $\ds C := \sum_{i=1}^N C_{z_i}.$ Let $V$ be an open subset of $\R^\ell$ such that $\ds \mathbf{0}_{\R^\ell} \in V \subseteq U$ and $h^{-1}(V) \subseteq W.$ (We defer the proof of the existence of $V$ until the end of this theorem's proof.) If $E$ is any Borel subset of $\R^\ell$ with $E \subseteq V,$ then \[ C^{-1} \, m(E) \le \min \big\{\sigma\big(W_{z_i}\cap h^{-1}(E)\big): 1\le i \le N\big\} \le \sigma\big(  h^{-1}(E)\big)\le  \sum_{i=1}^N \sigma\big(W_{z_i}\cap h^{-1}(E)\big)\le C \, m(E). \] 

We now prove the existence of such a set $V$; suppose by way of contradiction that such a set did not exist. This would imply that for every open subset $U'$ of $\R^\ell$ with $\ds \mathbf{0}_{\R^\ell} \in U' \subseteq U$ there exists $\yy \in (M \ssm W)$ for which $h(\yy) \in U'$. Let $\|\cdot\|$ denote the Euclidean norm on $\R^\ell.$ Then for each $r \in \N$ there exists some $\xx_r \in (M \ssm W)$ for which $h(\xx_r) \in U$ and $\| h(\xx_r) \| < r^{-1}$. Since $M$ is compact, the sequence $\ds \left( \xx_r \right)_{r \in \N}$ has a convergent subsequence whose limit we denote by $\xx \in M.$ Since $W$ is an open subset of $M$, we have $\xx \in (M \ssm W)$; this implies $\xx \notin Z$, so that $h(\xx) \neq \mathbf{0}_{\R^\ell}.$ On the other hand, the sequence $\ds \left( h(\xx_r) \right)_{r \in \N}$ clearly converges to $\mathbf{0}_{\R^\ell}$. The continuity of $h$ then implies $h(\xx) =  \mathbf{0}_{\R^\ell}.$ This is a contradiction. \qedhere  \end{proof}

\begin{standing} { Let us state here the conventions that will be in force throughout the remainder of this paper. }
\begin{itemize}
\item We shall let $G$ denote an arbitrary closed subgroup of $\operatorname{ASL}_n(\R)$, and we shall let $\mathcal{P}$ denote an arbitrary subset of $\Z^n$. We shall let $\Gamma = \Gamma(G, \mathcal{P})$ be defined as in \equ{gamma}. As usual, we shall assume that $\Gamma$ is a lattice in $G.$

\item We shall assume that $G$ is of $\mathcal{P}$-Siegel type and that we are given $r \in \R_{>1}$ for which $G$ is of $\ds \left(\mathcal{P}, {r}\right)$-Rogers type.

\item We shall let $\nu$ denote an arbitrary norm on $\R^n.$ 
\end{itemize}  
\end{standing}

\smallskip

We now state and prove Theorem \ref{submerapprox}, of which Theorem \ref{simplesubmerapprox} is an immediate consequence. \begin{thm}\label{submerapprox}
Let $\ds \psi = (\psi_1, \dots , \psi_{\ell}) : \R_{\geq 0} \to \left(\R_{>0}\right)^{\ell}$ be regular and nonincreasing. Let $\ds f = (f_1, \dots , f_{\ell}) 
: \R^n \to \R^{\ell}$ be homogeneous of degree $\dd = \dd(f) =  (d_1, \dots, d_\ell)\in \left(\R_{>0}\right)^\ell.$ Suppose further that $f$ is continuously differentiable on $\R^n_{\neq 0}$, that $\mathcal{Z}(f) \neq \varnothing$, and that each element of $\mathcal{Z}(f)$ is a regular point of $f.$ 
Let ${\pmb \xi} \in \R^\ell.$ Set $d := \sum_{j=1}^\ell d_j.$ Then the following hold. 
\begin{itemize}
\item[ \rm (i)] If $\ds \int_1^{\infty} t^{n-(d+1)} \left(\prod_{j=1}^\ell \psi_j(t) \right) \dint t$ is finite $($respectively, infinite$)$,
then $\left({_{\pmb \xi}} f\right) \circ g$ is $\left(\psi, \nu, \mathcal{P}\right)$-approximable for Haar-almost no $($respectively, almost every$)$ $g \in G$. 
\item[\rm (ii)] Suppose that $d < n$ and that the infinite series $\ds \sum_{k=1}^{\infty}\left[ 2^{k(n-d)} \prod_{j=1}^\ell \psi_j\left(2^k\right) \right]^{1-r}$ converges. Then $\left({_{\pmb \xi}} f\right) \circ g$ is {\sl uniformly} $\left(\psi, \nu, \mathcal{P}\right)$-approximable for Haar-almost every $g \in G$.
\end{itemize}
\end{thm}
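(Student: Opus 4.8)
The strategy is to reduce Theorem~\ref{submerapprox} to the abstract Theorems~\ref{asymptoticmain} and~\ref{mainuniformresult} by verifying their hypotheses for the function $F := {_{\pmb\xi}} f$ and the given approximating function $\psi$. There are three things to check: that the pair $(F,\psi)$ is asymptotically acceptable (for (i)); that it is uniformly acceptable (for (ii)); and that the relevant volume quantities $m(A_{F,\psi,\nu})$ and $m(B_{F,\psi(T),\nu,T})$ have the sizes predicted by the stated integral and summatory conditions. The heart of the matter is the last point, and this is where Theorem~\ref{submersionvol} enters: applying it with $M$ the unit sphere $S^{n-1}$ (with respect to some auxiliary norm, say the Euclidean one), $\sigma$ the induced volume, and $h := f|_{S^{n-1}}$, we obtain an open neighborhood $V$ of $\mathbf{0}_{\R^\ell}$ on which $\sigma(h^{-1}(E)) \asymp m(E)$ for Borel $E \subseteq V$. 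The hypotheses of Theorem~\ref{submersionvol} are met because $S^{n-1}$ is compact and every element of $\mathcal Z(f)$ (equivalently, every zero of $h$ on $S^{n-1}$, using homogeneity and $d_j>0$) is a regular point of $f$, hence of $h$.

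\textbf{From the local estimate to a global volume computation.} Using homogeneity, I would write a point $\xx \in \R^n_{\neq 0}$ in polar form $\xx = t\,\theta$ with $t = \nu_0(\xx) > 0$ and $\theta \in S^{n-1}$, so that $f_j(\xx) = t^{d_j} f_j(\theta)$ and hence $|{_{\pmb\xi}}f(\xx)| \les \psi(t)$ becomes $|\xi_j - t^{d_j} f_j(\theta)| \le \psi_j(t)$ for all $j$. For fixed large $t$, as $\theta$ ranges over $S^{n-1}$ this says $(f_1(\theta),\dots,f_\ell(\theta))$ lies in a box of side lengths $\asymp t^{-d_j}\psi_j(t)$ centered at $(t^{-d_1}\xi_1,\dots,t^{-d_\ell}\xi_\ell)$, which tends to $\mathbf 0_{\R^\ell}$; thus for $t$ large this box sits inside $V$, and by Theorem~\ref{submersionvol} the $\sigma$-measure of the corresponding set of $\theta$ is $\asymp \prod_j t^{-d_j}\psi_j(t) = t^{-d}\prod_j \psi_j(t)$. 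Integrating against the radial Jacobian $t^{n-1}\,\dint t$ gives $m(\{\xx : \nu_0(\xx)\le T,\ |{_{\pmb\xi}}f(\xx)| \les \psi(\nu_0(\xx))\}) \asymp \mathrm{const} + \int_1^T t^{n-1-d}\prod_j\psi_j(t)\,\dint t$, which is exactly the integral in (i), and likewise $m(B_{F,\psi(T),\nu_0,T}) \asymp T^{n-d}\prod_j \psi_j(T)$ for large $T$. (Monotonicity and regularity of $\psi$ let one pass between $\int_1^\infty$ and $\sum_k$ over $t=2^k$, so condition~\equ{conv} with $u_k = 2^k$ is equivalent to the stated series in (ii).)

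\textbf{Acceptability.} For asymptotic acceptability it suffices, by Lemma~\ref{scalingonly}(i), to check that for the single norm $\nu_0$ and every $s\in\R_{>0}$ the finiteness of $m(A_{F,\psi,\nu_0})$ is equivalent to that of $m(A_{F,s\psi,\nu_0})$; but the polar computation above shows both are governed by the convergence of $\int_1^\infty t^{n-1-d}\prod_j \psi_j(t)\,\dint t$, which is insensitive to the constant $s$ (scaling $\psi_j$ by $s$ only rescales the box sides by $s$, hence the integrand by $s^\ell$). Similarly, Lemma~\ref{scalingonly}(ii) reduces uniform acceptability to a one-norm statement, and the polar computation gives $m(B_{F,s_3\psi(T),\nu_0,s_4T}) \asymp (s_4T)^{n-d}\prod_j \psi_j(T)$ with the constant absorbing $s_3^\ell$ and $s_4^{n-d}$, so the relevant $\liminf$/$\limsup$ of ratios are pinned strictly between $0$ and $\infty$. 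Here one must be slightly careful that changing $T$ to $s_4 T$ inside $\psi$ versus outside is handled by regularity of $\psi$, exactly as in Lemma~\ref{KSLemma3.2}; I would invoke that lemma to control $\psi(\cdot)$ under the bounded rescalings involved.

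\textbf{Conclusion and the main obstacle.} Granting the above, part (i) follows from Theorem~\ref{asymptoticmain}: convergence of the integral gives $m(A_{F,\psi,\nu_0})<\infty$, hence $F\circ g$ is not $(\psi,\nu,\mathcal P)$-approximable for a.e.\ $g$; divergence gives $m(A_{F,\psi,\nu_0})=\infty$, and since $G$ is of $(\mathcal P,r)$-Rogers type, Theorem~\ref{asymptoticmain}(ii) gives approximability for a.e.\ $g$. Part (ii) follows from Theorem~\ref{mainuniformresult} with $u_k = 2^k$ (which is quasi-geometric), since convergence of the stated series is, via the volume asymptotics and $r>1$, exactly condition~\equ{conv}. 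I expect the main obstacle to be the careful bookkeeping in the polar-coordinate volume estimate: one must justify uniformly over large $t$ that the relevant box lies in the fixed neighborhood $V$ (using $d_j>0$ so that $t^{-d_j}\xi_j \to 0$ and $t^{-d_j}\psi_j(t)$ is eventually small — the latter needing $\psi$ bounded near infinity, which follows from it being nonincreasing and positive), and one must handle the interface between the chosen auxiliary norm $\nu_0$ defining $S^{n-1}$ and the arbitrary norm $\nu$ in the statement — but this last point is precisely what asymptotic and uniform acceptability are designed to absorb.
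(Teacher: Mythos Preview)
Your proposal is correct and follows essentially the same route as the paper: restrict $f$ to the unit sphere, apply Theorem~\ref{submersionvol} to $h = f|_{S^{n-1}}$, compute the volumes of $A_{{_{\pmb\xi}}f,s\psi,\|\cdot\|}$ and $B_{{_{\pmb\xi}}f,s_1\psi(T),\|\cdot\|,s_2T}$ in polar coordinates, verify asymptotic and uniform acceptability via Lemma~\ref{scalingonly}, and then invoke Theorems~\ref{asymptoticmain} and~\ref{mainuniformresult} with $u_k=2^k$. The one step you should not leave as ``hence of $h$'' is the passage from regularity of $f$ to regularity of $h$ at points of $\mathcal Z(f)\cap S^{n-1}$: the paper uses Euler's identity $\langle \nabla f_j(\xx),\xx\rangle = d_j f_j(\xx) = 0$ at such points to see that each $\nabla f_j(\xx)$ is tangent to $S^{n-1}$, so $D_\xx h$ has the same image as $D_\xx f$ and is therefore surjective.
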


\begin{proof} We begin by attending to some preliminary matters.

Let $\sigma_n$ denote the unique $\operatorname{SO}(n)$-invariant Radon probability measure on $\mathbb{S}^{n-1} \subset \R^n.$ Let $\|\cdot\|$ denote the Euclidean norm on $\R^n.$ Define $h : \mathbb{S}^{n-1} \to \R^\ell$ to be the restriction of $f$ to $\mathbb{S}^{n-1}.$ 
Note that $h^{-1}\left(\mathbf{0}_{\R^\ell}\right) = \mathcal{Z}(f) \cap \mathbb{S}^{n-1} \neq \varnothing.$ Now let $\xx \in h^{-1}\left(\mathbf{0}_{\R^\ell}\right).$ 
For each $j \in \{1, \dots , \ell \},$ the homogeneity of $f_j$ implies that $\nabla f_j (\xx)$ is tangent to $\mathbb{S}^{n-1}$. It follows that $\xx$ is a regular point of $f : \R^n \to \R^\ell$ if and only if $\xx$ is a regular point of $h : \mathbb{S}^{n-1} \to \R^\ell.$ We thereby conclude that every element of $h^{-1}\left(\mathbf{0}_{\R^\ell}\right)$ is a regular point of $h$. Theorem \ref{submersionvol} may thus be applied to $h : \mathbb{S}^{n-1} \to \R^\ell.$ Let $V \subseteq \R^\ell$ be an open neighborhood of $\mathbf{0}_{\R^\ell}$ as in the conclusion of Theorem \ref{submersionvol}. 

Let us now introduce a pair of mutually inverse bijections that we shall use in this proof:
\begin{align}
&\R_{>0} \times \mathbb{S}^{n-1} \to \R^n_{\neq 0} \quad \quad \text{given by} \quad \quad (t, \mathbf{u}) \mapsto t\mathbf{u} \label{bijectone} \\
&\text{and} \nonumber \\ 
&\R^n_{\neq 0} \to \R_{>0} \times \mathbb{S}^{n-1}\quad \quad \text{given by} \quad \quad \mathbf{x} \mapsto \left( \|\mathbf{x}\|, \frac{\mathbf{x}}{\|\mathbf{x}\|} \right). \label{bijecttwo}
\end{align}

Finally, for each $t \in \R_{>0},$ define \[ g_t := \operatorname{diag}\left( t^{-d_1}, \dots , t^{-d_\ell}\right) \in \operatorname{GL}_\ell(\R). \]

\begin{itemize}
\item[(i)] Let $s \in \R_{>0}.$ We shall show that the pair $\left({_{\pmb \xi} f}, \psi\right)$ is asymptotically acceptable by first showing that 
\eq{checkasymp}{
m\left(A_{ _{\pmb \xi} f, s\psi, \|\cdot\|}\right) < \infty \quad \text{if and only if} \quad \int_1^{\infty} t^{n-(d+1)} \left(\prod_{j=1}^\ell \psi_j(t) \right) \dint t < \infty.  
} Using the bijections in \eqref{bijectone} and \eqref{bijecttwo} and the homogeneity of $f$, it follows that for each $t \in \R_{>0}$ and each $\mathbf{u} \in \mathbb{S}^{n-1}$ we have \begin{equation}\label{sphereiff} t\mathbf{u} \in A_{ _{\pmb \xi} f, s\psi, \|\cdot\|}\quad \text{if and only if}\quad g_t \big( {\pmb \xi} - s\psi({t}) \big)  \les h(\mathbf{u}) \les g_t \big( {\pmb \xi} + s \psi({t}) \big). 
\end{equation}  Using the boundedness of $\psi$, we now fix $M \in \R_{> 2}$ such that for each $t \in \R_{\geq M}$ we have \begin{equation*}
\left\lbrace \mathbf{w} \in \R^\ell :  g_t \big( {\pmb \xi} - s\psi({t}) \big)  \les \mathbf{w} \les g_t \big( {\pmb \xi} + s \psi({t}) \big)   \right\rbrace \subseteq V . \end{equation*} Theorem \ref{submersionvol} then implies that for each $t \in \R_{\geq M}$ we have  \begin{equation}\label{followsfrom}
\sigma_n\left( \left\lbrace \mathbf{u} \in \mathbb{S}^{n-1} : g_t \big( {\pmb \xi} - s\psi({t}) \big)  \les h(\mathbf{u}) \les g_t \big( {\pmb \xi} + s \psi({t}) \big)  \right\rbrace \right) \asymp_{n, \ell, f} \ t^{-d} \, \left(\left(2 s\right)^\ell \, \prod_{j=1}^\ell \psi_j\left(t\right)\right).
\end{equation} It then follows from \eqref{sphereiff} and \eqref{followsfrom} that \begin{align*} &m\left(\{ \xx \in A_{ _{\pmb \xi} f, s\psi, \|\cdot\|} :   \|\xx\| \geq M \}\right) \\
&\asymp_n \ \int_{M}^{{\infty}} \,  t^{n-1} \  \sigma_n\left( \left\lbrace \mathbf{u} \in \mathbb{S}^{n-1} : g_t \big( {\pmb \xi} - s\psi({t}) \big)  \les h(\mathbf{u}) \les g_t \big( {\pmb \xi} + s \psi({t}) \big)  \right\rbrace \right) \dint t \\
&\asymp_{n, \ell, f} \ \left(2   s\right)^\ell \int_{M}^{{\infty}} \,  t^{n-(d+1)} \, \left( \prod_{j=1}^\ell \psi_j\left(t\right) \right) \dint t.  \end{align*} 
Since $\psi$ is bounded, this proves \equ{checkasymp}. Lemma \ref{scalingonly}(i) then implies that the pair $\left({_{\pmb \xi} f}, \psi\right)$ is asymptotically acceptable. The desired result now follows from Theorem \ref{asymptoticmain}. 

\ignore{If $\ds m\left(A_{ _{\pmb \xi} f, s\psi, \|\cdot\|}\right) < \infty,$ then $\ds \sum_{k=1}^{\infty} m\left(\{ \xx \in A_{ _{\pmb \xi} f, s\psi({k+1}), \|\cdot\|} : k \leq \|\xx\| \leq {k+1} \}\right) < \infty$ because the function $s\psi$ is nonincreasing.  Now, using  the boundedness and the regularity of $\psi$,  take $k$ large enough so that for $t\ge k$ we have \begin{equation*}
\left\lbrace \mathbf{w} \in \R^\ell :  g_t \big( {\pmb \xi} - s\psi({k+1}) \big)  \les \mathbf{w} \les g_t \big( {\pmb \xi} + s \psi({k+1}) \big)   \right\rbrace \subseteq V  \end{equation*} 
and 
$$b\psi({t}) \les \psi(t+1),$$
where $b = b(\psi) \in \R_{>0}$ is as in Definition \ref{basicapdefns}. Then 

\begin{align*}&\geq \int_{k}^{{k+1}} \,  t^{n-(d+1)} \, \left(\left(2 b^i s\right)^\ell \, \prod_{j=1}^\ell \psi_j\big(a^{-i}\left(k+1\right)\big) \right) \dint t \\
&\geq \left(2 b^i s\right)^\ell \int_{k}^{{k+1}} \,  t^{n-(d+1)} \, \left( \prod_{j=1}^\ell \psi_j\left(t\right) \right) \dint t,  \end{align*} where the last inequality holds because $a^{-i} < 1/3.$ We thus conclude that if $\ds m\left(A_{ _{\pmb \xi} f, s\psi, \|\cdot\|}\right) < \infty,$ then $\ds \int_1^{\infty} t^{n-(d+1)} \left(\prod_{j=1}^\ell \psi_j(t) \right) \dint t < \infty.$    

\smallskip     

We now prove the other direction of the claimed equivalence. If $\ds m\left(A_{ _{\pmb \xi} f, s\psi, \|\cdot\|}\right) = \infty,$ then $\ds \sum_{k=1}^{\infty} m\left(\{ \xx \in A_{ _{\pmb \xi} f, s\psi({k}), \|\cdot\|} : k \leq \|\xx\| \leq {k+1} \}\right) = \infty.$ As before, it follows that for each sufficiently large $k \in \N$ we have  \begin{align*}
m\left(\{ \xx \in A_{ _{\pmb \xi} f, s\psi({k}), \|\cdot\|} : k \leq \|\xx\| \leq {k+1} \}\right) &\asymp_{n, \ell, f} \  \int_{k}^{{k+1}} \,  t^{n-1} \, t^{-d} \, \left(\left(2 s\right)^\ell \, \prod_{j=1}^\ell \psi_j\left(k\right) \right) \dint t \\
&\leq \int_{k}^{{k+1}} \,  t^{n-(d+1)} \, \left(\left(2 b^{-i} s\right)^\ell \, \prod_{j=1}^\ell \psi_j\left(a^{i}k\right) \right) \dint t \\
&\leq \left(2 b^{-i} s\right)^\ell   \int_{k}^{{k+1}} \,  t^{n-(d+1)} \, \left( \prod_{j=1}^\ell \psi_j\left(t\right) \right) \dint t.  \end{align*}  We conclude that if $\ds m\left(A_{ _{\pmb \xi} f, s\psi, \|\cdot\|}\right) = \infty,$ then $\ds \int_1^{\infty} t^{n-(d+1)} \left(\prod_{j=1}^\ell \psi_j(t) \right) \dint t = \infty.$

We have thus shown that \equ{checkasymp} holds for an arbitrary $s \in \R_{>0}$. Lemma \ref{scalingonly}(i) then implies that the pair $\left({_{\pmb \xi} f}, \psi\right)$ is asymptotically acceptable. The desired result now follows from Theorem \ref{asymptoticmain}.  }     

\vspace{0.1in}

\item[(ii)] Let us prove that the pair $\left({_{\pmb \xi} f}, \psi\right)$ is uniformly acceptable. Let $s_1, s_2 \in \R_{>0}$ be given. Arguing as in part (i), we infer that there exists $M \in \R_{>2}$ such that for each $T \in \R$ with $T> M/s_2$, we have      
\begin{align*}
&m\left( \left\lbrace \xx \in B_{{_{\pmb \xi} f}, s_1\psi(T), \|\cdot\|, {s_2T}} :  \|\xx\| \geq M \right\rbrace \right) \\  
&\asymp_n \int_{M}^{s_2T} \,  t^{n-1} \  \sigma_n\left( \left\lbrace \mathbf{u} \in \mathbb{S}^{n-1} :  g_t\left( {\pmb \xi} - s_1\psi(T) \right)  \les f(\mathbf{u}) \les g_t\left( {\pmb \xi} + s_1\psi(T) \right)  \right\rbrace \right) \dint t \\
&\asymp_{n, \ell, f} \ \int_{M}^{s_2 T} \,  t^{n-1} \, t^{-d} \, \left( \left(2s_1\right)^\ell \, \prod_{j=1}^\ell \psi_j(T) \right) \dint t \\
&\asymp_{\ell, s_1} \ \int_{M}^{s_2 T} \,  t^{n-(d+1)} \, \left( \prod_{j=1}^\ell \psi_j(T) \right) \dint t \\  
&=  \left[\left(s_2 T \right)^{n-d} - M^{n-d}  \right] \left( \prod_{j=1}^\ell \psi_j(T) \right) \\
&\asymp_{n, d, s_2, M} \ T^{n-d} \, \prod_{j=1}^\ell \psi_j(T).   
\end{align*} It follows that  \[ 0 < \liminf_{T \to \infty} \frac{T^{n-d} \,  \prod_{j=1}^\ell \psi_j\left(T\right)}{m\left(B_{{_{\pmb \xi} f}, s_1\psi\left(T \right), \|\cdot\|, {s_2 T}} \right)} \leq \limsup_{T \to \infty} \frac{T^{n-d} \,  \prod_{j=1}^\ell \psi_j\left(T\right)}{m\left(B_{{_{\pmb \xi} f}, s_1\psi\left(T \right), \|\cdot\|, {s_2 T}} \right)} < \infty.  \] 

\vspace{0.025in}

\noindent Lemma \ref{scalingonly}(ii) now implies that the pair $\left({_{\pmb \xi} f}, \psi\right)$ is uniformly acceptable. It is clear from the preceding work that \begin{equation*}
\inf_{N \in \N}\sum_{k=N}^{\infty} m\left( B_{{_{\pmb \xi} f}, \psi\left( 2^k \right), \|\cdot\|, {2^k}} \right)^{1-r} < \infty \quad \text{if and only if} \quad \sum_{k=1}^{\infty}\left[ 2^{k(n-d)} \prod_{j=1}^\ell \psi_j\left(2^k\right) \right]^{1-r} < \infty. \end{equation*} The desired result now follows from Theorem \ref{mainuniformresult}. \end{itemize} \qedhere\end{proof}

We shall now consider examples of $f : \R^n \to \R$ that do not satisfy the nonsingularity hypotheses of Theorem \ref{submerapprox}. Since the measure estimates furnished by Theorem \ref{submersionvol} are no longer available in this setting, we shall instead use some \textit{ad hoc} measure calculations that were performed in the authors' previous paper:~see \cite[Corollaries 4.2 and 4.3]{KS}. In what follows, for each $i \in \Z_{\geq 0}$, we write $\log^i$ to denote the function $\R_{>0} \to \R$ given by $t \mapsto \left(\log t\right)^i$; in particular, $\log^0$ denotes the constant function that is equal to $1$ everywhere on $\R_{>0}.$ Our first example expands upon \cite[Corollary 4.2]{KS}; in that corollary, we considered the function $f : \R^n \to \R$  given by \eq{product}{f(\xx) := \prod_{i=1}^n \left| x_i \right| } and essentially proved the following result. 

\begin{lem}[{\cite[Corollary 4.2(i)]{KS}}]\label{lemmaprodcoord}  Let $f : \R^n \to \R$ be as in \equ{product}, let ${\overline{\psi}} : \R_{\geq 0} \to \R_{>0}$ be bounded and Borel measurable, and let $\eta$ denote the maximum norm on $\R^n.$ Then there exists $R = R \left({\overline{\psi}}, n \right) \in \R_{\geq 1}$ such that for every Borel measurable function $\psi : \R_{\geq 0} \to \R_{>0}$ with $\psi \leq {\overline{\psi}}$ on $\R_{\geq 0}$ and any real numbers $S$ and $T$ with $R \leq S \leq T,$ we have 
\begin{equation}
m\left(A_{f, \psi, \eta} \cap \{\xx \in \R^n : {S} \leq\eta(\xx) \leq {T} \} \right) = 2^n \, n \int_{S}^{T} \frac{\psi({t})}{{t}}
 \left[ \sum_{i=0}^{n-2} \frac{1}{i!} \log^i\left(\frac{{t}^n}{\psi({t})}\right)\right] \dint t. \end{equation} 
 \end{lem}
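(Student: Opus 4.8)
The plan is to compute the displayed measure by exploiting the symmetries of $f$ and $\eta$ and reducing to a one-parameter family of elementary volume computations inside a cube. First I would discard the $m$-null union of coordinate hyperplanes $\{x_i = 0\}$ and work in the open positive orthant $\left(\R_{>0}\right)^n$; since both $f$ and $\eta$ are invariant under all $2^n$ sign changes of the coordinates, this accounts for the factor $2^n$. Within $\left(\R_{>0}\right)^n$, and again up to an $m$-null set (namely the set where two coordinates agree), I would partition according to which single coordinate realizes the maximum $\eta(\xx) = \max_i x_i$. By the invariance of $f$ and $\eta$ under coordinate permutations, each of the $n$ resulting pieces has the same measure, so it suffices to compute
\[
m\Big(\big\{\xx \in \left(\R_{>0}\right)^n : x_n = \eta(\xx),\ S \le x_n \le T,\ \textstyle\prod_{i=1}^n x_i \le \psi(x_n)\big\}\Big)
\]
and multiply the result by $2^n n$.

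Next I would apply Fubini in the variable $t := x_n$. For fixed $t \in [S, T]$ the corresponding slice is $\big\{(x_1, \dots, x_{n-1}) \in (0, t]^{n-1} : \prod_{i=1}^{n-1} x_i \le \psi(t)/t\big\}$, and rescaling each of the first $n-1$ coordinates by $t$ identifies its $(n-1)$-dimensional Lebesgue measure with $t^{n-1} V_{n-1}\big(\psi(t)/t^n\big)$, where $V_k(a) := m\big(\{\uu \in (0,1]^k : \prod_{i=1}^k u_i \le a\}\big)$. The constant $R = R(\overline{\psi}, n)$ enters precisely at this point: fixing a bound $B$ with $\overline{\psi} \le B$ and setting $R := \max\{1, B^{1/n}\} + 1$, one has $\psi(t)/t^n \le B/t^n < 1$ for every $\psi \le \overline{\psi}$ and every $t \ge R \ge S$, which places us in the nontrivial regime of $V_k$.

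It then remains to establish the closed form
\[
V_k(a) = a \sum_{i=0}^{k-1} \frac{(-\log a)^i}{i!} \qquad (0 < a \le 1),
\]
which I would prove by induction on $k$: the base case $k = 1$ is immediate, and the inductive step follows by writing $V_k(a) = \int_0^1 V_{k-1}(a/u)\,\dint u$, noting that $V_{k-1}(a/u) = 1$ on $(0, a]$, and integrating the resulting polynomial in $\log u$ term by term after the substitution $s = \log u - \log a$. Taking $k = n-1$ and $a = \psi(t)/t^n$ gives
\[
t^{n-1} V_{n-1}\big(\psi(t)/t^n\big) = \frac{\psi(t)}{t}\sum_{i=0}^{n-2}\frac{1}{i!}\,\log^i\!\left(\frac{t^n}{\psi(t)}\right),
\]
and integrating over $t \in [S, T]$ and inserting the factor $2^n n$ yields the claimed identity. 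I do not anticipate a genuine obstacle here: the only points requiring care are the measurability of the sets involved, the verification that the two symmetry reductions do not over-count (the relevant overlap sets being $m$-null), and the uniform choice of $R$; the remaining integrations are routine. This is consistent with the result being, as indicated, essentially a restatement of \cite[Corollary 4.2(i)]{KS}.
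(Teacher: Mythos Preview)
Your proposal is correct and carries out in full the computation that the paper merely cites from \cite[Corollary 4.2(i)]{KS}; the paper itself does not reprove the lemma but only remarks that inspecting that proof shows monotonicity of $\psi$ is unnecessary and that one may take $R = 1 + \big(\sup_{t\ge 0}\overline{\psi}(t)\big)^{1/n}$. Your symmetry reduction, Fubini in $t = x_n$, rescaling to the cube, and the inductive evaluation of $V_k(a)$ together constitute exactly that underlying argument, and your choice $R = \max\{1, B^{1/n}\} + 1$ agrees with the paper's value (and is never smaller), so the uniformity in $\psi \le \overline{\psi}$ is handled just as the paper indicates.
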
  
\noindent We remark that, strictly speaking, an application of \cite[Corollary 4.2(i)]{KS} would require the function $\psi$ to be nonincreasing (see \cite[Remark 4.4(iii)]{KS}) and would provide a value of $R$ dependent on $\psi.$ That being said, an inspection of said corollary's proof shows that only the boundedness and Borel measurability of $\psi$ are required and not its monotonicity; this inspection furthermore shows that one may choose \[ R = R\left({\overline{\psi}}, n \right) := 1 + \left( \sup\left\lbrace {\overline{\psi}}(t) : t \in \R_{\geq 0 }\right\rbrace  \right)^{1/n}.  \]

\smallskip

We now consider a generalized version of the function in \equ{product}, given by raising that function to an arbitrary power $\omega \in \R_{>0}.$ 

\begin{thm}\label{productcoord}
Let $\psi : \R_{\geq 0} \to \R_{>0}$ be  regular and nonincreasing. Let $\omega \in \R_{>0}$ be arbitrary. Let $f : \R^n \to \R$ be given by $f(\xx) := \left(\prod_{i=1}^n \left| x_i \right| \right)^\omega.$ Let $\xi \in \R_{\geq 0}.$ Then the following hold.      
\vspace{0.05in}   
\begin{itemize}       
\item[(i)]  If \[ \begin{cases}
 { \ds \int_1^{\infty} \frac{\psi(t)^{1/\omega}}{t}\log^{n-2}\left(t\right) \dint t }  &  \text{ if } \xi = 0  \\ { \ds \int_1^{\infty} \frac{\psi(t)}{t} \log^{n-2}\left(t\right) \dint t }  &  \text{ if } \xi > 0 \end{cases} \] is finite $($respectively, infinite$)$,
then $\left({_{ \xi}} f\right) \circ g$ is $\left(\psi, \nu, \mathcal{P}\right)$-approximable for Haar-almost no $($respectively, almost every$)$ $g \in G$. 
\vspace{0.05in}  
\item[(ii)] If the infinite series \[ \begin{cases}
{ \ds \sum_{k=1}^{\infty}\left[ k^{n-1}  \psi\left(2^k\right) ^{1/\omega} \right]^{1-r} }   &  \text{ if } \xi = 0  \\ { \ds \sum_{k=1}^{\infty}\left[ k^{n-1} \psi\left(2^k\right) \right]^{1-r} }  &  \text{ if } \xi > 0 \end{cases} \] converges, then $\left({_{ \xi}} f\right) \circ g$ is {\sl uniformly} $\left(\psi, \nu, \mathcal{P}\right)$-approximable for Haar-almost every $g \in G$.
\end{itemize} \end{thm}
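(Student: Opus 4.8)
The plan is to derive both parts from Theorems \ref{asymptoticmain} and \ref{mainuniformresult} after checking that the pair $({}_\xi f,\psi)$ is asymptotically and uniformly acceptable and identifying the convergence behavior of the associated volumes, which I would reduce to Lemma \ref{lemmaprodcoord} applied to the unpowered product $f_0(\xx):=\prod_{i=1}^n|x_i|$. Throughout I would take $\eta$ to be the maximum norm on $\R^n$ (legitimate by Lemma \ref{scalingonly}), record that regularity and monotonicity of $\psi$ force a polynomial lower bound $\psi(t)\gg_\psi t^{-\alpha}$ for some $\alpha>0$, so that $\log\!\big(t^n/\psi(t)^{1/\omega}\big)\asymp_\psi\log t$ for large $t$, and use the telescoping identity that, for $g_t(c):=c\sum_{i=0}^{n-2}\tfrac{1}{i!}\log^i(t^n/c)$, one has $\tfrac{d}{dc}g_t(c)=\tfrac{1}{(n-2)!}\log^{n-2}(t^n/c)$.

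In the case $\xi=0$, for any $s>0$ the condition $|{}_0 f(\xx)|\les s\psi(\eta(\xx))$ is equivalent to $f_0(\xx)\le\big(s\psi(\eta(\xx))\big)^{1/\omega}$, so $A_{{}_0 f,\,s\psi,\,\eta}=A_{f_0,\,(s\psi)^{1/\omega},\,\eta}$, with $(s\psi)^{1/\omega}$ again bounded, Borel, regular, and nonincreasing; an application of Lemma \ref{lemmaprodcoord} with $\overline\psi:=(s\sup\psi)^{1/\omega}$, keeping the dominant $i=n-2$ term, yields $m\big(A_{{}_0 f,\,s\psi,\,\eta}\cap\{S\le\eta\le T\}\big)\asymp_{n,\psi,s}\int_S^T t^{-1}\psi(t)^{1/\omega}\log^{n-2}t\,\dint t$ for $R\le S\le T$ and, with $\psi(T)^{1/\omega}$ frozen in the integrand, $m\big(B_{{}_0 f,\,s_1\psi(T),\,\eta,\,s_2T}\big)\asymp_{n,\psi,s_1,s_2}\psi(T)^{1/\omega}(\log T)^{n-1}$. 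The integral test and Lemma \ref{scalingonly} then give asymptotic and uniform acceptability, the dyadic series $\sum_k[k^{n-1}\psi(2^k)^{1/\omega}]^{1-r}$ controls condition \equ{conv} (with $u_k:=2^k$, which is quasi-geometric), and Theorems \ref{asymptoticmain} and \ref{mainuniformresult} finish the case $\xi=0$.

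In the case $\xi>0$, fix $s>0$. If $s\psi(t)\ge\xi$ for every $t$, then $\psi$ being bounded below makes both the displayed integral and the displayed series diverge, while the inclusion $A_{{}_0 f,\,\min(s\psi,\xi),\,\eta}\subseteq A_{{}_\xi f,\,s\psi,\,\eta}$ and the $\xi=0$ analysis supply the approximability conclusions; so assume $s\psi(t)<\xi$ for all large $t$. Then for $\eta(\xx)$ large the condition $|{}_\xi f(\xx)|\les s\psi(\eta(\xx))$ reads $f_0(\xx)\in\big[(\xi-s\psi(\eta(\xx)))^{1/\omega},\,(\xi+s\psi(\eta(\xx)))^{1/\omega}\big]$, so, up to a Lebesgue-null set and away from a bounded region, $A_{{}_\xi f,\,s\psi,\,\eta}$ agrees with $A_{f_0,\,(\xi+s\psi)^{1/\omega},\,\eta}\setminus A_{f_0,\,(\xi-s\psi)^{1/\omega},\,\eta}$. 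Subtracting the two corresponding instances of Lemma \ref{lemmaprodcoord} and using the telescoping identity, the integrand difference at height $t$ becomes $\tfrac{2^n n}{(n-2)!}\,t^{-1}\!\int_{(\xi-s\psi(t))^{1/\omega}}^{(\xi+s\psi(t))^{1/\omega}}\log^{n-2}(t^n/c)\,\dint c$, which by the mean value theorem is $\asymp_{n,\xi,\omega,s}\,t^{-1}\psi(t)\log^{n-2}t$, since the relevant $c$ range over a fixed compact subinterval of $(0,\infty)$ on which $\log(t^n/c)\asymp\log t$. Hence $m\big(A_{{}_\xi f,\,s\psi,\,\eta}\cap\{S\le\eta\le T\}\big)\asymp_{n,\xi,\omega,s}\int_S^T t^{-1}\psi(t)\log^{n-2}t\,\dint t$ and $m\big(B_{{}_\xi f,\,s_1\psi(T),\,\eta,\,s_2T}\big)\asymp_{n,\xi,\omega,s_1,s_2}\psi(T)(\log T)^{n-1}$, and the conclusion follows exactly as for $\xi=0$, now with the series $\sum_k[k^{n-1}\psi(2^k)]^{1-r}$.

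The main obstacle is the $\xi>0$ volume estimate: one is subtracting two asymptotically equal quantities $g_t\big((\xi+s\psi)^{1/\omega}\big)$ and $g_t\big((\xi-s\psi)^{1/\omega}\big)$ and must track the cancellation accurately enough to recover the factor $\psi(t)$ — rather than $\psi(t)^{1/\omega}$ — together with the correct logarithmic power $\log^{n-2}t$; the telescoping identity $g_t'(c)=\tfrac{1}{(n-2)!}\log^{n-2}(t^n/c)$ is precisely what makes this tractable. The remaining points requiring care are the uniformity of the implied constants in $S$ and $T$ (which is why Lemma \ref{lemmaprodcoord} is stated with a $\psi$-independent $R$), the possibly $\psi$-dependent implied constants in the $B$-volume estimates when $\psi$ decays rapidly (harmless since $\psi$ is fixed), and the degenerate regime $s\psi\ge\xi$.
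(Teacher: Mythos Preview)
Your proposal is correct and follows essentially the same route as the paper: reduce to the unpowered product $f_0$, invoke Lemma \ref{lemmaprodcoord}, subtract the two instances when $\xi>0$, verify acceptability via Lemma \ref{scalingonly}, and conclude by Theorems \ref{asymptoticmain} and \ref{mainuniformresult}. Two minor differences are worth noting: the paper first reduces to $\lim_{t\to\infty}\psi(t)=0$ (so that $s\psi(t)<\xi$ eventually for every $s$), whereas you treat the degenerate regime $s\psi\ge\xi$ directly; and where the paper appeals somewhat informally to ``two first-order Taylor approximations'' to extract the cancellation, your telescoping identity $g_t'(c)=\tfrac{1}{(n-2)!}\log^{n-2}(t^n/c)$ together with the mean value theorem makes that step cleaner and more explicit.
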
  

\smallskip                

\begin{proof}
If $\ds \lim_{t \to \infty} \psi(t) > 0$, then note that we can easily construct a function $\varphi : \R_{\geq 0} \to \R_{>0}$ that is regular, nonincreasing, satisfies $\ds \lim_{t \to \infty} \varphi(t) = 0$, and for which the following hold:~when $\psi$ is replaced by $\varphi$ in Theorem \ref{productcoord}, the integral in (i) diverges and the infinite series in (ii) converges. The conclusions of (i) and (ii) will then follow for $\varphi$; we then infer that the conclusions of (i) and (ii) follow for $\psi$ as well. We therefore assume without loss of generality that $\ds \lim_{t \to \infty} \psi(t) = 0.$ In concert with the regularity and monotonicity of $\psi$, this implies that there exists some $\lambda = \lambda(\psi) \in \R_{>0}$ such that for every sufficiently large $t \in \R_{\geq 1}$, we have \begin{equation}\label{regularpsilog}  1 \leq -\log\big(\psi(t)\big) \leq \lambda\log(t). \end{equation}

Let us first discuss the case $\xi = 0$, which was the subject of \cite[Corollary 4.2]{KS}. Let us assume that $\omega = 1$, since the general case $\xi = 0$ reduces to this particular sub-case. Even though the integral and summatory conditions in that corollary and those in the $\xi = 0$ case of this theorem look different from one another, they are actually equivalent. Indeed, \eqref{regularpsilog} implies that the integral \[ \int_1^{\infty} \frac{\psi({t})}{{t}}  \log^{n-2} \left(\frac{{t}^n}{\psi({t})} \right) \dint{t}\] in \cite[Corollary 4.2(ii)]{KS} converges if and only if the integral $\ds \int_1^{\infty} \frac{\psi(t)}{t}\log^{n-2}\left(t\right) \dint t $ converges. Likewise, the infinite series in \cite[Corollary 4.2(iii)]{KS}, converges if and only if the infinite series $\ds \sum_{k=1}^{\infty}\left[ k^{n-1} \psi\left(2^k\right) \right]^{1-r}$ converges. The $\xi = 0$ case of this theorem then follows from the preceding work and \cite[Corollary 4.2]{KS}.  

\medskip

Suppose now that $\xi > 0.$ Let $\eta$ denote the maximum norm on $\R^n.$ 

\smallskip

\begin{itemize}
\item[(i)] Let $s \in \R_{>0}$ be given. Define $\overline{\psi} : \R_{\geq 0} \to \R_{>0}$ by $\overline{\psi}(t) := \left( \xi + s\psi \right)^{1/\omega}$. Let $R = R(\overline{\psi}, n)$ be as in Lemma \ref{lemmaprodcoord}. Fix $M \in \R$ for which $M > R$ and $\ds s\psi(M) < \xi/2.$ Then for each $t \in \R_{\geq M}$, we have $\ds s\psi(t) < \xi/2.$ Define $\underline{\psi} : \R_{\geq 0} \to \R_{>0}$ by \[ \underline{\psi}(t) := \begin{cases}
 { \ds \overline{\psi}(t) }  &  \text{ if } t \in [0, M)  \\ { \ds \left( \xi - s\psi(t)\right)^{1/\omega} }  &  \text{ if } t \in [M, \infty) \end{cases}. \] Let $h : \R^n \to \R$ be given by $h(\xx) := \prod_{i=1}^n \left| x_i \right|.$  Note that  \begin{equation}\label{preprodsubtract}   \left( A_{{{h}}, \overline{\psi}, \eta} \smallsetminus A_{{{h}}, \underline{\psi}, \eta}\right) \cap \eta^{-1}\left(\R_{\geq M}\right) \subseteq A_{{_{ \xi} {f}}, s\psi, \eta} \cap \eta^{-1}\left(\R_{\geq M}\right)
 \end{equation} and \begin{equation}\label{prodsubtract}  
 m\Bigg( \bigg(A_{{_{ \xi} {f}}, s\psi, \eta} \cap \eta^{-1}\left(\R_{\geq M}\right) \bigg) \smallsetminus \bigg( \left( A_{{{h}}, \overline{\psi}, \eta} \smallsetminus A_{{{h}}, \underline{\psi}, \eta}\right) \cap \eta^{-1}\left(\R_{\geq M}\right) \bigg) \Bigg) = 0.  \end{equation}  Now fix any real numbers $S$ and $T$ with $M \leq S \leq T.$ Using Lemma \ref{lemmaprodcoord}, it follows that \begin{equation}\label{prodover} m\left( A_{{{h}}, \overline{\psi}, \eta}  \cap \left\lbrace \xx \in \R^n : S \leq \eta(\xx)  \leq T \right\rbrace \right) =    2^n \, n \int_{S}^{T} \frac{\big( \xi + s\psi(t)\big)^{1/\omega}}{{t}}
 \left[ \sum_{i=0}^{n-2} \frac{1}{i!} \log^i\left(\frac{{t}^n}{\big( \xi + s\psi(t)\big)^{1/\omega}}\right)\right] \dint t
\end{equation} and  \begin{equation}\label{produnder}  m\left( A_{{{h}}, \underline{\psi}, \eta}  \cap \left\lbrace \xx \in \R^n : S \leq \eta(\xx)  \leq T \right\rbrace \right) 
 =    2^n \, n \int_{S}^{T} \frac{\big( \xi - s\psi(t)\big)^{1/\omega}}{{t}}
 \left[ \sum_{i=0}^{n-2} \frac{1}{i!} \log^i\left(\frac{{t}^n}{\big( \xi - s\psi(t)\big)^{1/\omega}}\right)\right] \dint t.  
\end{equation} 
It is easy to see that \begin{equation}\label{integralconditionone}
 m\left( A_{{_{ \xi} f}, s\psi, \eta}\right) = \infty \quad \quad \quad \quad \text{if and only if} \quad \quad \quad \quad \int_{1}^{\infty} \frac{\psi(t)}{t} \log^{n-2}\left(t\right) \dint t = \infty. 
\end{equation} This follows from \eqref{preprodsubtract} and \eqref{prodsubtract}, subtracting the right-hand side of \eqref{produnder} from that of \eqref{prodover}, using \eqref{regularpsilog} and the dominance of $\log^{n-2}$, and then performing two first-order Taylor approximations. Note that the integral criterion in \eqref{integralconditionone} is independent of $s \in \R_{>0}.$ Lemma \ref{scalingonly}(i) then implies that the pair $\left({_{ \xi} f}, \psi\right)$ is asymptotically acceptable. The desired result now follows from the foregoing work and Theorem \ref{asymptoticmain}.   

\vspace{0.05in}

\item[(ii)] Let $s_1, s_2 \in \R_{>0}$ be given. Set $J := 1 + \psi(0).$ Arguing as in part (i), we infer that there exists $M \in \R_{> 2}$ such that for each $T \in \R$ with $T > M/s_2$, we have 
\begin{align*}
m\left( \left\lbrace \xx \in B_{{_{ \xi} f}, {s_1 \psi(T)}, \eta, {s_2 T}} :  \eta(\xx) \geq M \right\rbrace \right) 
&\asymp_{n, J, M, s_1, s_2} \, \int_M^{s_2 T} \psi(T) \ t^{-1} \, \log^{n-2}(t) \, \dint t  \\
\vspace{0.025in}  
&= \psi(T) \, \left[ \log^{n-1}\left(s_2 T \right) - \log^{n-1}(M) \right] \\
\vspace{0.025in}   
&
\asymp_{n, M, s_2} \, \psi(T) \, \log^{n-1}(T). 
\end{align*} 
It follows that \[ 0 < \liminf_{T \to \infty} \frac{ \psi(T) \, \log^{n-1}(T) }{m\left(B_{{_{\xi} f}, s_1\psi(T), \eta, {s_2 T}} \right)} \leq \limsup_{T \to \infty} \frac{ \psi(T) \, \log^{n-1}(T) }{m\left(B_{{_{\xi} f}, s_1\psi(T), \eta, {s_2 T}} \right)} < \infty.  \] Lemma \ref{scalingonly}(ii) then implies that the pair $\left({_{ \xi} f}, \psi\right)$ is uniformly acceptable. It is clear from the foregoing work that \begin{equation*}
\inf_{N \in \N}\sum_{k=N}^{\infty} m\left( B_{{_{\xi} f}, \psi\left( 2^k \right), \eta, {2^k}} \right)^{1-r} < \infty \quad \text{if and only if} \quad \sum_{k=1}^{\infty}\left[ k^{n-1} \psi\left(2^k\right) \right]^{1-r} < \infty. \end{equation*} 
{An application of Theorem \ref{mainuniformresult} then yields the desired result.}
\end{itemize} 
 \end{proof}       

\begin{rmk} \rm { It is easy to see that one may modify the preceding proof to obtain a similar result when $\xi \in \R$ and $f : \R^n \to \R$ is a function of the form  $f(\xx) := \left(\prod_{i=1}^n x_i \right)^{q_1/q_2},$ where  $q_1$ and $q_2$ are arbitrary odd natural numbers. } 
\end{rmk}

Our next and final example expands upon \cite[Corollary 4.3]{KS} and is of interest because of its relation to the Khintchine\textendash Groshev
Theorem. 

\begin{thm}\label{khintchine} Let $\psi : \R_{\geq 0} \to \R_{>0}$ be  regular and nonincreasing. Let $p \in \{1, \dots , n - 1 \}$ and $\mathbf{z} = (z_1, \dots , z_p) \in \left(\R_{>0}\right)^p$ be given. Let $f: \R^n \to \R$ be given by \[ f(x_1, \dots , x_n) := \max\left\lbrace |x_i|^{z_i}: 1 \leq i \leq p\right\rbrace. \] Set $z := \sum_{i=1}^p  \left(z_i\right)^{-1}.$ Let $\xi \in \R_{\geq 0}.$ Then the following hold.    
\vspace{0.05in}     
\begin{itemize}
\item[ \rm (i)] If \[ \begin{cases}
 { \ds \int_1^{\infty} \psi(t)^z  {t}^{n - (p+1)} \ \dint t  }  &  \text{ if } \xi = 0  \\ { \ds \int_1^{\infty} \psi(t) \, {t}^{n - (p+1)} \ \dint t  }  &  \text{ if } \xi > 0 \end{cases} \] is finite $($respectively, infinite$)$,
then $\left({_{\xi}} f\right) \circ g$ is $\left(\psi, \nu, \mathcal{P} \right)$-approximable for Haar-almost no $($respectively, almost every$)$ $g \in G.$
\vspace{0.05in}
\item[\rm (ii)] If the infinite series \[ \begin{cases}
 { \ds \sum_{k = 1}^\infty 
      \left[ 2^{(n-p)k} \psi\left(2^k\right) ^z \right]^{1-r}}   &  \text{ if } \xi = 0  \\ { \ds \sum_{k = 1}^\infty 
      \left[ 2^{(n-p)k} \, \psi\left(2^k\right) \right]^{1-r}}  &  \text{ if } \xi > 0 \end{cases} \] converges, then $\left({_{\xi}} f\right) \circ g$ is {\sl uniformly} $\left(\psi, \nu, \mathcal{P} \right)$-approximable for Haar-almost every $g \in G.$ \end{itemize}  \end{thm}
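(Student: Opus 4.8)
The plan is to follow the proof of Theorem \ref{productcoord} almost verbatim, with the logarithmic volume identity of \cite[Corollary 4.2]{KS} replaced by \cite[Corollary 4.3]{KS} and, in the inhomogeneous case, by a direct slice computation. First I would reduce to the case $\lim_{t\to\infty}\psi(t)=0$: if $\psi$ has a positive limit, then since $p\le n-1$ forces $n-p-1\ge 0$, one may pick a regular, nonincreasing $\varphi\le\psi$ with $\lim_{t\to\infty}\varphi(t)=0$ (for instance $\varphi(t):=\min\{\psi(0),\,1/\log(t+2)\}$) for which the integral in (i) still diverges and the series in (ii) still converges. Since $\varphi\le\psi$ gives $A_{{_{\xi}}f\circ g,\varphi,\nu}\subseteq A_{{_{\xi}}f\circ g,\psi,\nu}$ and $B_{{_{\xi}}f\circ g,\varphi(T),\nu,T}\subseteq B_{{_{\xi}}f\circ g,\psi(T),\nu,T}$, the conclusions of (i) and (ii) transfer from $\varphi$ to $\psi$. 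The case $\xi=0$ is then exactly the content of \cite[Corollary 4.3]{KS}: here $f$ is subhomogeneous with ${\delta}=\min_i z_i$, and after the elementary observation that $A_{f,\psi,\eta}$ decomposes along $\R^p\times\R^{n-p}$ with the $\R^p$-slice having measure $\asymp\prod_{i=1}^p\psi(\cdot)^{1/z_i}=\psi(\cdot)^{z}$, the integral and summatory criteria are the stated ones.

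For the main case $\xi>0$, I would take $\eta$ to be the maximum norm and write $\R^n=\R^p\times\R^{n-p}$, noting that $f$ depends only on the first $p$ coordinates: $f(\xx)=\max_{1\le i\le p}|x_i|^{z_i}=:\phi(x_1,\dots,x_p)$. The key computation is the volume of the $\R^p$-slice: for $0\le\e<\xi$ the set $\{(x_1,\dots,x_p):|\phi-\xi|\le\e\}$ is a box minus a box, so its $p$-dimensional Lebesgue measure is
\[
V(\e)=2^p\Bigl[\prod_{i=1}^p(\xi+\e)^{1/z_i}-\prod_{i=1}^p(\xi-\e)^{1/z_i}\Bigr].
\]
A first-order Taylor expansion at $\e=0$ yields $V(\e)\sim 2^{p+1}z\,\xi^{z-1}\e$, hence $V(\e)\asymp_{\xi,\mathbf{z},p}\e$ as $\e\to 0^{+}$. (This is precisely where $\xi>0$ enters: at $\xi=0$ one instead gets $V(\e)=2^p\e^{z}$, which is why a factor $\psi^{z}$ appears in the $\xi=0$ clauses.) One also notes that the conditions $|\phi-\xi|\le\e$ and, more generally, $\xx\in A_{{_{\xi}}f,s\psi,\eta}$ force $(x_1,\dots,x_p)$ into a bounded box, so that for $T$ (respectively, the $\eta$-radius $R$) large the $\eta$-ball constraint reduces to a constraint on the last $n-p$ coordinates alone; Fubini together with the fact that the $\eta$-sphere of radius $R$ in $\R^{n-p}$ carries shell measure $\asymp_{n,p}R^{n-p-1}\,\dint R$ then controls all the relevant volumes.

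With these ingredients, the assembly mirrors Theorem \ref{productcoord}. For (i): for any $s\in\R_{>0}$ one gets
\[
m\bigl(A_{{_{\xi}}f,s\psi,\eta}\bigr)<\infty
\iff \int_1^{\infty}V\!\bigl(s\psi(R)\bigr)R^{n-p-1}\,\dint R<\infty
\iff \int_1^{\infty}\psi(t)\,t^{n-(p+1)}\,\dint t<\infty,
\]
independently of $s$; Lemma \ref{scalingonly}(i) then shows $({_{\xi}}f,\psi)$ is asymptotically acceptable, and Theorem \ref{asymptoticmain} gives (i). For (ii): for any $s_1,s_2\in\R_{>0}$ one gets, for all sufficiently large $T$, $m\bigl(B_{{_{\xi}}f,s_1\psi(T),\eta,s_2T}\bigr)=V\!\bigl(s_1\psi(T)\bigr)\,(2s_2T)^{n-p}\asymp_{\xi,\mathbf{z},p,n,s_2}s_1\,\psi(T)\,T^{n-p}$, so the $\liminf$ and $\limsup$ in \equ{unifproperty} (taken with $\nu=\eta$) lie in $(0,\infty)$; Lemma \ref{scalingonly}(ii) yields uniform acceptability, the equivalence
\[
\inf_{N\in\N}\sum_{k=N}^{\infty}m\bigl(B_{{_{\xi}}f,\psi(2^k),\eta,2^k}\bigr)^{1-r}<\infty
\iff \sum_{k=1}^{\infty}\bigl[2^{(n-p)k}\psi(2^k)\bigr]^{1-r}<\infty
\]
holds (the $\inf_{N}$ absorbing the finitely many terms with small volume), and Theorem \ref{mainuniformresult} gives (ii). The only genuinely new work beyond \cite[Corollary 4.3]{KS} is the slice-volume asymptotic $V(\e)\asymp\e$ via Taylor expansion and the bookkeeping needed to handle the non-constant argument $\eta(\xx)$ of $\psi$ in the set $A_{{_{\xi}}f,s\psi,\eta}$; I expect this to be the main — though entirely elementary — obstacle.
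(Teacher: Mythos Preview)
Your proposal is correct and follows the paper's approach exactly: the paper in fact omits the proof, saying only that the $\xi=0$ case is \cite[Corollary 4.3]{KS} and that the $\xi>0$ case is ``similar to, and simpler than'' that of Theorem \ref{productcoord} via Lemma \ref{lemmakhintchine}, and your box-minus-box slice computation $V(\e)=2^p\bigl[(\xi+\e)^{z}-(\xi-\e)^{z}\bigr]\asymp_{\xi,z}\e$ is precisely what that lemma yields after applying it to $\overline{\psi}=\xi+s\psi$ and $\underline{\psi}=\xi-s\psi$ and subtracting. One minor slip: your explicit choice $\varphi(t)=\min\{\psi(0),1/\log(t+2)\}$ need not satisfy $\varphi\le\psi$ for small $t$, but replacing $\psi(0)$ by any constant below $\lim_{t\to\infty}\psi(t)$ (or simply observing that only eventual comparison is needed for the transfer) fixes this immediately.
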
 

\noindent  The proof of this theorem makes use of the following lemma.

\begin{lem}[{\cite[Corollary 4.3(i)]{KS}}]\label{lemmakhintchine} Let $p \in \{1, \dots , n - 1 \}$ and $\mathbf{z} = (z_1, \dots , z_p) \in \left(\R_{>0}\right)^p$ be given. Let $f: \R^n \to \R$ be given by \[f(x_1, \dots , x_n) := \max\left\lbrace |x_i|^{z_i}: 1 \leq i \leq p\right\rbrace.\] Set $z := \sum_{i=1}^p  \left(z_i\right)^{-1}.$ Let ${\overline{\psi}} : \R_{\geq 0} \to \R_{>0}$ be bounded and Borel measurable. Let $\eta$ denote the maximum norm on $\R^n.$ Then there exists some $R = R\left({\overline{\psi}}, \mathbf{z} \right) \in \R_{\geq 1}$ such that for every Borel measurable function $\psi : \R_{\geq 0} \to \R_{>0}$ with $\psi \leq {\overline{\psi}}$ on $\R_{\geq 0}$ and any real numbers $S$ and $T$ with $R \leq S \leq T,$ we have \begin{equation*}\label{lemmaresult}
m\left(A_{f, \psi, \eta} \cap \{\xx \in \R^n : {S} \leq\eta(\xx) \leq {T} \} \right) = 2^n (n-p) \int_{S}^{T} \psi({t})^z \, t^{n-(p+1)} \, \dint t. 
\end{equation*}
\end{lem}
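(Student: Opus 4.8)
The plan is to reduce the claimed identity to a single Tonelli computation, using that above a suitable radius the maximum norm $\eta$ on $\R^n$ is determined entirely by the last $n-p$ coordinates.

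First, since $f\ge 0$ everywhere, for $\xx\in\R^n$ the membership $\xx\in A_{f,\psi,\eta}$ is equivalent to the system of inequalities $|x_i|\le\psi\big(\eta(\xx)\big)^{1/z_i}$ for $1\le i\le p$. Write $\xx=(\yy,\ww)$ with $\yy=(x_1,\dots,x_p)\in\R^p$ and $\ww=(x_{p+1},\dots,x_n)\in\R^{n-p}$ (note $n-p\ge 1$), and let $\|\cdot\|_\infty$ denote the maximum norm on each factor, so $\eta(\xx)=\max\{\|\yy\|_\infty,\|\ww\|_\infty\}$. I would set
\[
R=R(\overline{\psi},\mathbf{z}):=1+\max_{1\le i\le p}\Big(\,\sup_{t\ge 0}\overline{\psi}(t)\Big)^{1/z_i},
\]
which is finite and depends only on $\overline{\psi}$ and $\mathbf{z}$ because $\overline{\psi}$ is bounded. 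The crucial observation is that whenever $S\ge R$ one has the \emph{exact} set identity
\[
A_{f,\psi,\eta}\cap\{\xx:S\le\eta(\xx)\le T\}=\big\{(\yy,\ww):\ S\le\|\ww\|_\infty\le T\ \text{and}\ |x_i|\le\psi\big(\|\ww\|_\infty\big)^{1/z_i}\ \text{for}\ 1\le i\le p\big\}.
\]
Indeed, if $\xx=(\yy,\ww)$ lies in the left-hand set, then for each $i\le p$ one has $|x_i|\le\psi\big(\eta(\xx)\big)^{1/z_i}\le\overline{\psi}\big(\eta(\xx)\big)^{1/z_i}<R\le S\le\eta(\xx)$, so $\|\yy\|_\infty<\eta(\xx)$ and hence $\|\ww\|_\infty=\eta(\xx)\in[S,T]$, giving the right-hand conditions; the converse inclusion follows by running the same estimate backwards.

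Next, applying Tonelli's theorem and integrating in $\yy$ first, for each fixed $\ww$ with $S\le\|\ww\|_\infty\le T$ the admissible $\yy$-slice has measure $\prod_{i=1}^p 2\psi\big(\|\ww\|_\infty\big)^{1/z_i}=2^p\,\psi\big(\|\ww\|_\infty\big)^{z}$, where $z=\sum_{i=1}^p z_i^{-1}$. Thus the measure in question equals $2^p\int_{\{S\le\|\ww\|_\infty\le T\}}\psi\big(\|\ww\|_\infty\big)^{z}\,\dint\ww$. To evaluate this I would push forward Lebesgue measure on $\R^{n-p}$ under $\ww\mapsto\|\ww\|_\infty$; since $m\big(\{\ww\in\R^{n-p}:\|\ww\|_\infty\le t\}\big)=(2t)^{n-p}$ for $t\ge 0$, this pushforward has density $t\mapsto 2^{n-p}(n-p)\,t^{n-p-1}$, whence
\[
2^p\!\int_{\{S\le\|\ww\|_\infty\le T\}}\!\psi\big(\|\ww\|_\infty\big)^{z}\,\dint\ww=2^p\!\int_S^T\psi(t)^{z}\cdot 2^{n-p}(n-p)\,t^{n-p-1}\,\dint t=2^n(n-p)\!\int_S^T\psi(t)^{z}\,t^{n-(p+1)}\,\dint t,
\]
which is precisely the asserted formula.

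There is no real obstacle here; the only point requiring care is that $R$ must be chosen \emph{uniformly in} $\psi$ — depending only on the envelope $\overline{\psi}$ and on $\mathbf{z}$ — which is exactly what lets the conclusion hold for every Borel $\psi\le\overline{\psi}$ with no monotonicity assumption, as in the remark following Lemma \ref{lemmaprodcoord}. One should also note that the displayed set identity is an honest equality of sets (not merely modulo a null set), thanks to the strict inequality $\|\yy\|_\infty<\|\ww\|_\infty$ on the relevant region; after that it is the routine Tonelli-plus-pushforward computation above, entirely parallel to the proof of Lemma \ref{lemmaprodcoord} (i.e.\ \cite[Corollary 4.2(i)]{KS}), with the role of the cube ``annulus'' $\{S\le\|\ww\|_\infty\le T\}$ in $\R^{n-p}$ there played by a product of intervals.
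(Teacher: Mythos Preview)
Your proof is correct and matches the paper's approach: the paper does not give a self-contained proof of this lemma but simply cites \cite[Corollary 4.3(i)]{KS} and remarks that inspecting that proof yields the choice
\[
R = R\left(\overline{\psi},\mathbf{z}\right) := 1 + \max_{1\le i\le p}\Big(\sup_{t\ge 0}\overline{\psi}(t)\Big)^{1/z_i},
\]
which is precisely the $R$ you chose. Your write-up thus spells out in full the direct computation (the set identity reducing $\eta(\xx)$ to $\|\ww\|_\infty$ once the radius exceeds $R$, followed by Tonelli and the pushforward of Lebesgue measure under the $\ell^\infty$ norm) that the paper leaves to the cited reference.
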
   

\noindent  The relationship between Lemma \ref{lemmakhintchine} and \cite[Corollary 4.3(i)]{KS} is analogous to that between Lemma \ref{lemmaprodcoord} and \cite[Corollary 4.2(i)]{KS}, and similar remarks to those made earlier apply here. In particular, inspecting the proof of \cite[Corollary 4.3(i)]{KS} shows that one may choose \[  R = R\left({\overline{\psi}}, \mathbf{z} \right) := 1 + \max_{1 \leq i \leq p}  \left(\sup\left\lbrace {\overline{\psi}}(t) : t \in \R_{\geq 0 }\right\rbrace\right)^{1/{z_i}} \] in Lemma \ref{lemmakhintchine}. The $\xi = 0$ case of  Theorem \ref{khintchine} is already known:~see \cite[Corollary 4.3]{KS}. The proof of the $\xi > 0$ case of this theorem is similar to, and simpler than, that of the $\xi > 0$ case of Theorem \ref{productcoord}:~it is therefore omitted.  

\smallskip   

\begin{rmk}\label{follow} \rm 
{Let us mention that all the results in \S \ref{intro} (in particular, Theorems \ref{simplesubmerapprox}, \ref{simpleproduct}, and  \ref{simplemax} and Remarks \ref{primasl} and \ref{singularprimasl}) follow from those here in \S \ref{acc} and the fact that   $G = \operatorname{SL}_n(\R)$ and $G = \operatorname{ASL}_n(\R)$ satisfy various forms of the Siegel-type and Rogers-type axioms:~see \cite[Theorems 2.5, 2.6, and 2.8]{KS} and the references therein for details. }  \end{rmk}

\ignore{ \begin{proof}[Proof of Theorem \ref{khintchine}] Note that \cite[Corollary 4.3]{KS} implies both the $\xi = 0$ case of (i) and that of (ii).  

\medskip 

Suppose now that $\xi > 0.$ Arguing as in the proof of Theorem \ref{productcoord}, we assume without loss of generality that $\ds \lim_{t \to \infty} \psi(t) = 0.$ Define ${\overline{\psi}} : \R_{\geq 0} \to \R_{>0}$ to be the constant function equal to $\ds 2 \xi + \sup\left\lbrace \psi(t) : t \in \R_{\geq 0}\right\rbrace$. Let $R = R\left({\overline{\psi}}, \mathbf{z} \right)$ be as in Lemma \ref{lemmakhintchine}. Let $a = a(\psi) \in \R_{>1}$ and $b = b(\psi) \in \R_{>0}$ be as in Definition \ref{basicapdefns}. Fix $i \in \N$ for which $a^i > 3.$ Let $c \in \R_{>0}$ be arbitrary. Fix $M \in \R_{> R}$ for which $\ds \psi(M) < \frac{\xi}{4} \cdot \min \left\lbrace c, c^{-1} \right\rbrace$.

If $\ds m\left(A_{ _{  \xi} f, c\psi, \nu}\right) < \infty,$ then $\ds \sum_{k=1}^{\infty} m\left(\{ \xx \in A_{ _{  \xi} f, c\psi({k+1}), \nu} : k \leq \nu(\xx) \leq {k+1} \}\right) < \infty$ because the function $c\psi$ is nonincreasing. For each $k \in \Z_{> M}$, we then have  \begin{align*} m\left(\{ \xx \in A_{ _{  \xi} f, c\psi({k+1}), \nu} : k \leq \nu(\xx) \leq {k+1} \}\right) &\asymp_{n, p, \nu, \xi, z} \, c\psi(k+1) \int_{k}^{k+1} t^{n-(p+1)} \, \dint t \\
&\geq c \, b^i \psi\left(a^{-i}(k+1)\right)\int_{k}^{k+1} t^{n-(p+1)} \, \dint t \\
&\geq c \, b^i \int_{k}^{k+1} \psi(t) \, t^{n-(p+1)} \, \dint t.  \end{align*} It now follows that if $\ds m\left(A_{ _{  \xi} f, c\psi, \nu}\right) < \infty,$ then $\ds \int_1^{\infty} \psi(t) \, t^{n-(p+1)} \, \dint t < \infty.$

\smallskip

If $\ds m\left(A_{ _{  \xi} f, c\psi, \nu}\right) = \infty,$ then $\ds \sum_{k=1}^{\infty} m\left(\{ \xx \in A_{ _{  \xi} f, c\psi(k), \nu} : k \leq \nu(\xx) \leq {k+1} \}\right) = \infty.$ For each $k \in \Z_{> M},$ we then have \begin{align*} m\left(\{ \xx \in A_{ _{  \xi} f, c\psi({k}), \nu} : k \leq \nu(\xx) \leq {k+1} \}\right) &\asymp_{n, p, \nu, \xi, z} \, c\psi(k) \int_{k}^{k+1} t^{n-(p+1)} \, \dint t \\
&\leq c \, b^{-i} \psi\left(a^i k\right) \int_{k}^{k+1} t^{n-(p+1)} \, \dint t \\
&\leq c \, b^{-i} \int_{k}^{k+1} \psi(t) \, t^{n-(p+1)} \, \dint t.  
\end{align*} It now follows that if $\ds m\left(A_{ _{  \xi} f, c\psi, \nu}\right) = \infty,$ then $\ds \int_1^{\infty} \psi(t) \, t^{n-(p+1)} \, \dint t = \infty.$

Hence, \[  m\left(A_{ _{  \xi} f, c\psi, \nu}\right) < \infty \quad \quad \quad \text{if and only if} \quad \quad \quad \int_1^{\infty} \psi(t) \, t^{n-(p+1)} \, \dint t < \infty. \] Since $c \in \R_{>0}$ is arbitrary and $\nu$ is an arbitrary norm on $\R^n,$ we thus infer that the pair $\left({_{ \xi} f}, \psi\right)$ is asymptotically acceptable. Statement (i) now follows from the preceding work and Theorem \ref{asymptoticmain}.    

\vspace{0.1in}

The proof of (ii) is very similar to that of (i) and previous proofs and is therefore omitted.  \end{proof} }

\begin{rmk} \rm
We note here that one may easily deduce analogues of 
{the statements made in Remark \ref{examples}}
for Theorems \ref{productcoord}, 
\ref{khintchine}, and for the $\ell = 1$ case of Theorem \ref{submerapprox}. We also note that the $\ell = 1$ case of Theorem \ref{submerapprox} applies to the functions discussed in {Remark \ref{examples}}. \end{rmk} 

\begin{rmk} \rm
In light of the preceding results, we note that the discussion in Remark \ref{simplezerovsnonzero} applies in the current, more general, setting.    
\end{rmk}

\subsection*{Acknowledgements}
The co-author Skenderi would like to thank Jayadev Athreya and Jon Chaika for various discussions. The authors would also like to thank the anonymous referee for a detailed report whose suggestions improved the exposition in this paper.

\begin{bibdiv}   
\begin{biblist}

\bib{Pol}{article}{
   author={Athreya, J.\,S.},
   author={Margulis, G.\,A.},
   title={Values of random polynomials at integer points},
   journal={J. Mod. Dyn.},
   volume={12},
   date={2018},
   pages={9--16},
   issn={1930-5311},
   review={\MR{3808207}},
   doi={10.3934/jmd.2018002},
}

\bib{Anishnew}{article}{
   author={Bandi, P.},
   author={Ghosh, A.},
   author={Han, J.},
   title={A generic effective Oppenheim theorem for systems of forms},
   journal={J. Number Theory},
   volume={218},
   date={2021},
   pages={311--333},
   issn={0022-314X},
   review={\MR{4157702}},
   doi={10.1016/j.jnt.2020.07.002},
}

\bib{BG}{article}{
   author={Bandi, P.},
   author={Ghosh, A.},
   title={On the density at integer points of a system comprising an
   inhomogeneous quadratic form and a linear form},
   journal={Math. Z.},
   volume={299},
   date={2021},
   number={1-2},
   pages={781--796},
   issn={0025-5874},
   review={\MR{4311618}},
   doi={10.1007/s00209-021-02716-8},
}

\bib{Boo}{book}{
   author={Boothby, W.\,M.},
   title={An introduction to differentiable manifolds and Riemannian
   geometry},
   series={Pure and Applied Mathematics},
   volume={120},
   edition={2},
   publisher={Academic Press, Inc., Orlando, FL},
   date={1986},
   pages={xvi+430},
   isbn={0-12-116052-1},
   isbn={0-12-116053-X},
   review={\MR{861409}},
}

\bib{Bourgain}{article}{
   author={Bourgain, J.},
   title={A quantitative Oppenheim theorem for generic diagonal quadratic
   forms},
   journal={Israel J. Math.},
   volume={215},
   date={2016},
   number={1},
   pages={503--512},
   issn={0021-2172},
   review={\MR{3551907}},
   doi={10.1007/s11856-016-1385-7},
}  

\bib{Dani}{article}{
   author={Dani, S.\,G.},
   title={Simultaneous Diophantine approximation with quadratic and linear
   forms},
   journal={J. Mod. Dyn.},
   volume={2},
   date={2008},
   number={1},
   pages={129--138},
   issn={1930-5311},
   review={\MR{2366232}},
   doi={10.3934/jmd.2008.2.129},
}

\bib{EMM}{article}{
   author={Eskin, A.},
   author={Margulis, G.\,A.},
   author={Mozes, S.},
   title={Quadratic forms of signature $(2,2)$ and eigenvalue spacings on
   rectangular $2$-tori},
   journal={Ann. of Math. (2)},
   volume={161},
   date={2005},
   number={2},
   pages={679--725},
   issn={0003-486X},
   review={\MR{2153398}},
   doi={10.4007/annals.2005.161.679},
}

\bib{GhoshGorodnikNevo}{article}{
   author={Ghosh, A.},
   author={Gorodnik, A.},
   author={Nevo, A.},
   title={Optimal density for values of generic polynomial maps},
   journal={Amer. J. Math.},
   volume={142},
   date={2020},
   number={6},
   pages={1945--1979},
   issn={0002-9327},
   review={\MR{4176550}},
   doi={10.1353/ajm.2020.0049},
}

\bib{GhoshKelmer}{article}{
   author={Ghosh, A.},
   author={Kelmer, D.},
   title={A quantitative Oppenheim theorem for generic ternary quadratic
   forms},
   journal={J. Mod. Dyn.},
   volume={12},
   date={2018},
   pages={1--8},
   issn={1930-5311},
   review={\MR{3808206}},
   doi={10.3934/jmd.2018001},
}

\bib{GKY_a}{article}{
   author={Ghosh, A.},
   author={Kelmer, D.},
   author={Yu, S.},
   title={Effective Density for Inhomogeneous Quadratic Forms I: Generic Forms and Fixed Shifts},
   journal={Int. Math. Res. Not. IMRN},
   date={2020},
   doi={10.1093/imrn/rnaa206},
}  

\bib{GKY_b}{article}{
   author={Ghosh, A.},
   author={Kelmer, D.},
   author={Yu, S.},
   title={Effective density for inhomogeneous quadratic forms II: fixed forms and generic shifts},
   date={2020},
   eprint={arXiv:2001.10990 [math.NT]}, 
   publisher={arXiv},
  doi={10.48550/ARXIV.2001.10990},
}

\bib{Gor}{article}{
   author={Gorodnik, A.},
   title={Oppenheim conjecture for pairs consisting of a linear form and a
   quadratic form},
   journal={Trans. Amer. Math. Soc.},
   volume={356},
   date={2004},
   number={11},
   pages={4447--4463},
   issn={0002-9947},
   review={\MR{2067128}},
   doi={10.1090/S0002-9947-04-03473-7},
}

\bib{KY2}{article}{
   author={Kelmer, D.},
   author={Yu, S.},
   title={Values of random polynomials in shrinking targets},
   journal={Trans. Amer. Math. Soc.},
   volume={373},
   date={2020},
   number={12},
   pages={8677--8695},
   issn={0002-9947},
   review={\MR{4177272}},
   doi={10.1090/tran/8204},
}

\bib{KS}{article}{
   author={Kleinbock, D.},
   author={Skenderi, M.},
   title={Khintchine-type theorems for values of subhomogeneous functions at integer points},
   journal={Monatsh. Math.},
   volume={194},
   date={2021},
   pages={523-554},
   issn={0026-9255},
   doi={10.1007/s00605-020-01498-1},
}

\bib{LM}{article}{
   author={Lindenstrauss, E.},
   author={Margulis, G.\,A.},
   title={Effective estimates on indefinite ternary forms},
   journal={Israel J. Math.},
   volume={203},
   date={2014},
   number={1},
   pages={445--499},
   issn={0021-2172},
   review={\MR{3273448}},
   doi={10.1007/s11856-014-1110-3},
}

\bib{Mar89}{article}{
   author={Margulis, G.\,A.},
   title={Discrete subgroups and ergodic theory},
   conference={
      title={Number theory, trace formulas and discrete groups},
      address={Oslo},
      date={1987},
   },
   book={
      publisher={Academic Press, Boston, MA},
   },
   date={1989},
   pages={377--398},
   review={\MR{993328}}, 
   }

\bib{Mar97}{article}{
   author={Margulis, G.\,A.},
   title={Oppenheim conjecture},
   conference={
      title={Fields Medallists' lectures},
   },
   book={
      series={World Sci. Ser. 20th Century Math.},
      volume={5},
      publisher={World Sci. Publ., River Edge, NJ},
   },
   date={1997},
   pages={272--327},
   review={\MR{1622909}},
   doi={10.1142/9789812385215\textunderscore0035}, 
   }

\bib{MM}{article}{
   author={Margulis, G.\,A.},
   author={Mohammadi, A.},
   title={Quantitative version of the Oppenheim conjecture for inhomogeneous
   quadratic forms},
   journal={Duke Math. J.},
   volume={158},
   date={2011},
   number={1},
   pages={121--160},
   issn={0012-7094},
   review={\MR{2794370}},
   doi={10.1215/00127094-1276319},
}

\bib{MeanRog}{article}{
   author={Rogers, C.\,A.},
   title={Mean values over the space of lattices},
   journal={Acta Math.},
   volume={94},
   date={1955},
   pages={249--287},
   issn={0001-5962},
   review={\MR{75243}},
   doi={10.1007/BF02392493},
}

\bib{RogSet}{article}{
   author={Rogers, C.\,A.},
   title={The number of lattice points in a set},
   journal={Proc. London Math. Soc. (3)},
   volume={6},
   date={1956},
   pages={305--320},
   issn={0024-6115},
   review={\MR{79045}},
   doi={10.1112/plms/s3-6.2.305},
}

\bib{Schmidt}{article}{
   author={Schmidt, W.\,M.},
   title={A metrical theorem in geometry of numbers},
   journal={Trans. Amer. Math. Soc.},
   volume={95},
   date={1960},
   pages={516--529},
   issn={0002-9947},
   review={\MR{117222}},
   doi={10.2307/1993571},
}

\bib{Siegel}{article}{
   author={Siegel, C.\,L.},
   title={A mean value theorem in geometry of numbers},
   journal={Ann. of Math. (2)},
   volume={46},
   date={1945},
   pages={340--347},
   issn={0003-486X},
   review={\MR{12093}},
   doi={10.2307/1969027},
}

\end{biblist}   
\end{bibdiv}

\newpage

\end{document}